\documentclass[10pt,a4paper]{amsart}
\usepackage{fullpage} 
\usepackage{amsmath}
\usepackage{amsthm}
\usepackage{amssymb}
\usepackage{mathtools}
\usepackage[only,mapsfrom]{stmaryrd}
\usepackage{graphicx}
\usepackage{cancel} 
\usepackage{color} 
\usepackage[all]{xy} 
\usepackage{tikz}
\usetikzlibrary{arrows,calc}
\usetikzlibrary{cd}
\usepackage{nicefrac}
\usepackage{enumitem}
\usepackage[colorlinks,citecolor=blue,linkcolor=blue,urlcolor=blue,filecolor=blue,breaklinks]{hyperref}
\usepackage{caption}
\usepackage{pifont}

\setcounter{tocdepth}{1}
\numberwithin{equation}{section}
\numberwithin{figure}{section}

\newtheorem{thm}{Theorem}[section]
\newtheorem{athm}{Theorem}

\newtheorem{acor}[athm]{Corollary}
\newtheorem{lem}[thm]{Lemma}
\newtheorem{prop}[thm]{Proposition}
\newtheorem{cor}[thm]{Corollary}

\newtheorem*{thm*}{Theorem}
\newtheorem*{conj*}{Conjecture}
\newtheorem*{cor*}{Corollary}
\newtheorem*{ques*}{Question}
\newtheorem*{claim*}{Claim}

\theoremstyle{definition}
\newtheorem{rem}[thm]{Remark}
\newtheorem{defn}[thm]{Definition}
\newtheorem{construction}[thm]{Construction}
\newtheorem{ex}[thm]{Example}
\newtheorem{notation}[thm]{Notation}
\newtheorem*{rem*}{Remark}

% mathbb and mathcal %

\newcommand{\cC}{\mathcal{C}}

\newcommand{\cP}{\mathcal{P}}

\newcommand{\cZ}{\mathcal{Z}}

\newcommand{\bC}{\mathbb{C}}
\newcommand{\bD}{\mathbb{D}}

\newcommand{\bN}{\mathbb{N}}

\newcommand{\bP}{\mathbb{P}}
\newcommand{\bQ}{\mathbb{Q}}
\newcommand{\bR}{\mathbb{R}}
\newcommand{\bS}{\mathbb{S}}

\newcommand{\bZ}{\mathbb{Z}}
\newcommand{\fC}{\mathfrak{C}}
\newcommand{\fF}{\mathfrak{F}}

\newcommand{\cmark}{\ding{51}}
\newcommand{\xmark}{\ding{55}}
\newcommand{\qmark}{\textbf{?}}

\newcommand{\Map}{\mathrm{Map}}
\newcommand{\PMap}{\mathrm{PMap}}
\newcommand{\Homeo}{\mathrm{Homeo}}
\newcommand{\Diff}{\mathrm{Diff}}
\newcommand{\Gr}{\mathrm{Gr}}
\newcommand{\Str}{\mathrm{Str}}
\newcommand{\Bij}{\mathrm{Bij}}
\newcommand{\supp}{\mathrm{supp}}
\newcommand{\image}{\mathrm{image}}
\newcommand{\Ends}{{\mathrm{Ends}}}
\newcommand{\longhookrightarrow}{\ensuremath{\lhook\joinrel\longrightarrow}}

\newcommand{\longtwoheadleftarrow}{\ensuremath{\twoheadleftarrow\joinrel\relbar}}
\newcommand{\incl}[3][right]%
{%
\draw[<-,>=#1 hook] #2 to ($ #2!0.5!#3 $);
\draw[->] ($ #2!0.5!#3 $) to #3;%
}
\newcommand{\inclusion}[5][right]%
{%
\draw[<-,>=#1 hook] #4 to ($ #4!0.5!#5 $) node[#2,font=\small]{#3};
\draw[->,>=stealth'] ($ #4!0.5!#5 $) to #5;%
}

\newcommand{\specialcell}[2][c]{%
  \begin{tabular}[#1]{@{}c@{}}#2\end{tabular}}

\renewcommand{\geq}{\geqslant}
\renewcommand{\leq}{\leqslant}

\title{Compact and finite-type support in the homology of big mapping class groups}

\author{Martin Palmer}
\address{Institutul de Matematică Simion Stoilow al Academiei Române, 21 Calea Griviței, 010702 Bucharest, Romania; School of Mathematics, University of Leeds, Leeds, LS2 9JT, UK}
\email{mpanghel@imar.ro; m.d.palmer-anghel@leeds.ac.uk}

\author{Xiaolei Wu}
\address{Shanghai Center for Mathematical Sciences, Jiangwan Campus, Fudan University, No.2005 Songhu Road, Shanghai, 200438, P.R. China}
\email{xiaoleiwu@fudan.edu.cn}

\subjclass[2020]{57K20, 20J06}
\keywords{Big mapping class groups, group homology, compactly-supported homology classes}
\date{17 July 2025}

\begin{document}

\begin{abstract}
For any infinite-type surface $S$, a natural question is whether the homology of its mapping class group contains any non-trivial classes that are supported on (i) a \emph{compact} subsurface or (ii) a \emph{finite-type} subsurface. Our purpose here is to study this question, in particular giving an almost-complete answer when the genus of $S$ is positive (including infinite) and a partial answer when the genus of $S$ is zero. Our methods involve the notion of \emph{shiftable subsurfaces} as well as homological stability for mapping class groups of finite-type surfaces.
\end{abstract}
\maketitle

\section*{Introduction}

In their seminal work \cite{MW07}, Madsen and Weiss calculated the stable
homology of the mapping class groups of compact, connected, orientable surfaces, in particular confirming the Mumford conjecture \cite{Mumford1983}. Let $L$ denote the \emph{Loch Ness monster surface}, the unique infinite-genus surface with one end and no boundary, and write $\Map_c(L)$ for the subgroup of the mapping class group $\Map(L) = \pi_0(\Homeo(L))$ of elements admitting compactly-supported representatives. Rationally, the Madsen--Weiss theorem has the following consequence.

\begin{thm*}[\cite{MW07}]
$H^\ast (\Map_c(L);\bQ) \cong \bQ[\kappa_1,\kappa_2,\ldots]$, where $\kappa_i$ is the Miller--Morita--Mumford class of degree $2i$.
\end{thm*}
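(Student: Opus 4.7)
The plan is to reduce the statement to the classical Madsen--Weiss theorem for compact surfaces via a colimit argument, exploiting the fact that every compactly-supported homeomorphism of $L$ is already supported on a finite-type subsurface.

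First, since $L$ has a single end, one can choose an exhaustion $\Sigma_{1,1} \subset \Sigma_{2,1} \subset \Sigma_{3,1} \subset \cdots$ of $L$ by compact connected subsurfaces with $\Sigma_{g,1}$ of genus $g$ with one boundary component, such that $\bigcup_g \Sigma_{g,1} = L$ and each complement $L \smallsetminus \mathrm{int}(\Sigma_{g,1})$ is connected. Any compactly-supported homeomorphism of $L$ is, up to isotopy, supported in some $\Sigma_{g,1}$ and restricts there to a homeomorphism fixing $\partial\Sigma_{g,1}$ pointwise; extension-by-identity gives a natural homomorphism $\Map(\Sigma_{g,1}) \to \Map_c(L)$, where $\Map(\Sigma_{g,1})$ denotes the mapping class group of $\Sigma_{g,1}$ rel boundary. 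The stabilisation maps $\Map(\Sigma_{g,1}) \to \Map(\Sigma_{g+1,1})$ induced by the inclusions are compatible, and a standard Alexander-trick / isotopy-extension argument identifies
\[
\Map_c(L) \;\cong\; \varinjlim_{g \to \infty} \Map(\Sigma_{g,1}).
\]

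Since group homology commutes with filtered colimits of groups, this yields
\[
H_\ast(\Map_c(L); \bQ) \;\cong\; \varinjlim_{g \to \infty} H_\ast(\Map(\Sigma_{g,1}); \bQ).
\]
By Harer's homological stability theorem, for each fixed degree $k$ the map $H_k(\Map(\Sigma_{g,1}); \bQ) \to H_k(\Map(\Sigma_{g+1,1}); \bQ)$ is an isomorphism once $g$ is large enough relative to $k$; hence the colimit in each degree is finite-dimensional and agrees with the \emph{stable} rational homology of $\Map(\Sigma_{g,1})$. The Madsen--Weiss theorem identifies this stable homology with the homology of (the basepoint component of) the infinite loop space $\Omega^\infty_0 \mathbf{MT}SO(2)$, whose rational cohomology is freely generated as a graded commutative algebra by the MMM classes $\kappa_1, \kappa_2, \ldots$. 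Dualising degreewise (legitimate because each graded piece is finite-dimensional over $\bQ$) yields $H^\ast(\Map_c(L); \bQ) \cong \bQ[\kappa_1, \kappa_2, \ldots]$.

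The only genuinely delicate point is the identification $\Map_c(L) \cong \varinjlim_g \Map(\Sigma_{g,1})$: one must verify both that the stabilisation maps are injective in the colimit (which follows from the fact that if a compactly-supported mapping class is trivial after extending to a larger subsurface, then a fragmentation / isotopy-extension argument shows it was already trivial on the smaller one) and that every element of $\Map_c(L)$ is represented at some finite stage. Everything else is a direct appeal to Harer's stability and Madsen--Weiss, used here purely as a black box.
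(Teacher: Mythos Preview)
Your argument is correct and is precisely the standard deduction of this statement from the Madsen--Weiss theorem. The paper does not give its own proof of this result: it is stated in the introduction simply as a rational consequence of \cite{MW07} and used as motivation, so there is no alternative argument to compare against.

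Two minor comments on the ``delicate point'' you flag. First, the injectivity of the transition maps $\Map(\Sigma_{g,1}) \to \Map(\Sigma_{g+1,1})$ (and of $\Map(\Sigma_{g,1}) \to \Map(L)$) is not really a fragmentation or isotopy-extension argument; it follows from the standard kernel computation for subsurface inclusions (cf.~\cite[Theorem~3.18]{FaMa11}), since the complementary pieces are neither discs nor once-punctured discs. The paper in fact records exactly this, in the infinite-type generality, as Proposition~\ref{prop:injectivity} and Lemma~\ref{lem:colimits}, which together give $\Map_c(L) \cong \Map_\fC(L) = \mathrm{colim}_{\Sigma \in \fC(L)} \Map(\Sigma)$; your cofinal family $\{\Sigma_{g,1}\}$ then finishes the identification. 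Second, the dualisation step is unproblematic for the reason you state: degreewise finite-dimensionality of the stable rational homology makes the universal coefficient theorem give $H^*(\Map_c(L);\bQ)$ as the graded dual, and the ring structure is inherited from the identification with $H^*(\Omega^\infty_0\mathbf{MT}SO(2);\bQ)$.
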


Recently, much progress has been made towards calculating the homology of mapping class groups of infinite-type surfaces \cite{APV20,Domat2022,PalmerWu2,PalmerWu2024, MalesteinTao2024}. In particular, for the Loch Ness monster surface $L$, the authors showed in \cite[Proposition~5.3]{PalmerWu2} that $H_{\ast}(\Map(L);\bZ)$ is uncountable in every positive degree. The proof is constructive, but the (uncountably many) homology classes constructed do not have compact support. It is therefore natural to wonder whether $H_{\ast}(\Map(L);\bZ)$ contains \emph{any} (non-zero) classes with compact support, in other words, whether the map $H_\ast (\Map_c(L);\bZ) \to H_\ast (\Map(L);\bZ)$ induced by the inclusion $\Map_c(L) \subset \Map(L)$ has non-trivial image. In particular, does the dual class $\kappa_i^*$ of any Miller--Morita--Mumford class $\kappa_i$ survive in $H_\ast (\Map(L);\bQ)$?

\begin{athm}
\label{mainthm-LochNess}
For any field $K$, the map $H_\ast (\Map_c(L);K) \to H_\ast (\Map(L);K)$ is zero in positive degrees. In particular, for $K=\bQ$, all dual Miller--Morita--Mumford classes $\kappa_i^*$ are sent to zero in $H_\ast (\Map(L);\bQ)$.
\end{athm}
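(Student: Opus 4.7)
The plan is to prove the theorem by induction on the homological degree $k \geq 1$. Since $\Map_c(L)$ is the directed union of the subgroups $\Map(\Sigma) \leq \Map_c(L)$ as $\Sigma$ ranges over compact subsurfaces of $L$ with one boundary component (with mapping classes fixing the boundary), any class in $H_k(\Map_c(L); K)$ is represented by some $c_\Sigma \in H_k(\Map(\Sigma); K)$, and it suffices to show that its image $\iota_*(c_\Sigma) \in H_k(\Map(L); K)$ under the natural inclusion $\iota \colon \Map(\Sigma) \hookrightarrow \Map(L)$ vanishes.

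Next, I would exploit the fact that $L$ has a single end of infinite genus by choosing a self-homeomorphism $\varphi$ of $L$ such that the iterates $\varphi^i(\Sigma)$ ($i \geq 0$) are pairwise disjoint and accumulate only at the unique end of $L$; i.e.\ $\Sigma$ is \emph{shiftable} by $\varphi$. The \emph{infinite product}
\[
\Psi(f) \;:=\; \prod_{i=0}^{\infty} \varphi^i f \varphi^{-i}
\]
then converges to a well-defined self-homeomorphism of $L$, since each factor $\varphi^i f \varphi^{-i}$ is supported on $\varphi^i(\Sigma)$, the supports are pairwise disjoint, and they escape to the unique end. This defines a group homomorphism $\Psi \colon \Map(\Sigma) \to \Map(L)$. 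Setting $\Psi'(f) := \prod_{i \geq 1} \varphi^i f \varphi^{-i}$, one has $\Psi(f) = \iota(f)\Psi'(f)$ where $\iota$ and $\Psi'$ have elementwise commuting images (their supports are disjoint). Moreover $\Psi'(f) = \varphi \Psi(f) \varphi^{-1}$ is the conjugation of $\Psi$ by $\varphi$, so $\Psi'_* = \Psi_*$ on group homology.

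Using that $\iota$ and $\Psi'$ have commuting images, $\Psi$ factors as
\[
\Map(\Sigma) \xrightarrow{\Delta} \Map(\Sigma) \times \Map(\Sigma) \xrightarrow{\iota \times \Psi'} \Map(L) \times \Map(L) \xrightarrow{m} \Map(L),
\]
where $\Delta$ is the diagonal and $m(a,b) := ab$ is a homomorphism when restricted to commuting subgroups. Combining this with the K\"unneth decomposition of the coproduct, $\Delta_*(c_\Sigma) = c_\Sigma \otimes 1 + 1 \otimes c_\Sigma + \sum_{0 < i < k} c_{(1),i} \otimes c_{(2),k-i}$, and using $\Psi'_* = \Psi_*$, yields
\[
\Psi_*(c_\Sigma) \;=\; \iota_*(c_\Sigma) + \Psi_*(c_\Sigma) + \sum_{0 < i < k} \iota_*(c_{(1),i}) \cdot \Psi_*(c_{(2),k-i}),
\]
where $\cdot$ is the partial Pontryagin product on $H_*(\Map(L); K)$ defined for classes coming from commuting subgroups. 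Cancelling $\Psi_*(c_\Sigma)$ yields $\iota_*(c_\Sigma) = -\sum_{0 < i < k} \iota_*(c_{(1),i}) \cdot \Psi_*(c_{(2),k-i})$.

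For the base case $k = 1$ the sum is empty (degree-$1$ classes are primitive), so $\iota_*(c_\Sigma) = 0$. For the inductive step $k > 1$, each $c_{(1),i}$ has degree $0 < i < k$ and its image $\iota_*(c_{(1),i})$ factors through $H_i(\Map_c(L); K) \to H_i(\Map(L); K)$, which vanishes by the inductive hypothesis; hence every cross-term is zero and $\iota_*(c_\Sigma) = 0$, completing the induction. The main technical hurdles I anticipate are (i) verifying convergence of the infinite product $\Psi(f)$ to a self-homeomorphism of $L$, which relies essentially on $L$ being one-ended and $\Sigma$ being shiftable, and (ii) making the partial Pontryagin product formalism precise so that the K\"unneth expansion above falls out exactly as stated.
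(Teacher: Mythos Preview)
Your base case $k=1$ is correct and is essentially the classical Mather/pseudo-mitotic trick. The gap is in the inductive step. The expression you write as $\iota_*(c_{(1),i}) \cdot \Psi_*(c_{(2),k-i})$ is really $\mu_*(c_{(1),i}\otimes c_{(2),k-i})$, where $\mu$ is your homomorphism $\Map(\Sigma)\times\Map(\Sigma)\to\Map(L)$ with $\mu|_{\Map(\Sigma)\times\{1\}}=\iota$ and $\mu|_{\{1\}\times\Map(\Sigma)}=\Psi'$. There is no well-defined bilinear product on $H_*(\Map(L);K)$ through which $\mu_*$ factors: the most you can say is that $\mu$ factors through the direct-product subgroup $\image(\iota)\times\image(\Psi')\hookrightarrow\Map(L)$, so that $\mu_*(a\otimes b)$ depends on the class of $a$ in $H_i(\image(\iota))\cong H_i(\Map(\Sigma))$, not on its image $\iota_*(a)\in H_i(\Map(L))$. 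Thus your inductive hypothesis, which only says $\iota_*(c_{(1),i})=0$ in $H_i(\Map(L))$, does not force $\mu_*(c_{(1),i}\otimes c_{(2),k-i})=0$. This is exactly the ``technical hurdle (ii)'' you flagged, but it is not merely technical: with a one-dimensional shift there is no room to make it work.

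The paper repairs this by working two-dimensionally. It replaces $L$ by a grid surface $\Gr(\Sigma)$ and, after a rotation move, factors the analogue of your $\mu\circ\Delta$ as
\[
\Map(\Sigma)\xrightarrow{\ \Delta\ }\Map(\Sigma)^2\xrightarrow{\ \iota_{n+1}\times\bar\psi_{1,n}\ }\Map(\Gr_{\geq n+1}(\Sigma))\times\Map(\Gr_n(\Sigma))\xrightarrow{\ \text{glue}\ }\Map(\Gr_{\geq n}(\Sigma)).
\]
Now the K\"unneth cross terms are killed \emph{before} gluing, because the first factor is $(\iota_{n+1})_*\colon H_i(\Map(\Sigma))\to H_i(\Map(\Gr_{\geq n+1}(\Sigma)))$, and the inductive hypothesis (stated uniformly for all $n$) says this vanishes in degrees $0<i<k$. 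The point is that $\Gr_{\geq n+1}(\Sigma)$ is again a grid surface, so the inductive hypothesis genuinely applies there; in your one-dimensional strip the complement of $\bigcup_{i\geq 1}\varphi^i(\Sigma)$ gives you back essentially $\Map(\Sigma)$ itself, where the hypothesis is vacuous. The paper's Remark on pseudo-mitosis makes exactly this point: the strip argument gives degree $1$, and self-similarity of the grid is what promotes it to all degrees.
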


We do not know whether this result remains true if the field $K$ is replaced by $\bZ$ (see Remark \ref{rmk:from-fields-to-Z}).

\subsection*{The general questions.}
For any (connected, second-countable, orientable) infinite-type surface $S$ with $\partial S = \varnothing$, we study the following two questions about its mapping class group $\Map(S) = \pi_0(\Homeo(S))$:

\begin{ques*}
Does $\Map(S)$ contain non-zero classes in the image of $H_*(\Map(\Sigma)) \to H_*(\Map(S))$ for
    \begin{enumerate}[nosep,label=\textup{(\Roman*)}]
        \item\label{q-compact-pre} some compact subsurface $\Sigma \subset S$?
        \setcounter{enumi}{2}
        \item\label{q-finite-type-pre} some properly-embedded finite-type subsurface $\Sigma \subset S$?
    \end{enumerate}
\end{ques*}

The assumption in \ref{q-finite-type-pre} that $\Sigma \subset S$ is properly embedded is necessary for there to be a well-defined induced map $\Map(\Sigma) \to \Map(S)$ given by extending by the identity; see Lemma \ref{lem:extend-by-id}.

As the numbering suggests, there is in fact another intermediate question between \ref{q-compact-pre} and \ref{q-finite-type-pre}. To see this, we first discuss some subgroups of $\Map(S)$ as well as some colimit groups mapping into it.

\begin{defn}
\label{defn:restricting-support}
Let $\Map_c(S) \subseteq \Map(S) = \pi_0(\Homeo(S))$ denote the subgroup of mapping classes that may be represented by a homeomorphism $\varphi$ whose \emph{support} $\supp(\varphi) = \overline{\{ p \in S \mid \varphi(p) \neq p \}} \subseteq S$ is compact. Similarly, define $\Map_f(S) \subseteq \Map(S)$ to be the subgroup of mapping classes that may be represented by a homeomorphism $\varphi$ whose support is contained in a \emph{properly-embedded finite-type subsurface} of $S$, namely a subsurface of $S$ that is closed as a subset and whose fundamental group is finitely generated.
\end{defn}

\begin{defn}
Let us denote by $\fC(S) \subseteq \fF(S)$ the posets of compact subsurfaces of $S$ and of properly-embedded finite-type subsurfaces of $S$, ordered by inclusion. For a finite-type surface $\Sigma$, let us write $\PMap(\Sigma)$ for the subgroup of elements of $\Map(\Sigma)$ that fix the punctures of $\Sigma$ pointwise. (This is an index-$p!$ subgroup if $\Sigma$ has $p$ punctures.) Define
\begin{align*}
\Map_\fC(S) &:= \underset{\Sigma \in \fC(S)}{\mathrm{colim}}(\Map(\Sigma)) \\
\Map_\fF(S) &:= \underset{\Sigma \in \fF(S)}{\mathrm{colim}}(\Map(\Sigma)) \\
\PMap_\fF(S) &:= \underset{\Sigma \in \fF(S)}{\mathrm{colim}}(\PMap(\Sigma)).
\end{align*}
Notice that the analogous $\PMap_\fC(S)$ is simply $\Map_\fC(S)$ again, since compact surfaces have no punctures.
\end{defn}

There are natural homomorphisms
\begin{equation}
\label{eq:natural-maps}
\Map_\fC(S) \longrightarrow \PMap_\fF(S) \longrightarrow \Map_\fF(S) \longrightarrow \Map(S)
\end{equation}
induced by the inclusion of posets $\fC(S) \subseteq \fF(S)$, the inclusions $\PMap(\Sigma) \subseteq \Map(\Sigma)$ and the homomorphisms $\Map(\Sigma) \to \Map(S)$ given by extending homeomorphisms of $\Sigma$ by the identity on $S \smallsetminus \Sigma$.

Since homology commutes with colimits, Questions \ref{q-compact-pre} and \ref{q-finite-type-pre} above may be reformulated as follows, where we have added one intermediate question.

\begin{ques*}
Is there a non-zero element of $H_*(\Map(S))$ in the image of the map on homology induced by
    \begin{enumerate}[nosep,label=\textup{(\Roman*)}]
        \item\label{q-compact} $\Map_\fC(S) \to \Map(S)$?
        \item\label{q-finite-type-pure} $\PMap_\fF(S) \to \Map(S)$?
        \item\label{q-finite-type} $\Map_\fF(S) \to \Map(S)$?
    \end{enumerate}
\end{ques*}

Questions \ref{q-finite-type-pure} and \ref{q-finite-type} may be reformulated in terms of inclusions of subgroups of $\Map(S)$ as follows (see Definition \ref{defn:p} below for the notation $p_S$).

\begin{lem}[Lemma \ref{lem:colimits}]
\label{lem:reformulation}
The homomorphisms \eqref{eq:natural-maps} have the following properties:
    \begin{itemize}[nosep]
        \item $\PMap_\fF(S) \to \Map_\fF(S) \to \Map(S)$ are injective with images $\Map_c(S) \subseteq \Map_f(S) \subset \Map(S)$.
        \item $\Map_\fC(S) \to \PMap_\fF(S)$ is a central extension whose kernel is free abelian of rank $p_S$.
    \end{itemize}
In particular, Questions \ref{q-finite-type-pure} and \ref{q-finite-type} are equivalent to:
    \begin{enumerate}[nosep,label=\textup{(\Roman*)}]
        \setcounter{enumi}{1}
        \item Does the inclusion $\Map_c(S) \subset \Map(S)$ induce a non-zero map on homology?
        \item Does the inclusion $\Map_f(S) \subset \Map(S)$ induce a non-zero map on homology?
    \end{enumerate}
Moreover, if $p_S = 0$ then Questions \ref{q-compact} and \ref{q-finite-type-pure} are equivalent.
\end{lem}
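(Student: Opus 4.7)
The plan is to address the three claims of the lemma in turn: (a) the images of the three colimit maps in $\Map(S)$, (b) injectivity of the two rightmost arrows, and (c) the structure of the kernel of the leftmost arrow.

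For (a), the identifications $\image(\Map_\fF(S) \to \Map(S)) = \Map_f(S)$ and $\image(\Map_\fC(S) \to \Map(S)) = \Map_c(S)$ are immediate: every element of $\Map(\Sigma)$ extends to a class supported on $\Sigma$, and every class of the relevant support type has a representative supported on some $\Sigma \in \fF(S)$ or $\fC(S)$. The identification $\image(\PMap_\fF(S) \to \Map(S)) = \Map_c(S)$ rests on the standard fact that an element of $\PMap(\Sigma)$ for finite-type $\Sigma$ admits a representative that is the identity on a neighborhood of each puncture of $\Sigma$ (fixing a puncture pointwise allows one to isotope to the identity in a small punctured-disk neighborhood); the extension-by-identity to $S$ is then compactly supported. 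The reverse inclusion uses that a compact subsurface $\Sigma_0$ has no punctures, so $\PMap(\Sigma_0) = \Map(\Sigma_0)$.

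For (b), the posets $\fC(S)$ and $\fF(S)$ are directed, so any two elements of the colimit are represented on a common $\Sigma' \in \fF(S)$. Injectivity of the colimit map thus reduces to injectivity of each individual $\Map(\Sigma') \to \Map(S)$, and likewise for $\PMap$. This is Lemma \ref{lem:extend-by-id}: an isotopy in $S$ from an extension-by-identity of $\varphi$ to $\mathrm{id}_S$ restricts, via an ambient-isotopy / Alexander-method argument exploiting the proper embedding $\Sigma' \hookrightarrow S$, to an isotopy from $\varphi$ to $\mathrm{id}_{\Sigma'}$.

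For (c), surjectivity of $\Map_\fC(S) \to \PMap_\fF(S)$ follows from (a), since both groups map onto $\Map_c(S) \subseteq \Map(S)$. The main obstacle is identifying the kernel. The natural candidates are Dehn twists $T_e$ about simple closed curves $C_e$ bounding a once-punctured disk $D_e \subset S$; such curves exist precisely when $e$ is an isolated planar end of $S$, giving $p_S$ candidates up to isotopy (I am assuming Definition \ref{defn:p} identifies $p_S$ with this count). Each $T_e$ is non-trivial in $\Map(\Sigma_i)$ for any compact $\Sigma_i \supset C_e$, but becomes trivial in $\PMap(\Sigma_i \cup D_e)$ since the pure mapping class group of a once-punctured disk with boundary is trivial (Alexander trick with a fixed puncture). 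They are independent in $\Map_\fC(S)$ by disjointness of supports. To show they generate the full kernel, the plan is to reduce to the following finite-type statement: for a compact $\Sigma_i$ properly included in a finite-type $\Sigma_j$, with $\Sigma_j \smallsetminus \Sigma_i$ decomposing into disks, once-punctured disks, and more complicated pieces, the kernel of $\Map(\Sigma_i) \to \PMap(\Sigma_j)$ is generated by Dehn twists about those boundary components of $\Sigma_i$ that are capped off by a disk or once-punctured disk. Passing to the colimit, the disk-cap twists already vanish in $\Map_\fC(S)$ (their curves are null-homotopic in $S$, hence bound a disk enlargeable into some compact subsurface), leaving exactly the punctured-disk caps, in bijection with the isolated planar ends of $S$. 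Centrality of each $T_e$ is immediate: any class $g \in \Map_\fC(S)$ is represented on a compact $\Sigma_i'$, and a representative of $T_e$ supported in a collar of $C_e$ pushed toward the end is disjoint from $\Sigma_i'$, so $T_e$ and $g$ commute in $\Map_\fC(S)$. The final assertion --- equivalence of Questions \ref{q-compact} and \ref{q-finite-type-pure} when $p_S = 0$ --- then follows from the resulting isomorphism $\Map_\fC(S) \cong \PMap_\fF(S)$.
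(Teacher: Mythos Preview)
There is a genuine gap in step (b). You assert that $\Map(\Sigma') \to \Map(S)$ is injective for every $\Sigma' \in \fF(S)$, citing Lemma~\ref{lem:extend-by-id} and an ``isotopy restriction'' argument. But Lemma~\ref{lem:extend-by-id} only establishes that the extension map is \emph{well-defined}, not injective, and your isotopy-restriction claim is false: an isotopy in $S$ between two homeomorphisms supported on $\Sigma'$ may leave $\Sigma'$, so it does not restrict. In fact the individual maps are \emph{not} all injective --- if some boundary circle $C$ of $\Sigma'$ bounds a disc or a once-punctured disc in $S$, then the Dehn twist $T_C$ is nontrivial in $\Map(\Sigma')$ but dies in $\Map(S)$. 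The paper handles this via Proposition~\ref{prop:injectivity}, which uses the Alexander method for infinite-type surfaces \cite{HMV19} to identify the kernel of $\Map(\Sigma) \to \Map(S)$ exactly, and then restricts attention to a \emph{cofinal} family of $\Sigma \in \fF(S)$ (those whose complementary pieces are all of infinite type) on which this kernel vanishes; injectivity of the colimit map follows from that.

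Your treatments of (a) and (c) are essentially the paper's. Two remarks, though. First, your surjectivity argument in (c) (``both groups map onto $\Map_c(S)$'') tacitly uses the injectivity of $\PMap_\fF(S) \to \Map(S)$ from (b), so it inherits the gap above; the paper instead argues surjectivity directly at the level of finite-type pieces. Second, the ``finite-type statement'' you reduce the kernel computation to is exactly the finite-type case of Proposition~\ref{prop:injectivity} (essentially \cite[Theorem~3.18]{FaMa11}); it should be cited rather than left as a plan, since it is where the real work lies.
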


Since the natural maps into $\Map(S)$ factor as in \eqref{eq:natural-maps}, we immediately observe:

\begin{rem}
\label{rmk:obvious-implications}
A positive answer to Question \ref{q-compact} implies a positive answer to Question \ref{q-finite-type-pure}, which implies a positive answer to Question \ref{q-finite-type}. However, a positive answer to Question \ref{q-finite-type-pure} does not necessarily imply a positive answer to Question \ref{q-compact}, as the surjective map $\Map_\fC(S) \to \PMap_\fF(S)$ does not necessarily induce surjective maps on homology in degrees greater than $1$.
\end{rem}

\begin{rem}
Theorem \ref{mainthm-LochNess} says that (with field coefficients) the answer to Question \ref{q-finite-type-pure} is negative for the surface $S=L$. It follows by Remark \ref{rmk:obvious-implications} that the answer to Question \ref{q-compact} is also negative for $S=L$. In fact, since $L$ has no punctures ($p_S = 0$), Questions \ref{q-compact} and \ref{q-finite-type-pure} are equivalent by Lemma \ref{lem:reformulation}.
\end{rem}

Our answers to Questions \ref{q-compact}--\ref{q-finite-type} depend on the \emph{genus} $g_S$ and the \emph{number of punctures} $p_S$ of $S$.

\begin{defn}[\emph{Punctures}]
\label{defn:p}
Consider the space $\Ends(S)$ of ends of $S$, together with its closed subspace $\Ends_{np}(S)$ of non-planar ends. A \emph{puncture} of $S$ is an isolated point of the space $\Ends(S) \smallsetminus \Ends_{np}(S)$; in other words, it is an end of $S$ that is not accumulated by genus and is not a limit points of other ends of $S$. Denote the set of punctures by $\cP(S)$. Since the space $\Ends(S) \smallsetminus \Ends_{np}(S)$ is separable, this set is at most countable and we write $p_S \in \{0,1,2,3,\ldots,\infty\}$ for its cardinality.
\end{defn}

\begin{notation}
For integers $g,n,b \geq 0$, we write $\Sigma_{g,b}^n$ for the unique connected, finite-type, orientable surface of genus $g$ with $b$ boundary components and $n$ punctures. If $n = 0$ we elide it from the notation, and similarly for $b$.
\end{notation}

\begin{defn}[\emph{Genus}]
\label{defn:g}
Let $S$ be any surface. Its \emph{genus} $g_S$ is the maximum integer $g\geq 0$ for which there is an embedding $\Sigma_{g,1} \hookrightarrow S$, if there is such a maximum. Otherwise, we set $g_S=\infty$. 
\end{defn}

\begin{thm*}[Theorems \ref{mainthm-infinite-genus}--\ref{mainthm-genus-zero-infinite-punctures-2}]
Let $S$ be any connected, second countable, infinite-type surface with $\partial S = \varnothing$. Answers to Questions \ref{q-compact}--\ref{q-finite-type} for $S$ are given in Table \ref{tab:results} on page \pageref{tab:results}.
\end{thm*}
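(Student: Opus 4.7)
The plan is to proceed by case analysis on the pair $(g_S, p_S)$, each configuration being handled by one of the cited Theorems \ref{mainthm-infinite-genus}--\ref{mainthm-genus-zero-infinite-punctures-2}. Two complementary techniques will drive the arguments.

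\textbf{Shiftable subsurfaces (vanishing).} To establish negative answers I would use a Mather-style swindle. Given a finite-type subsurface $\Sigma \subset S$, suppose there exists a self-homeomorphism $\psi$ of $S$ such that the iterates $\psi^i(\Sigma)$ for $i \geq 0$ are pairwise disjoint and locally finite. Local finiteness makes the infinite product $\tau(a) := \prod_{i \geq 0}\psi^i \widetilde{a}\psi^{-i}$ a well-defined element of $\Map(S)$ (with $\widetilde{a}$ the extension by the identity), and pairwise disjointness of supports makes $\tau\colon \Map(\Sigma) \to \Map(S)$ a group homomorphism. The key relation $\tau(a) = \widetilde{a}\cdot\psi\tau(a)\psi^{-1}$, together with the fact that inner automorphisms act trivially on group homology, then forces the composition $\Map(\Sigma) \to \Map(S)$ to vanish on reduced homology with any field coefficients. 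I would apply this in every configuration where such a shift exists---which includes all cases with $g_S = \infty$ (recovering Theorem \ref{mainthm-LochNess} when $S=L$) and also several genus-zero cases with enough ``room'' among the ends of $S$.

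\textbf{Homological stability (survival).} For the remaining configurations, where no shift is available and positive answers are expected, I would rely on the Harer--Ivanov and Randal-Williams--Wahl stability theorems for $H_\ast(\Map(\Sigma_{g,b}^n))$. Exhibiting compatible nested sequences of properly-embedded finite-type subsurfaces $\Sigma_{g,1}^n \hookrightarrow \Sigma_{g',1}^{n'} \hookrightarrow \cdots \hookrightarrow S$ allows stable classes---Miller--Morita--Mumford classes when the genus is finite and accumulated inside $S$, or classes pulled back from symmetric-group homology in the genus-zero many-punctures case---to be transported into $H_\ast(\Map(S))$; non-vanishing of their image in $H_\ast(\Map(S))$ is then verified via a characteristic-class calculation on an associated surface bundle.

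\textbf{Main obstacle.} The most delicate rows of the table are those with $g_S = 0$ and intermediate puncture structure: here the shift homeomorphism must be constructed carefully with respect to the end space of $S$, and the genus-zero stability range is weaker, so neither technique applies out of the box. A secondary subtlety is the gap between Questions \ref{q-finite-type-pure} and \ref{q-finite-type}: puncture-permuting classes live in $\Map_\fF(S)/\PMap_\fF(S)$ and must be analysed separately, which is the source of the partial (rather than complete) answers recorded in certain genus-zero rows of the table.
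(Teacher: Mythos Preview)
Your vanishing strategy contains a real gap. The single-shift relation $\tau(a) = \widetilde{a}\cdot\psi\tau(a)\psi^{-1}$, together with triviality of inner automorphisms on homology, does \emph{not} force $\iota_*=0$ beyond degree~$1$. When one factors $\tau$ as the product of $\iota$ (supported on the zeroth copy $\Sigma_0\cong\Sigma$) and $c_\psi\circ\tau$ (supported on the rest of the strip) and applies K\"unneth, the cross terms in degree $j$ involve the map $H_k(\Map(\Sigma))\to H_k(\Map(\Sigma_0))$ for $0<k<j$, which is an isomorphism; an inductive hypothesis on $\iota_*$ into $H_*(\Map(S))$ does not make these vanish. (A single pseudo-mitosis yields only degree~$1$; see Remark~\ref{rem-pseudo-mitosis-pf}.) The paper's remedy is a genuinely two-dimensional iteration via \emph{grid surfaces}: one proves that $\iota_n\colon\Map(\Sigma)\to\Map(\Gr_{\geq n}(\Sigma))$ vanishes on $H_j$ for \emph{all} $n$ simultaneously, so that in the K\"unneth step the relevant factor is $\iota_{n+1}$ landing in $\Map(\Gr_{\geq n+1}(\Sigma))$---itself a grid surface---where the inductive hypothesis applies directly (Proposition~\ref{prop:grid-surface}).

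Two further points on the non-vanishing side. First, homological stability in the paper is used chiefly for \emph{vanishing}, not survival: for $g_S=\infty$ one manufactures a disjoint one-holed high-genus $\Sigma'$ with $H_i(\Map(\Sigma'))\twoheadrightarrow H_i(\Map(\Sigma\cup\Sigma'))$ by Harer stability, and then shifts $\Sigma'$ (Proposition~\ref{prop:transfer-to-shiftable}); the survival results come instead from detecting maps out of $\Map(S)$---to $\Map(\Sigma_g)$ by filling in ends when $0<g_S<\infty$, or to symmetric-group and spherical-braid quotients when $g_S=0$. Second, your plan has no mechanism for the row $g_S=\infty$, $0<p_S<\infty$: no shift exists for a punctured finite-type $\Sigma$ here (there is no mixed end), yet Questions~\ref{q-finite-type-pure}--\ref{q-finite-type} have \emph{positive} answers. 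The paper handles this via the B\"odigheimer--Tillmann construction, recording derivatives at the marked points to build a map $B\Map(S\smallsetminus P)\to B(\bS^1\wr\mathfrak{S}_p)$ that splits on homology (Proposition~\ref{prop:finite-p-part-2}); neither MMM classes nor symmetric-group homology detect these classes.
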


{\def\arraystretch{1.4}
\begin{table}[t]
\captionsetup{singlelinecheck=off}
    \centering
    \begin{tabular}{|r|c|c|c|c|c|}
        \hline
        & $p_S = \infty$ & $4 \leq p_S < \infty$ & $p_S \in \{2,3\}$ & $p_S = 1$ & $p_S = 0$ \\
        \hline
        $g_S = \infty$ & \specialcell{$\exists \text{ mixed end}$: \xmark\xmark\xmark \\ $\nexists \text{ mixed end}$: \xmark\qmark\qmark} & \multicolumn{3}{c|}{\xmark\cmark\cmark} & \xmark\xmark\xmark \\
        \hline
        $0<g_S<\infty$ & \multicolumn{5}{c|}{\cmark\cmark\cmark} \\
        \hline
        $g_S = 0$ & \specialcell{$\Ends(S)$ is $\text{TD}_{\geq 4}$: \cmark\cmark\cmark \\ $\Ends(S) \cong [0,\omega^\alpha]$: \xmark\xmark\xmark \\ otherwise: \qmark\qmark\qmark} & \cmark\cmark\cmark & \qmark\qmark\cmark & \multicolumn{2}{c|}{\xmark\xmark\xmark} \\
        \hline
    \end{tabular}
    \caption{Summary of the results of Theorems \ref{mainthm-infinite-genus}--\ref{mainthm-genus-zero-infinite-punctures-2} answering Questions \ref{q-compact}--\ref{q-finite-type}. See Notation \ref{notation-for-table} for terminology.}
    \label{tab:results}
\end{table}
}

\begin{notation}
\label{notation-for-table}
In Table \ref{tab:results}, a triple $\text{\textbf{ABC}}$ with $\text{\textbf{A}},\text{\textbf{B}},\text{\textbf{C}} \in \{ \text{\cmark} , \text{\xmark} , \text{\qmark} \}$ encodes the answers to Questions \ref{q-compact}, \ref{q-finite-type-pure}, \ref{q-finite-type} in that order. The answer to Question \ref{q-compact} is positive if $\text{\textbf{A}} = \text{\cmark}$, negative if $\text{\textbf{A}} = \text{\xmark}$ and unknown (to us) if $\text{\textbf{A}} = \text{\qmark}$, and similarly for Questions \ref{q-finite-type-pure} and \ref{q-finite-type} with $\text{\textbf{A}}$ replaced by $\text{\textbf{B}}$ and $\text{\textbf{C}}$ respectively.
One caveat is that (almost) all negative answers assume field coefficients for homology, whereas all positive answers assume integral coefficients for homology; see Theorems \ref{mainthm-infinite-genus}--\ref{mainthm-genus-zero-infinite-punctures-2} for the precise statements.
The other notation in Table \ref{tab:results} is explained in Definitions \ref{defn:p}, \ref{defn:g}, \ref{def:mixed-end}, \ref{def:TDn} and Notation \ref{notation:ordinal-interval}. In the rest of this paper, we will sometimes write simply $H_*(-)$ to refer to integral homology $H_*(-;\bZ)$.
\end{notation}

In the remainder of the introduction, we explain the results summarised in Table \ref{tab:results} in more detail.

\subsection*{Results in infinite genus.}

We begin with the infinite-genus ($g_S = \infty$) setting, for which we need one preliminary definition. If both $g_S$ and $p_S$ are infinite, then $S$ must have at least one end that is \emph{accumulated by genus} (every neighbourhood of the end has infinite genus) and at least one end that is \emph{accumulated by punctures} (every neighbourhood of the end has infinitely many punctures).

\begin{defn}[\emph{Mixed end}]
\label{def:mixed-end}
We say that $S$ has a \emph{mixed end} if it has an end that is accumulated by both genus and punctures.
\end{defn}

Having a mixed end implies, of course, that $g_S = p_S = \infty$. The converse is not true, however: if we remove from the Loch Ness monster surface a subset homeomorphic to $\bN^+$, the one-point compactification of $\bN$, then the resulting surface has $g_S = p_S = \infty$ but no mixed ends.

Generalising Theorem \ref{mainthm-LochNess} for the Loch Ness monster surface, we have the following result.

\begin{athm}
\label{mainthm-infinite-genus}
Suppose that $g_S = \infty$.
\begin{enumerate}[nosep,label=\textup{(\arabic*)}]
\item\label{mainthm-infinite-genus-1} The answer to Question \ref{q-compact} is negative for homology with any field coefficients.
\end{enumerate}
For Questions \ref{q-finite-type-pure} and \ref{q-finite-type}:
\begin{enumerate}[nosep,label=\textup{(\arabic*)}]
\setcounter{enumi}{1}
\item\label{mainthm-infinite-genus-2} If $p_S = 0$ then $\Map_f(S) \subset \Map(S)$ induces the zero map on homology with field coefficients.
\item\label{mainthm-infinite-genus-3} If $0 < p_S < \infty$ then $\Map_c(S) \subset \Map(S)$ induces a non-zero map on integral homology.
\item\label{mainthm-infinite-genus-4} If $p_S = \infty$ and $S$ has a mixed end, then $\Map_f(S) \subset \Map(S)$ induces the zero map on homology with field coefficients.
\end{enumerate}
\end{athm}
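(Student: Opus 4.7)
The plan is to handle parts (1), (2), and (4) by a common Mather-style shift/swindle argument, and to treat the positive result (3) by an explicit construction.

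For parts (1), (2), and (4), given a subsurface $\Sigma \subset S$ --- compact in part (1), properly-embedded finite-type in parts (2) and (4) --- the first step is to produce a ``shift'' homeomorphism $\phi \in \Homeo(S)$ whose iterates $\phi^i(\Sigma)$ for $i \geq 0$ are pairwise disjoint. The existence of such a $\phi$ in part (1) is immediate from $g_S = \infty$: embed $\Sigma$ in a Loch Ness-like subsurface of $S$ and translate along its ``handle axis''. In part (2), the hypothesis $p_S = 0$ combined with $g_S = \infty$ means every end of $\Sigma$ visible in $S$ is either non-planar or non-isolated, so $\Sigma$ can still be shifted towards an infinite-genus end of $S$ disjoint from it; in part (4), a mixed end simultaneously accommodates both the genus and the punctures of $\Sigma$, so the same construction works. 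With $\phi$ in hand, define
\[
F : \Map(\Sigma) \longrightarrow \Map(S), \qquad F(g) := \prod_{i \geq 0} \phi^i \iota(g) \phi^{-i},
\]
where $\iota$ is extension by the identity. Each point of $S$ is moved by only finitely many factors, so $F(g)$ is a well-defined homeomorphism; disjointly-supported elements commute, so $F$ is a group homomorphism satisfying the pointwise identity $F(g) = \iota(g) \cdot \phi F(g) \phi^{-1}$.

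Now pass to $H_*(-;K)$ for a field $K$. The Künneth isomorphism (this is where the field hypothesis enters), together with the elementwise commutation of $\iota(\Map(\Sigma))$ with $\phi \iota(\Map(\Sigma)) \phi^{-1}$, allows us to write
\[
F_*(\alpha) \;=\; \sum \iota_*(\alpha_{(1)}) \star (\phi F \phi^{-1})_*(\alpha_{(2)}),
\]
in Sweedler notation for the Hopf-algebra coproduct on $H_*(\Map(\Sigma); K)$, where $\star$ is the Pontryagin product. Conjugation by $\phi \in \Map(S)$ is inner and hence trivial on homology, so $(\phi F \phi^{-1})_* = F_*$. Decomposing $\Delta(\alpha) = \alpha \otimes 1 + 1 \otimes \alpha + \Delta'(\alpha)$ with $\Delta'$ collecting the middle terms, and inducting on the positive degree of $\alpha$, the $\Delta'$-contribution vanishes by the induction hypothesis, leaving $\iota_*(\alpha) = 0$. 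Taking the colimit over $\Sigma \in \fC(S)$ in part (1) (respectively $\fF(S)$ in parts (2) and (4)) and invoking Lemma~\ref{lem:reformulation} completes these cases.

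For part (3), the swindle necessarily fails, since no properly-embedded finite-type subsurface containing all $p_S$ (finitely many) punctures of $S$ can be shifted off itself. To exhibit a non-zero compactly-supported class, we use a Birman-style short exact sequence
\[
1 \longrightarrow \pi_1(\mathrm{Conf}_{p_S}(\hat{S})) \longrightarrow \Map(S) \longrightarrow \Map(\hat{S}) \longrightarrow 1,
\]
where $\hat{S}$ denotes $S$ with its punctures filled in as marked points. Point-pushing along compact loops realises the kernel inside $\Map_c(S) \subset \Map(S)$, producing candidate classes in $H_*(\Map_c(S); \bZ)$; their non-vanishing in $H_*(\Map(S); \bZ)$ is then verified by detection in a suitable abelian quotient (e.g.\ the linking number between two punctures when $p_S \geq 2$, with a variant argument based on a low-degree class from $H_1(\hat S)$ when $p_S = 1$). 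The main obstacle is the swindle step in (1): converting the pointwise identity $F = \iota \cdot (\phi F \phi^{-1})$ into an inductive homological identity requires the Hopf-algebra / primitive structure available only over a field, which is precisely what forces the field-coefficient hypothesis (cf.\ Remark~\ref{rmk:from-fields-to-Z}).
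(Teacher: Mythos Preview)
Your shift-and-swindle strategy for parts (1), (2), (4) is the right idea --- indeed the paper reduces to exactly this via Corollary~\ref{coro:shiftable} --- but the inductive step as written has a genuine gap. The product $\star$ you invoke does not exist on $H_*(\Map(S);K)$, since $\Map(S)$ is not abelian. What you actually have is $F = \nu\circ\Delta$ for the homomorphism $\nu\colon\Map(\Sigma)\times\Map(\Sigma)\to\Map(S)$, $(g_1,g_2)\mapsto\iota(g_1)\cdot c_\phi F(g_2)$, and the middle K\"unneth contribution is $\nu_*(\alpha_{(1)}\otimes\alpha_{(2)})$ with $0<\lvert\alpha_{(1)}\rvert<n$. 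This does \emph{not} vanish just because $\iota_*(\alpha_{(1)})=0$ in $H_*(\Map(S))$: in general, if $G_1,G_2\leq H$ commute and $G_1\hookrightarrow H$ is zero on $H_1$, the cross-product $H_1(G_1)\otimes H_1(G_2)\to H_2(H)$ can still be nonzero (take two commuting involutions generating a Klein four-group in $H=A_5$; its preimage in the binary icosahedral group is $Q_8$, so the map on $H_2$ is nonzero). The paper fixes this with a genuinely two-dimensional setup: it proves that $\Map(\Sigma)\to\Map(\Gr(\Sigma))$ is zero on homology (Proposition~\ref{prop:grid-surface}) by inducting on the stronger hypothesis that $(\iota_n)_*\colon H_{<j}(\Map(\Sigma))\to H_{<j}(\Map(\Gr_{\geq n}(\Sigma)))$ vanishes for \emph{every} row index $n$, so that the middle terms factor through $(\iota_{n+1})_*$ and die. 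Equivalently one must iterate the pseudo-mitosis step, as in Remark~\ref{rem-pseudo-mitosis-pf}; a single one-dimensional shift is not enough. (Once this is fixed, your route --- enlarge $\Sigma$ to one boundary component, then shift --- actually bypasses the paper's appeal to Harer stability in Proposition~\ref{prop:transfer-to-shiftable}.)

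For part (3), the Birman/point-pushing idea is natural but the detection step does not work as sketched. There is no linking number between punctures on a positive-genus surface, and for $p_S=1$ the point-pushing classes all die already in $H_1(\Map(S);\bZ)$: by the five-term sequence this image is governed by the coinvariants $H_1(\hat S)_{\Map(\hat S)}$, and the handle-wise $SL_2(\bZ)$ action forces these to vanish. Since $\pi_1(\hat S)$ is free, point-pushing contributes nothing to $H_2$ either. The paper takes a different route: it builds a map $B\Map(S\smallsetminus P)\to B(\bS^1\wr\mathfrak{S}_p)$ from the B\"odigheimer--Tillmann derivative-at-marked-points homomorphism (Definition~\ref{def:map-to-wreath-product}), uses their stable splitting to show it has a section on $H_*(\Map_c(S\smallsetminus P))$ (Theorem~\ref{thm:BT}), and --- this is the delicate point --- extends it from $\Map_f$ to the full $\Map(S\smallsetminus P)$ using Yagasaki's theorem that $\Homeo_0$ of an infinite-type surface is contractible (Proposition~\ref{prop:extension}). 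Composing with $\bS^1\to(\bS^1)^p\to\bS^1\wr\mathfrak{S}_p\to\bS^1$ then factors the identity of $H_*(\bC\bP^\infty)$ through \eqref{eq:inclusion-f}, giving a $\bZ$ summand in every even degree.
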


In the context of Questions \ref{q-finite-type-pure} and \ref{q-finite-type}, our methods do not apply if $g_S = p_S = \infty$ but $S$ does \emph{not} have a mixed end, so in this case Questions \ref{q-finite-type-pure} and \ref{q-finite-type} remain open.

\begin{rem}
In case \ref{mainthm-infinite-genus-3} of Theorem \ref{mainthm-infinite-genus}, we prove something stronger than simply the statement that the induced map $H_*(\Map_c(S);\bZ) \to H_*(\Map(S);\bZ)$ is non-zero: its image contains a $\bZ$ summand in every even degree; see Proposition \ref{prop:finite-p-part-2}.
\end{rem}

\begin{rem}
\label{rmk:from-fields-to-Z}
In the cases where we prove, in Theorem \ref{mainthm-infinite-genus}, that a group homomorphism induces the zero map on homology with all field coefficients, it does \emph{not} automatically follow that the same statement is also true with integral coefficients. Indeed it is possible in general for homomorphisms $G \to H$ to induce trivial maps on homology with all field coefficients but not with integral coefficients. An example is given by any non-trivial homomorphism $\bZ \to \bQ/\bZ$: it is non-trivial on $H_1(-;\bZ)$ by construction, but trivial on homology with field coefficients because $\bQ/\bZ \otimes_\bZ K = 0$ for any field $K$. See also Remark \ref{rmk:field-vs-Z} for why we require field coefficients in the proof.
\end{rem}

\subsection*{Results in finite positive genus.}

In the case when $S$ has finite but positive genus, the answers to Questions \ref{q-compact}--\ref{q-finite-type} are easy to state.

\begin{athm}
\label{mainthm-finite-positive-genus}
Suppose that $0 < g_S < \infty$. Then the integral homology $H_*(\Map(S);\bZ)$ contains non-zero classes that are supported on $\Map(\Sigma)$ for compact $\Sigma \subset S$. In other words, with integral coefficients, the answer to Question \ref{q-compact} is positive; hence the answers to Questions \ref{q-finite-type-pure} and \ref{q-finite-type} are also positive.
\end{athm}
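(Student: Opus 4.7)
The plan is to detect non-vanishing on homology via the \emph{symplectic representation} of $\Map(S)$ on the canonical ``genus part'' of $H_1(S;\bZ)$.

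Setting $g = g_S$, I would first choose a compact subsurface $\Sigma \cong \Sigma_{g,1} \hookrightarrow S$ whose complement $S \smallsetminus \Sigma$ is connected and planar, so that $\Sigma$ carries all the genus of $S$. Since $0 < g < \infty$, such $\Sigma$ exists, and I would argue that any two such ``genus-maximising'' subsurfaces are ambient isotopic in $S$: their complements are connected planar surfaces with the same end space, hence are homeomorphic by the Kerékjártó--Richards classification, and a homeomorphism between them can be promoted to an ambient isotopy of $S$. Consequently, every element of the orientation-preserving part of $\Map(S)$ preserves the isotopy class of $\Sigma$, and hence also the subgroup $V := \image\bigl(H_1(\Sigma;\bZ) \to H_1(S;\bZ)\bigr)$.

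A short non-degeneracy argument using the intersection pairing shows $H_1(\Sigma;\bZ) \to H_1(S;\bZ)$ is injective, so $V \cong \bZ^{2g}$, and the algebraic intersection pairing on $H_1(S;\bZ)$ restricts to the standard symplectic form on $V$. Since orientation-preserving homeomorphisms preserve this pairing, I obtain a homomorphism
\[
\rho \colon \Map(S) \longrightarrow \mathrm{Sp}(2g,\bZ)
\]
(possibly after passing to an index-$2$ subgroup) whose composition with the extension-by-identity map $\Map(\Sigma) \to \Map(S)$ is the classical symplectic representation of $\Map(\Sigma_{g,1})$. It then suffices to exhibit a positive degree in which the classical symplectic representation is non-zero on integral homology, for this forces $H_*(\Map(\Sigma);\bZ) \to H_*(\Map(S);\bZ)$ to be non-zero in the same degree. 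For $g = 1$ I would work in $H_1$: both $H_1(\Map(\Sigma_{1,1});\bZ) \cong \bZ$ and $H_1(\mathrm{Sp}(2,\bZ);\bZ) \cong \bZ/12$ are non-zero, and a Dehn twist about a non-separating simple closed curve maps to a transvection representing a non-zero class. For $g \geq 2$, where $\mathrm{Sp}(2g,\bZ)$ is (virtually) perfect, I would work in $H_2$ via the Meyer signature cocycle: it generates a $\bZ$-summand of $H^2(\mathrm{Sp}(2g,\bZ);\bZ)$, and its pullback along the symplectic representation is the non-torsion class $\tfrac{1}{3}\kappa_1 \in H^2(\Map(\Sigma_{g,1});\bZ)$ (with $\kappa_1 \neq 0$ rationally for $g \geq 2$ by work of Morita and, stably, by Madsen--Weiss); pairing with a dual $H_2$-class then shows the map $H_2(\Map(\Sigma);\bZ) \to H_2(\mathrm{Sp}(2g,\bZ);\bZ)$ is non-zero.

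The main obstacle is the first step, namely the uniqueness up to ambient isotopy of the genus-maximising compact subsurface of $S$, which is what makes $\rho$ well-defined on all of $\Map(S)$ rather than just on its pure subgroup. This requires combining the Kerékjártó--Richards classification of planar surfaces with an ambient isotopy extension argument. Once this is in place, the construction of $\rho$ and the non-vanishing of the standard symplectic representation on $H_1$ (for $g=1$) or $H_2$ (for $g \geq 2$) are fairly standard.
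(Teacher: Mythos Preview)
Your approach via the symplectic representation is a genuinely different route from the paper's, and largely workable, but two points deserve attention.

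First, your ``main obstacle'' --- that any two genus-maximising copies of $\Sigma_{g,1}$ are ambient isotopic --- is not established by the sketch you give. Knowing the complements are abstractly homeomorphic only tells you that $\Sigma$ and $\Sigma'$ lie in the same $\Map(S)$-orbit; it does not say the homeomorphism carrying one to the other is isotopic to the identity, and ``promoting a homeomorphism of complements to an ambient isotopy of $S$'' conflates these two statements. The paper sidesteps this issue entirely: since $g_S<\infty$, all ends of $S$ are planar and the Freudenthal compactification $\bar{S}$ is the closed surface $\Sigma_g$; homeomorphisms of $S$ extend uniquely to $\bar{S}$, giving a well-defined homomorphism $\Map(S)\to\Map(\Sigma_g)$ with no isotopy argument needed. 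Your $\rho$ then factors as $\Map(S)\to\Map(\Sigma_g)\to\mathrm{Sp}(2g,\bZ)$, so the Freudenthal map both repairs the gap and already carries all the information you are after.

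Second, precisely because $\rho$ factors through $\Map(\Sigma_g)$, your detection argument via the Meyer cocycle runs into trouble at $g=2$: the composite $\Map(\Sigma_{g,1})\to\Map(S)\xrightarrow{\rho}\mathrm{Sp}(2g,\bZ)$ passes through $\Map(\Sigma_2)$, where $H^2(\Map(\Sigma_2);\bQ)=0$, so the rational pullback of the Meyer class vanishes --- contrary to your assertion that it is the non-torsion class $\tfrac{1}{3}\kappa_1$. The paper instead stops at $\Map(\Sigma_g)$: the composite $\Map(\Sigma_{g,1})\to\Map(S)\to\Map(\Sigma_g)$ is exactly the capping map, which is surjective on $H_1$ for $g\geq 1$ and on $H_2$ for $g\geq 2$ by Harer's stability, and one then only needs the elementary facts $H_1(\Map(\Sigma_1))\cong\bZ/12$ and $H_2(\Map(\Sigma_g))\neq 0$ for $g\geq 2$. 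This is both simpler and uniform in $g$.
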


\subsection*{Results in genus zero.}

When $S$ has genus zero, its homeomorphism type is completely determined by its space of ends $\Ends(S)$, which may be any space that is homeomorphic to a closed subset of the Cantor set $\cC$ (see \S\ref{s:surfaces} for more details). In this case \emph{punctures} of $S$ are simply isolated points of $\Ends(S)$. If the set $\cP(S)$ of punctures is finite, then $\Ends(S)$ is homeomorphic to the topological disjoint union $\cC \sqcup \cP(S)$, where $\cP(S)$ has the discrete topology (see \S\ref{ss:structure-of-end-spaces}). There are therefore two cases:
\begin{itemize}[nosep]
\item[\textbf{1.}] $\Ends(S)$ is homeomorphic to $\cC \sqcup \{1,\ldots,p\}$ for some non-negative integer $p = p_S < \infty$;
\item[\textbf{2.}] $\Ends(S)$ has (countably) infinitely many isolated points, i.e.~$p_S = \infty$.
\end{itemize}

\subsubsection*{Case \textup{\textbf{1.}}}
In the first case (finitely many punctures) we have the following.

\begin{athm}
\label{mainthm-genus-zero-finite-punctures}
Suppose that $g_S = 0$ and $0\leq p_S < \infty$. Then we have:
\begin{enumerate}[nosep,label=\textup{(\arabic*)}]
\item\label{mainthm-genus-zero-finite-punctures-1} If $p_S \in \{0,1\}$ then $\Map_f(S) \subset \Map(S)$ induces the zero map on homology with any coefficients.
\item\label{mainthm-genus-zero-finite-punctures-2} If $p_S \geq 2$ then $\Map_f(S) \subset \Map(S)$ induces a non-zero map on homology with integral coefficients.
\item\label{mainthm-genus-zero-finite-punctures-3} If $p_S \geq 4$ then $H_*(\Map(S);\bZ)$ contains non-zero classes supported on a compact $\Sigma \subset S$.
\end{enumerate}
\end{athm}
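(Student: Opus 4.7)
The proof has three parts, handled by progressively more delicate arguments.

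For part (1), when $p_S \in \{0,1\}$, I apply the shiftable-subsurface machinery. The end space of $S$ is $\cC$ or $\cC \sqcup \{\ast\}$, so any properly embedded finite-type $\Sigma \subset S$ has a complementary component of $S \smallsetminus \Sigma$ whose end space contains a Cantor set and (when $p_S = 1$) avoids the puncture; such a component is itself homeomorphic to $S$ with an open disc removed. Iterating, one produces pairwise disjoint copies $\Sigma = \Sigma_0, \Sigma_1, \Sigma_2, \ldots \subset S$ and a self-homeomorphism $\sigma \in \Homeo(S)$ with $\sigma(\Sigma_i) = \Sigma_{i+1}$; their union is locally finite, so they embed the direct product $\prod_{i \geq 0} \Map(\Sigma_i)$ into $\Map(S)$. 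The standard shift swindle, using that conjugation by $\sigma$ is trivial on $H_*(\Map(S); R)$, then forces every positive-degree class in the image of $H_*(\Map(\Sigma); R) \to H_*(\Map(S); R)$ to vanish, for any coefficient ring $R$.

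For part (2), when $p_S \geq 2$, I detect a class via the permutation action on punctures. Choose distinct $p_1, p_2 \in \cP(S)$ and a properly embedded finite-type $\Sigma \subset S$ having $p_1, p_2$ as two of its ends --- for instance, a closed disc with $p_1, p_2$ as interior punctures (so $\Sigma \cong \Sigma_{0,1}^2$). The half-twist $h \in \Map(\Sigma)$ exchanging $p_1, p_2$ maps under $\Map(\Sigma) \to \Map(S) \to \mathrm{Sym}(\cP(S)) \xrightarrow{\mathrm{sgn}} \bZ/2$ to the non-trivial element, so the class $[h] \in H_1(\Map(\Sigma); \bZ)$ has non-zero image in $H_1(\Map(S); \bZ)$.

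For part (3), when $p_S \geq 4$, the argument must be more delicate because any \emph{compact} subsurface $\Sigma \subset S$ contains no punctures; consequently $\Map(\Sigma)$ lies inside the kernel of $\Map(S) \to \mathrm{Sym}(\cP(S))$ and the sign detection from part (2) is unavailable. Exploiting $p_S \geq 4$, I choose a compact pair of pants $\Sigma = \Sigma_{0,3} \subset S$ whose three boundary circles separate $S$ into regions containing $\{p_1, p_2\}$, $\{p_3, p_4\}$ and the remaining ends respectively. The target is a class built from a boundary Dehn twist $T_{\gamma_{12}}$ of $\Sigma$ (or from a cap product of two such commuting twists), and the plan is to detect it via a ``pair-linking'' invariant on the pure mapping class group $\ker(\Map(S) \to \mathrm{Sym}(\cP(S)))$ analogous to the classical linking homomorphisms on pure braid groups. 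The principal obstacle is constructing such an invariant directly on the full big mapping class group $\Map(S)$: my intended route is a capping construction collapsing the Cantor end of $S$ to an additional puncture, producing a compatible homomorphism $\Map(S) \to \Map(\Sigma_0^{p_S+1})$ (or an analogous action on a finite-type quotient) whose abelianization detects $T_{\gamma_{12}}$ via a linking class; the hypothesis $p_S \geq 4$ ensures that $\gamma_{12}$ remains essential after capping and that the linking class is non-trivial in the abelianization.
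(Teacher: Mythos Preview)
Part (2) is correct and coincides with the paper's argument.

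Part (1) has a genuine gap. The shift swindle, in the form ``a homomorphism induces zero on homology'', does not hold over an arbitrary ring: its inductive step relies on naturality of the K\"unneth decomposition of $H_*(\Map(\Sigma)\times\Map(\Sigma))$, which fails over $\bZ$ in the presence of Tor terms. This is exactly why the paper's own shiftable-subsurface result (Corollary~\ref{coro:shiftable}) is stated only for field coefficients; see Remark~\ref{rmk:field-vs-Z}. The paper obtains part~(1) over $\bZ$ by a different route: any compact $\Sigma\subset S$ can be enlarged, by absorbing all but one complementary component, to a subsurface homeomorphic to $\bD^2\smallsetminus\cC$, so $\Map(\Sigma)\to\Map(S)$ factors through $\Map(\bD^2\smallsetminus\cC)$, which is \emph{acyclic over $\bZ$} by their earlier work. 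The non-compact case when $p_S=1$ is then handled by a separate Lyndon--Hochschild--Serre comparison. (There is also a secondary issue with your shifting construction when $p_S=1$ and $\Sigma$ carries the unique puncture: $S$ has only one puncture, so you cannot find disjoint shifted copies of such a $\Sigma$. This is easily repaired by first reducing to compact $\Sigma$ via $\Map_c(S)=\Map_f(S)$ for $p_S\le 1$.)

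Part (3) also has a gap. The homomorphism $\Map(S)\to\Map(\Sigma_0^{p_S+1})$ you sketch, collapsing the Cantor set of ends to an additional puncture, does not exist: the Freudenthal compactification of $S$ is $\bS^2$, but $\bS^2/\cC$ is not a manifold (its first homology is enormous), so no punctured sphere arises this way. The paper's construction instead \emph{fills in} the Cantor ends: since $P=\cP(S)$ is topologically distinguished in $\Ends(S)$, every self-homeomorphism of $S$ extends to $\bS^2$ and restricts to $\bS^2\smallsetminus P$, giving a well-defined $\Map(S)\to\Map(\Sigma_0^{p_S})$ (one fewer puncture than your target). Detection then runs through the abelianisation of $\Map(\Sigma_0^{p_S})$ via the spherical braid group, which is cyclic of order $2k$ with $k\ge 2$ exactly when $p_S\ge 4$; your Dehn twist $T_{\gamma_{12}}$ hits the nonzero element $2\in\bZ/2k$. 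Your overall strategy is therefore sound, but the detecting map must forget the Cantor ends rather than collapse them.
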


In short, using the terminology of Notation \ref{notation-for-table} and the implications of Remark \ref{rmk:obvious-implications}, the answers to Questions \ref{q-compact}--\ref{q-finite-type} in the three cases of Theorem \ref{mainthm-genus-zero-finite-punctures} are \xmark\xmark\xmark, \qmark\qmark\cmark and \cmark\cmark\cmark respectively. The two settings not covered by Theorem \ref{mainthm-genus-zero-finite-punctures} are Questions \ref{q-compact} and \ref{q-finite-type-pure} when $g_S = 0$ and $p_S \in \{2,3\}$.

\subsubsection*{Case \textup{\textbf{2.}}}
In the second case (infinitely many punctures), our results are much more partial, and the answers to Questions \ref{q-compact}--\ref{q-finite-type} appear to depend very subtly on the structure of $\Ends(S)$, which may be very complicated (in particular, there are uncountably many different homeomorphism types that $\Ends(S)$ may have in the case $p_S = \infty$). To state our results, we need some preliminary definitions and recollections.

\begin{defn}
\label{def:topologically-distinguished-subset}
A subset $A$ of a space $X$ is \emph{topologically distinguished} if one can detect whether a point $x \in X$ lies in $A$ by looking at an arbitrarily small neighbourhood of $x$ in $X$. Formally, this means that if $a \in A$ and $x \in X \smallsetminus A$ and $U,V$ are neighbourhoods of $a,x$ in $X$ respectively, then the based spaces $(U,a)$ and $(V,x)$ are \emph{not} homeomorphic.
\end{defn}

\begin{notation}
\label{notation:ordinal-interval}
Write $\omega$ for the first infinite ordinal (the ordinal of $\bN$) and denote by $[0,\beta]$ the \emph{closed ordinal interval} below $\beta$, i.e.~the ordinal $\beta + 1$ given the order topology. See \S\ref{ss:countable-end-spaces} for more details.
\end{notation}

The space $\Ends(S)$ is compact and Hausdorff, so if it is in addition countable (and non-empty), then it must be homeomorphic to the disjoint union of $n$ copies of $[0,\omega^\alpha]$ for a (unique) positive integer $n$ and countable ordinal $\alpha$. This is a theorem of Mazurkiewicz and Sierpi{\'n}ski \cite{MazurkiewiczSierpinski1920}, recalled as Theorem \ref{thm:MazurkiewiczSierpinski} in \S\ref{ss:countable-end-spaces} below.

\begin{notation}
For a positive integer $n$ and countable ordinal $\alpha$, write $O(n,\alpha)$ for the topological disjoint union of $n$ copies of the space $[0,\omega^\alpha]$.
\end{notation}

The discussion above implies that, if $\Ends(S)$ is countable and non-empty, then it is homeomorphic to $O(n,\alpha)$ for a unique pair $(n,\alpha)$.

\begin{defn}
\label{def:TDn}
For an integer $n\geq 0$, we say that a space $X$ is $\text{TD}_{\geq n}$ if it has a finite, topologically distinguished subset $A \subseteq X$ of cardinality at least $n$.
\end{defn}

\begin{ex}
\label{eg:TDn}
For example, the maximal element $\omega^\alpha \in [0,\omega^\alpha]$ is topologically distinguished (it is the unique point of Cantor-Bendixson rank $\alpha + 1$), so it follows that $O(n,\alpha)$ is $\text{TD}_{\geq m}$ for any $m\leq n$.
\end{ex}

Our first result in the setting $(g_S,p_S) = (0,\infty)$ is the following, in which the end-space $\Ends(S)$ may be either countable or uncountable.

\begin{athm}
\label{mainthm-genus-zero-infinite-punctures-1}
Suppose that $g_S = 0$ and that $\Ends(S)$ is $\text{TD}_{\geq 4}$. Then $H_*(\Map(S);\bZ)$ contains non-zero classes supported on a compact $\Sigma \subset S$.
\end{athm}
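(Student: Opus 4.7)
My plan is to construct a detection homomorphism from $\Map(S)$ into the mapping class group of the $4$-punctured sphere, and then to exhibit a compactly supported class whose image on $H_1$ is non-zero.

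\textbf{Setup.} By the $\mathrm{TD}_{\geq 4}$ hypothesis, I would fix a topologically distinguished subset $A = \{e_1, e_2, e_3, e_4\} \subseteq \Ends(S)$ of cardinality $4$. Since $g_S = 0$, I would then choose a compact subsurface $\Sigma \subset S$ with $\Sigma \cong \Sigma_{0,4}$, boundary circles $c_1, \ldots, c_4$, and complementary components $D_i$ (of $S \smallsetminus \mathrm{int}(\Sigma)$, with $c_i \subset \partial D_i$) such that $D_i$ contains $e_i$ and no other element of $A$. The natural homomorphism $\Map(\Sigma) \to \Map_c(S) \subseteq \Map(S)$ produced by extending by the identity on $S \smallsetminus \Sigma$ supplies the candidate compactly supported classes.

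\textbf{Detection map.} The crux would be to construct a group homomorphism $\Phi \colon \Map(S) \to \Map(\Sigma_0^4)$, where $\Sigma_0^4$ is the $4$-punctured sphere with punctures labelled by $A$, fitting into a commutative triangle whose other leg is the standard capping quotient $\Map(\Sigma_{0,4}) \twoheadrightarrow \Map(\Sigma_0^4)$ that sends each boundary $c_i$ to the puncture $e_i$. The permutation component of $\Phi$, valued in $\mathrm{Sym}(A) \subseteq \Map(\Sigma_0^4)$, is just the tautological action on $A$ supplied by the TD hypothesis. For the pure-mapping-class component, the idea is to use shiftability of each one-ended infinite-type surface $D_i$ (whose unique TD end is $e_i$) to isotope any given $\varphi \in \Map(S)$ into canonical position relative to $\Sigma$, then cap each $D_i$ canonically to the puncture $e_i$.

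\textbf{Non-vanishing on homology.} I would then pick a non-peripheral simple closed curve $\gamma \subset \Sigma$ separating $\{c_1, c_2\}$ from $\{c_3, c_4\}$; its image under the capping quotient is the Dehn twist $T_{\bar\gamma}$ around a separating curve $\bar\gamma \subset \Sigma_0^4$ splitting $\{e_1, e_2\}$ from $\{e_3, e_4\}$, an element of $\PMap(\Sigma_0^4) \cong F_2$. The three separating Dehn twists of $\Sigma_0^4$ (one per $2$-$2$ partition of $A$) satisfy the ``lantern after capping'' relation $T_{xy} T_{xz} T_{yz} = 1$ in $F_2$, so in $H_1(F_2; \bZ) \cong \bZ^2$ they obey exactly one linear relation and each of them is non-zero. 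Running the Hochschild--Serre spectral sequence for $1 \to \PMap(\Sigma_0^4) \to \Map(\Sigma_0^4) \to \mathrm{Sym}(A) \to 1$, one checks that the $S_4$-coinvariants of $\bZ^2$ are $\bZ/3$, that the image of $[T_{\bar\gamma}]$ in these coinvariants is non-zero, and that the relevant $d_2$ differential vanishes for arithmetic reasons (its target $H_2(S_4;\bZ) = \bZ/2$ cannot receive non-trivial $3$-torsion). Hence $[T_{\bar\gamma}]$ survives to a non-zero element of $H_1(\Map(\Sigma_0^4); \bZ)$, so $\Phi_\ast[T_\gamma] \neq 0$, forcing $[T_\gamma] \neq 0$ in $H_1(\Map(S); \bZ)$; this is the required compactly supported non-zero class.

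\textbf{Main obstacle.} The hardest step is constructing $\Phi$ as a genuine group homomorphism, not merely a set-theoretic assignment on isotopy classes. The TD hypothesis supplies the canonical labelling of the $D_i$ by $A$, and shiftability of each $D_i$ supplies a canonical-position representative up to isotopy; the substantive work is to show that ``canonical position + capping'' is multiplicative under composition and independent of the chosen canonical-position isotopy. I expect this verification to make up the bulk of the argument, and to be precisely the place where the paper's twin techniques---shiftable subsurfaces and finite-type homological stability---must be combined.
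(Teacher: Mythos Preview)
Your overall strategy---build a detection homomorphism $\Phi\colon \Map(S)\to\Map(\Sigma_0^4)$ and show that a compactly supported Dehn twist survives on $H_1$---is exactly the paper's. But you have badly misjudged where the work lies.

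The construction of $\Phi$ is \emph{not} the hard part and does \emph{not} use shiftability or homological stability; those are the paper's tools for its \emph{vanishing} theorems, not for this non-vanishing one. Here $\Phi$ is a one-liner: since $g_S=0$, the Freudenthal compactification $\overline{S}$ is $\bS^2$; every self-homeomorphism of $S$ extends uniquely to $\overline{S}$; and since $A$ is topologically distinguished, the extension preserves $A$ setwise. Restricting to $\bS^2\smallsetminus A$ gives a continuous homomorphism $\Homeo(S)\to\Homeo(\bS^2\smallsetminus A)$ and hence $\Phi$ on $\pi_0$. No canonical-position isotopies, no shiftability of the $D_i$, no multiplicativity check beyond functoriality of Freudenthal compactification. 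Your proposed ``bulk of the argument'' evaporates.

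Your $H_1$ computation via the Lyndon--Hochschild--Serre spectral sequence for $\PMap(\Sigma_0^4)\cong F_2$ is correct but heavier than necessary. The paper instead uses the central extension $1\to\bZ/2\to B_n(\bS^2)\to\Map(\bS^2\smallsetminus A)\to 1$ together with $B_n(\bS^2)^{\mathrm{ab}}\cong\bZ/(2n-2)$: for $n=4$ this gives a surjection $\Map(\bS^2\smallsetminus A)\twoheadrightarrow\bZ/6$, and one checks directly that the image of $\Map(\Sigma_{0,4})$ is the index-$2$ subgroup $\bZ/3$ (elements of $\Map(\Sigma_{0,4})$ fix the boundary circles, hence induce even permutations of $A$). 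This recovers your $\bZ/3$ coinvariants without the spectral sequence bookkeeping, and yields the same non-zero compactly supported class.
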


If $\Ends(S)$ is uncountable (and $g_S = 0$) we do not have any further answers to Questions \ref{q-compact}--\ref{q-finite-type}. However, if $\Ends(S)$ is countable -- and is therefore homeomorphic to $O(n,\alpha)$ for some $n$ and $\alpha$ by the discussion above -- we may go further.
Let us therefore assume that $g_S = 0$ and $\Ends(S) \cong O(n,\alpha)$ for a positive integer $n$ and countable ordinal $\alpha$. We first observe that, if $n\geq 4$, Questions \ref{q-compact}--\ref{q-finite-type} are all answered positively by Theorem \ref{mainthm-genus-zero-infinite-punctures-1}, since $O(n,\alpha)$ is $\text{TD}_{\geq 4}$ by Example \ref{eg:TDn}. It therefore remains to consider $n \in \{1,2,3\}$. Our second result in the setting $(g_S,p_S) = (0,\infty)$ provides the (opposite) answer in the case $n=1$.

\begin{athm}
\label{mainthm-genus-zero-infinite-punctures-2}
Suppose that $g_S = 0$ and that $\Ends(S) \cong O(1,\alpha) = [0,\omega^\alpha]$. Then $\Map_f(S) \subset \Map(S)$ induces the zero map on homology with any field coefficients.
\end{athm}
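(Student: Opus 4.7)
The approach will be a Mather-style swindle, analogous to the proof of Theorem \ref{mainthm-LochNess}, but leveraging the self-similarity of the end space $[0,\omega^\alpha]$ near its unique maximum $\omega^\alpha$ rather than the simpler one-ended self-similarity of the Loch Ness monster. First I would reduce to the claim that, for every properly-embedded finite-type subsurface $\Sigma \subseteq S$, the homomorphism $\iota_\Sigma : \Map(\Sigma) \to \Map(S)$ given by extension by the identity is zero on $H_{>0}(-;K)$. This uses Lemma \ref{lem:reformulation} together with the fact that homology commutes with colimits, so that $H_*(\Map_f(S);K) \cong \mathrm{colim}_{\Sigma \in \fF(S)} H_*(\Map(\Sigma);K)$.

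The geometric heart of the proof is to construct, for any fixed $\Sigma \in \fF(S)$, a \emph{shiftable} structure: pairwise disjoint properly-embedded subsurfaces $\Sigma_1 = \Sigma, \Sigma_2, \Sigma_3, \ldots \subset S$, each homeomorphic to $\Sigma$ and accumulating only at the top end $\omega^\alpha$, together with a mapping class $\sigma \in \Map(S)$ satisfying $\sigma(\Sigma_i) = \Sigma_{i+1}$ for all $i \geq 1$. The key input is the additive indecomposability of $\omega^\alpha$ (namely $\beta + \omega^\alpha = \omega^\alpha$ for all $\beta < \omega^\alpha$), which implies that every clopen neighborhood of $\omega^\alpha$ in $[0,\omega^\alpha]$ is order-isomorphic, hence homeomorphic, to $[0,\omega^\alpha]$ itself. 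Translated through the classification of second-countable surfaces by end data, this says that every neighborhood of the top end of $S$ contains a subsurface homeomorphic to $S$; iterating this self-similarity produces the chain of disjoint copies of $\Sigma$, and the shift $\sigma$ is obtained by choosing the copies inside a standard tubular neighborhood of a proper ray converging to $\omega^\alpha$ and translating along this tube (extended by the identity outside).

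With the shiftable structure in hand, I would execute the swindle on homology. Let $\Phi_\infty : \Map(\Sigma) \to \Map(S)$ send $\varphi$ to the mapping class acting by $\varphi$ simultaneously on each $\Sigma_i$; this is well-defined because $\{\Sigma_i\}$ is locally finite. Writing $\iota_1 = \iota_{\Sigma_1} = \iota_\Sigma$, the identity $\Phi_\infty(\varphi) = \iota_1(\varphi) \cdot \sigma \Phi_\infty(\varphi) \sigma^{-1}$ holds in $\Map(S)$, and its two right-hand factors commute because their supports are disjoint. Since inner automorphisms act trivially on group homology, passing to $H_*(-;K)$ and using the K\"unneth isomorphism (for which field coefficients are essential) gives the identity $\Phi_{\infty*} = \mu_* \circ (\iota_{1*} \otimes \Phi_{\infty*}) \circ \Delta_*$ of maps $H_*(\Map(\Sigma);K) \to H_*(\Map(S);K)$, where $\Delta_*$ is the diagonal coproduct and $\mu_*$ is the Pontryagin product. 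Writing $\Delta_*(x) = x \otimes 1 + 1 \otimes x + \sum x'_i \otimes x''_i$ with $|x'_i|, |x''_i| \geq 1$, this relation rearranges to $\iota_{1*}(x) = - \sum \iota_{1*}(x'_i) \cdot \Phi_{\infty*}(x''_i)$; an induction on $|x|$ (base case $|x|=1$, where the sum is empty) kills $\iota_{1*}(x) = \iota_{\Sigma *}(x)$ in every positive degree, as required.

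The hard part is the geometric construction of the shiftable structure: a general $\Sigma \in \fF(S)$ may have boundary components and punctures scattered across $\Ends(S)$, not all concentrated near the top end. One navigates this by first enlarging $\Sigma$ to some $\Sigma' \supseteq \Sigma$ whose complement $S \smallsetminus \Sigma'$ has a distinguished component $C$ with end space homeomorphic to $[0,\omega^\alpha]$ (absorbing all parasitic components away from the top end into $\Sigma'$), and then building the chain $\Sigma'_i$ and the shift $\sigma$ entirely inside $C$. It is also worth noting that this argument does not apparently upgrade to integral coefficients (cf.\ Remark \ref{rmk:from-fields-to-Z}), since both the K\"unneth isomorphism and the cocommutative Hopf algebra structure on $H_*(-;K)$ are essential for the induction to close.
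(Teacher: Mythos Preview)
Your geometric strategy---enlarging a given finite-type $\Sigma$ to a one-boundary subsurface $\Sigma'$ whose complement is again $\Sigma^\circ(\alpha)$, and then exhibiting $\Sigma'$ as shiftable via the additive indecomposability of $\omega^\alpha$---is correct and is essentially the paper's approach. The paper splits into cases: for a successor $\alpha=\beta+1$ it identifies $\Sigma(\alpha)$ directly with the grid surface $\Gr_\bZ(\Sigma^\circ(\beta))$ and applies Proposition~\ref{prop:grid-surface} after a ``zoom-out'', while for a limit ordinal it exhausts $\Sigma(\lambda)$ by shiftable subsurfaces $S_n\cong\Sigma^\circ(\alpha_n)$ and invokes Corollary~\ref{coro:shiftable}. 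Your uniform treatment covers both cases at once, which is a pleasant simplification.

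The genuine gap is in the homological swindle. You write $\Phi_{\infty*}=\mu_*\circ(\iota_{1*}\otimes\Phi_{\infty*})\circ\Delta_*$ with $\mu_*$ a ``Pontryagin product'', but $H_*(\Map(S);K)$ carries no such product: $B\Map(S)$ is not an $H$-space. What you actually have is the homomorphism $F\colon\Map(\Sigma)^2\to\Map(S)$ given by $(\varphi,\psi)\mapsto\iota_1(\varphi)\cdot c_\sigma\Phi_\infty(\psi)$ and the identity $\Phi_{\infty*}=F_*\circ\Delta_*$. Expanding gives $\iota_{1*}(x)=-\sum F_*(x'_i\otimes x''_i)$; but $F$ factors (via K\"unneth) through $H_*(G_1;K)\otimes H_*(G_2;K)$ with $G_1$ the image of $\iota_1$, so killing the cross terms would require $\iota_{1*}(x'_i)=0$ in $H_*(G_1;K)$, not merely in $H_*(\Map(S);K)$. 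Since $\iota_1$ may be taken injective, $G_1\cong\Map(\Sigma)$ and the corestriction is an isomorphism, so your induction collapses above degree~$1$. This is precisely why the paper needs the two-dimensional grid in Proposition~\ref{prop:grid-surface}: there the commuting product factors through $\Map(\Gr_{\geq n+1}(\Sigma))\times\Map(\Gr_n(\Sigma))$, and the inductive hypothesis (stated for \emph{all} $n$) already applies to the first factor. Once you have established that $\Sigma'$ is shiftable in the sense of Definition~\ref{def:shiftable}, you should invoke Corollary~\ref{coro:shiftable} (which passes through $\Str(\Sigma')\cong\Gr(\Sigma')$ and the grid argument) rather than attempt the one-dimensional swindle directly in $\Map(S)$.
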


The special case when $\alpha = 1$ corresponds to the \emph{flute surface}, which is the plane minus a countable discrete subset (for example it may be modelled concretely as $\bR^2 \smallsetminus \bZ^2$). Theorem \ref{mainthm-genus-zero-infinite-punctures-2} therefore includes the following special case, which we highlight as a corollary.

\begin{acor}
\label{maincor-flute-surface}
For any field $K$, the homology $H_*(\Map(\bR^2 \smallsetminus \bZ^2);K)$ does not contain any non-zero classes that admit compact support, or even support of finite type.
\end{acor}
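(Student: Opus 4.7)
The plan is to deduce this corollary directly from Theorem~\ref{mainthm-genus-zero-infinite-punctures-2} by recognising the flute surface $S = \bR^2 \smallsetminus \bZ^2$ as a genus-zero surface whose end space is homeomorphic to $[0,\omega] = [0,\omega^1]$, so that the theorem applies with $\alpha = 1$.

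First I would verify that $S$ has genus zero: $S$ embeds in $\bR^2 \subset S^2$, so it is planar and therefore admits no embedded copy of $\Sigma_{1,1}$, giving $g_S = 0$. Next I would identify $\Ends(S)$ concretely. Exhaust $S$ by the compact subsurfaces
\[
K_n \;:=\; \{x \in \bR^2 : |x| \leq n\} \;\smallsetminus\; \bigcup_{p \in \bZ^2,\,|p| \leq n} B_{1/3}(p),
\]
noting that the complementary components of $K_n$ in $S$ consist of one small punctured disc around each lattice point with $|p| \leq n$, together with a single unbounded component. Passing to the inverse limit, the punctured-disc components yield a countable discrete set of isolated ends (one per point of $\bZ^2$), while the unbounded complements yield one further end, every neighbourhood of which contains all but finitely many of the puncture-ends. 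Thus $\Ends(S)$ is the one-point compactification of the countable discrete space $\bZ^2$, which is homeomorphic to $[0,\omega]$ (the isolated points corresponding to the removed lattice points and the limit point to the end at infinity).

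With these two facts in hand, Theorem~\ref{mainthm-genus-zero-infinite-punctures-2} applies with $\alpha = 1$ and tells us that the inclusion $\Map_f(S) \subset \Map(S)$ induces the zero map on homology with any field coefficients. Because $\Map_c(S) \subseteq \Map_f(S)$ by Definition~\ref{defn:restricting-support}, the same is true of $H_*(\Map_c(S);K) \to H_*(\Map(S);K)$. By Lemma~\ref{lem:reformulation}, which reformulates Questions~\ref{q-finite-type-pure} and~\ref{q-finite-type} in terms of these two subgroups, no non-zero class in $H_*(\Map(S);K)$ admits a compact, or even finite-type, support, yielding the corollary.

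Since the main theorem does all the work, the only substantive step is the identification of $\Ends(\bR^2 \smallsetminus \bZ^2)$, which is an elementary point-set computation; I do not expect any real obstacle.
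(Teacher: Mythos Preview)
Your proposal is correct and follows exactly the paper's approach: the paper simply observes that the flute surface $\bR^2 \smallsetminus \bZ^2$ is the genus-zero surface with end space $[0,\omega] = [0,\omega^1]$, so Corollary~\ref{maincor-flute-surface} is the special case $\alpha = 1$ of Theorem~\ref{mainthm-genus-zero-infinite-punctures-2}. Your added verification that $g_S = 0$ and that $\Ends(S)$ is the one-point compactification of $\bZ^2$ (hence homeomorphic to $[0,\omega]$) just makes explicit what the paper leaves implicit.
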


By contrast, we note that the (integral) homology of $\Map(\bR^2 \smallsetminus \bZ^2)$ is very large: it is uncountable in every positive degree, by \cite[Theorem B]{PalmerWu2}. More generally, \cite[Theorem B]{PalmerWu2} implies the same statement about the integral homology of $\Map(S)$ whenever $g_S = 0$ and $\Ends(S) \cong O(1,\alpha)$ for a countable \emph{successor} ordinal $\alpha$. (Whether $\alpha$ is a successor or a limit ordinal is an important qualitative difference in the topology of $S$, and indeed the proof of Theorem \ref{mainthm-genus-zero-infinite-punctures-2} is different in these two cases.)

The remaining cases (in the setting $(g_S,p_S) = (0,\infty)$ and for countable $\Ends(S)$) are $n \in \{2,3\}$. For these two cases, we believe that the case $n=2$ will behave as in Theorem \ref{mainthm-genus-zero-infinite-punctures-2} whereas the case $n=3$ will behave as in Theorem \ref{mainthm-genus-zero-infinite-punctures-1}.

\subsection*{Outline.}

After recollections about infinite-type surfaces and their end-spaces in \S\ref{s:surfaces}, the organisation of the proofs of Theorems \ref{mainthm-infinite-genus}--\ref{mainthm-genus-zero-infinite-punctures-2} is explained in \S\ref{s:outline}. We prove our vanishing results in \S\ref{s:shiftable}--\S\ref{s:Cantor-tree-punctured}, with the core argument in most cases being Proposition \ref{prop:grid-surface} in \S\ref{s:shiftable}, and we prove our non-vanishing results in \S\ref{s:non-trivial-classes}--\S\ref{s:BT-construction}.

\subsection*{Acknowledgements.}

MP was partially supported by a grant of the Romanian Ministry of Education and Research, CNCS - UEFISCDI, project number PN-III-P4-ID-PCE-2020-2798, within PNCDI III, as well as by grants of the Ministry of Research, Innovation and Digitization, CNCS - UEFISCDI, project numbers PN-IV-P1-PCE-2023-2001 and PN-IV-P2-2.1-TE-2023-2040, within PNCDI IV.

XW is currently a member of LMNS and supported by NSFC 12326601. Part of this work was done when he was visiting ICMAT and he thanks Javier Aramayona and his group for the warm hospitality. He also thanks Francesco Fournier-Facio for discussions related to acyclic spaces and suggesting that Proposition \ref{prop:grid-surface} might be proved using \cite[Proposition 4.1]{BaumslagDyerHeller1980}, which led to Remark \ref{rem-pseudo-mitosis-pf}.

We thank Oscar Randal-Williams for pointing out a way to simplify our arguments in \S\ref{s:Cantor-tree-punctured}. We thank N\'estor Col\'in Hern\'andez, George Raptis and Rita Jim\'enez Rolland for each independently asking us the question of whether the dual Miller--Morita--Mumford classes vanish in the homology of the mapping class group of the Loch Ness monster surface. By Theorem \ref{mainthm-LochNess}, the answer to their question is \emph{yes}. Finally, we thank the anonymous referee for helpful suggestions and corrections.

\section{Infinite-type surfaces and their end-spaces}
\label{s:surfaces}

\subsection{Surfaces.}
\label{ss:classification}

Throughout this paper, all surfaces are assumed to be second countable, connected, orientable and to have compact boundary. A surface $S$ has \emph{finite type} if its fundamental group is finitely generated; otherwise it has \emph{infinite type}. The classification of surfaces is due to von Ker\'ekj\'art\'o \cite{vKer23} and Richards \cite{Ri63}, and crucially involves the \emph{end-space} $\Ends(S)$ of a surface $S$, which is by definition the boundary of the \emph{Freudenthal compactification} $\overline{S}$ of $S$ (see for example \cite[\S 2.1]{PalmerWu2} for more details) and is always homeomorphic to a closed subset of the Cantor set $\cC$. An end of $S$ is \emph{planar} if it has a neighbourhood in $\overline{S}$ that embeds into the plane; otherwise it is \emph{non-planar}. The (closed) subspace of non-planar ends is denoted by $\Ends_{np}(S) \subseteq \Ends(S)$.

\begin{thm}[{\cite[Theorems 1 and 2]{Ri63}}]
\label{thm:clas-inf-sur}
Let $S_1,S_2$ be two surfaces of genera $g_1,g_2 \in \bN \cup \{\infty\}$ with $b_1,b_2 \in \bN$ boundary components respectively. They are homeomorphic if and only if $g_1 = g_2$, $b_1 = b_2$ and there is a homeomorphism of pairs of spaces
\[
(\Ends(S_1), \Ends_{np}(S_1)) \;\cong\; (\Ends(S_2), \Ends_{np}(S_2)).
\]
Conversely, given any tuple $(g,b,Y,X)$, where $g \in \bN \cup \{\infty\}$, $b \in \bN$ and $X \subseteq Y \subseteq \cC$ is a nested pair of closed subsets of the Cantor set $\cC$, subject to the condition that $g = \infty$ if and only if $X \neq \varnothing$, there exists a surface $S$ of genus $g$ with $b$ boundary components such that $(\Ends(S),\Ends_{np}(S)) \cong (Y,X)$.
\end{thm}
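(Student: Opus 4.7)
The plan is to split the theorem into two halves: first, verifying that the four-tuple $(g_S, b_S, \Ends(S), \Ends_{np}(S))$ is a topological invariant of $S$, and second, both constructing a surface realising each admissible tuple and proving that these invariants determine $S$ up to homeomorphism.

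For the forward implication, each piece of the tuple is defined in a purely intrinsic way. The genus $g_S$ in the sense of Definition \ref{defn:g} is visibly a homeomorphism invariant, and the set $\pi_0(\partial S)$ of boundary components is as well. The end-space $\Ends(S)$ is the boundary of the Freudenthal compactification $\overline{S}$, which is functorial for proper maps; since any self-homeomorphism is proper, it extends to a homeomorphism of $\overline{S}$ carrying $\Ends(S_1)$ to $\Ends(S_2)$. Planarity of an end is detected by the existence of a neighbourhood in $\overline{S}$ that embeds in $\bR^2$, so $\Ends_{np}$ is likewise preserved.

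For existence, given $(g,b,Y,X)$ with $X \subseteq Y \subseteq \cC$ subject to $g = \infty \iff X \neq \varnothing$, I would start from the standard embedding $\cC \subset S^2$ and form the open planar surface $S^2 \smallsetminus Y$, which has end-space $Y$ with every end planar. To place non-planar ends exactly along $X$ (in the case $X \neq \varnothing$), for each $x \in X$ pick a null-sequence of pairwise disjoint closed disks in $S^2 \smallsetminus Y$ accumulating only at $x$, and replace each such disk by a once-punctured torus. A neighbourhood check confirms that the resulting surface has non-planar ends exactly the points of $X$ and planar ends $Y \smallsetminus X$. In the case $g < \infty$, instead attach $g$ handles in a compact interior region and leave all ends planar. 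Finally remove $b$ pairwise disjoint open disks to create the boundary components.

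For uniqueness, suppose $S_1, S_2$ have matching invariants and fix a homeomorphism of pairs $\varphi \colon (\Ends(S_1),\Ends_{np}(S_1)) \to (\Ends(S_2),\Ends_{np}(S_2))$. The strategy is a back-and-forth exhaustion argument. Choose compact exhaustions $K_1^{(i)} \subseteq K_2^{(i)} \subseteq \cdots$ of each $S_i$ by connected bordered subsurfaces whose complement has only finitely many connected components, each complementary component $C$ corresponding to a clopen subset $U_C \subseteq \Ends(S_i)$. Refining the two exhaustions in an alternating (back-and-forth) manner, I would arrange that $\varphi$ induces a bijection between the complementary components of $K_n^{(1)}$ and those of $K_n^{(2)}$ with matching numbers of boundary circles, matching finite genus at each stage, and matching planarity structure on the corresponding clopen pieces of end-space. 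The classification of compact bordered surfaces then provides homeomorphisms $K_n^{(1)} \cong K_n^{(2)}$ extending the previous stage, and their colimit gives $S_1 \cong S_2$. The main technical obstacle is the refinement step: at each new stage one must decompose a complementary component of possibly infinite genus into a finite-genus bordered piece (absorbed into the next level of the exhaustion) plus residual components whose genus/planarity structure still matches $\varphi$ on the corresponding clopen end-set. Careful placement of separating curves, exploiting that every non-planar end has arbitrarily small neighbourhoods of infinite genus together with the biconditional $g_S = \infty \iff \Ends_{np}(S) \neq \varnothing$, is what makes this refinement possible and is the heart of Richards' original argument.
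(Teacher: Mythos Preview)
The paper does not give its own proof of this statement: it is stated as a theorem of von Ker\'ekj\'art\'o and Richards with a citation to \cite[Theorems 1 and 2]{Ri63}, and is then used as a black box throughout. So there is no ``paper's own proof'' to compare against.

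That said, your sketch is a reasonable outline of Richards' original argument and is not off-track. A few comments on where it would need tightening if you were to carry it out in full. In the existence step, your handle-attachment along a null-sequence accumulating at each $x \in X$ must be organised so that the attachments for different points of $X$ are pairwise disjoint and so that the resulting surface is second countable; this is straightforward but does require choosing the disks with some care (Richards in \cite[\S 5]{Ri63} does this systematically). In the uniqueness step, the ``refinement'' you allude to is indeed the crux, and your description is correct in spirit but elides the main difficulty: when matching complementary components at stage $n$, one must simultaneously control the number of boundary circles, the finite genus absorbed, \emph{and} the induced clopen partition of the end-space so that it is compatible with $\varphi$. Richards handles this by working with what he calls ``ideal boundary components'' and proving a separate lemma that any two compact bordered subsurfaces inducing the same clopen partition of the ends (and with the same genus and boundary data) are ambiently equivalent. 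Your sketch gestures at this but does not isolate it as a lemma, which makes the inductive step look simpler than it is.
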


\subsection{End-spaces.}
\label{ss:structure-of-end-spaces}

By Theorem \ref{thm:clas-inf-sur}, the possible end-spaces of surfaces are precisely the closed subsets of the Cantor set $\cC$; this motivates the following terminology.

\begin{defn}
A space $X$ is an \emph{end-space} if it is homeomorphic to a closed subset of the Cantor set $\cC$.
\end{defn}

An important result about the structure of end-spaces is the Cantor-Bendixson theorem, which we recall next.

\begin{defn}
Let $X$ be any space. The \emph{Cantor-Bendixson filtration} of $X$ is the transfinite descending filtration $X_\alpha$ of $X$ defined by $X_0 = X$, $X_{\alpha + 1}$ is obtained from $X_\alpha$ by discarding all isolated points (in other words $X_{\alpha + 1}$ is the \emph{derived set} of $X_\alpha$) and $X_\lambda = \bigcap_{\alpha < \lambda} X_\alpha$ for limit ordinals $\lambda$. For cardinality reasons, there is always some $\alpha$ such that $X_{\alpha + 1} = X_\alpha$, in other words $X_\alpha$ has no isolated points. The \emph{Cantor-Bendixson rank} $\lvert X \rvert_{\mathrm{CB}}$ of a space $X$ is the smallest $\alpha$ such that $X_{\alpha + 1} = X_\alpha$; this subspace is called the \emph{perfect kernel} $\kappa(X)$ of $X$. For a point $x \in X \smallsetminus \kappa(X)$, its \emph{Cantor-Bendixson rank} $\mathrm{CB}_X(x)$ is the smallest $\alpha$ for which $x \notin X_\alpha$. Thus we have $\lvert X \rvert_{\mathrm{CB}} = \mathrm{sup}\{ \mathrm{CB}_X(x) \mid x \in X \smallsetminus \kappa(X) \}$.
\end{defn}

\begin{thm}[Cantor-Bendixson]
\label{thm:CB}
If $X$ is a Polish space, i.e.~it is separable and completely metrisable, then its Cantor-Bendixson rank $\lvert X \rvert_{\mathrm{CB}}$ is countable.
\end{thm}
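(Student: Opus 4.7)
The plan is to exploit the fact that every Polish space has a countable base and to show that each strict decrease in the Cantor--Bendixson filtration ``consumes'' a distinct basis element. Fix a Polish space $X$ and choose a countable base $\{U_n\}_{n \in \bN}$ for its topology (this exists since $X$ is separable and metrisable). Let $\alpha = \lvert X \rvert_{\mathrm{CB}}$, which a priori is some ordinal, and I want to show $\alpha < \omega_1$.

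First I would observe that by minimality of $\alpha$, the filtration strictly decreases at every stage strictly before $\alpha$: if we ever had $X_{\beta+1} = X_\beta$ for some $\beta < \alpha$, a straightforward transfinite induction would show $X_\gamma = X_\beta$ for all $\gamma \geq \beta$, contradicting the definition of the rank. In particular, for every $\beta < \alpha$ we may choose an isolated point $x_\beta \in X_\beta \smallsetminus X_{\beta+1}$, and then choose an index $n(\beta) \in \bN$ such that $U_{n(\beta)} \cap X_\beta = \{x_\beta\}$ (possible because $\{x_\beta\}$ is open in the subspace $X_\beta$ and the $U_n$ form a base for $X$).

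The key step is to verify that $\beta \mapsto n(\beta)$ is injective on the set of ordinals $\beta < \alpha$. Suppose $\beta < \gamma < \alpha$. Since $X_\gamma \subseteq X_{\beta+1}$, we have $U_{n(\beta)} \cap X_\gamma \subseteq U_{n(\beta)} \cap X_{\beta+1} = \varnothing$ (the last equality because $U_{n(\beta)} \cap X_\beta = \{x_\beta\}$ and $x_\beta \notin X_{\beta+1}$). On the other hand $U_{n(\gamma)} \cap X_\gamma = \{x_\gamma\} \neq \varnothing$, so $n(\beta) \neq n(\gamma)$. This injection into $\bN$ forces $\alpha$ to be a countable ordinal, completing the argument.

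I do not expect a substantive obstacle here; the only subtle point is making sure limit stages of the filtration do not cause trouble, but they are handled automatically by the injectivity argument above (no basis element needs to be assigned at a limit stage, and the strict-decrease observation only uses successor steps). One could alternatively package this as an argument that the scattered part $X \smallsetminus \kappa(X)$ is countable and then bound the rank by the supremum of ranks of its points, but the direct injection into the countable base is cleaner and avoids any appeal to the Cantor--Bendixson decomposition theorem itself.
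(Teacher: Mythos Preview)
Your argument is correct and is essentially the standard textbook proof of this classical result. Note, however, that the paper does not actually prove Theorem~\ref{thm:CB}: it is simply recalled as the Cantor--Bendixson theorem, with no proof given, and then used to derive Corollary~\ref{cor:uncountable-end-spaces}. So there is no ``paper's own proof'' to compare against; your write-up would serve perfectly well as a self-contained justification if one were desired.

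One small stylistic remark: your final paragraph mentions an alternative via counting the scattered part $X \smallsetminus \kappa(X)$ and then bounding the rank by the supremum of the pointwise ranks. That alternative is exactly how the paper uses the theorem in the proof of Corollary~\ref{cor:uncountable-end-spaces} (countably many points are removed at each of countably many stages), so the two viewpoints dovetail nicely; but as you say, the direct injection into the countable base is the cleanest way to bound the rank itself.
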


This applies in particular to all end-spaces, since they are Polish spaces. An immediate corollary is the following important structurul result about uncountable end-spaces.

\begin{cor}
\label{cor:uncountable-end-spaces}
Every uncountable end-space $X \subseteq \cC$ has a subspace homeomorphic to $\cC$ whose complement in $X$ is countable.
\end{cor}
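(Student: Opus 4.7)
The plan is to apply the Cantor–Bendixson decomposition $X = (X \smallsetminus \kappa(X)) \sqcup \kappa(X)$ and show that $\kappa(X)$ is homeomorphic to $\cC$ while its complement is countable. So the argument decomposes into two independent steps.

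First I would argue that $X \smallsetminus \kappa(X)$ is countable. By Theorem \ref{thm:CB}, the Cantor–Bendixson filtration $\{X_\beta\}$ stabilises at some countable ordinal $\alpha = \lvert X \rvert_{\mathrm{CB}}$, and we may write
\[
X \smallsetminus \kappa(X) \;=\; \bigsqcup_{\beta < \alpha} (X_\beta \smallsetminus X_{\beta+1}).
\]
Each $X_\beta$ is a closed subspace of $X \subseteq \cC$, hence second countable, hence separable; its subspace of isolated points $X_\beta \smallsetminus X_{\beta+1}$ is discrete in the subspace topology, so by separability it is at most countable. Since $\alpha$ is countable, the union $X \smallsetminus \kappa(X)$ is a countable union of countable sets, hence countable.

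Second I would identify $\kappa(X)$ with $\cC$. Since $X$ is uncountable and $X \smallsetminus \kappa(X)$ is countable, $\kappa(X)$ is non-empty (in fact uncountable). As a closed subspace of $\cC$, it is compact, metrisable and totally disconnected, and by definition of the perfect kernel it has no isolated points. Brouwer's characterisation of the Cantor set — every non-empty compact, metrisable, perfect, totally disconnected space is homeomorphic to $\cC$ — then yields $\kappa(X) \cong \cC$, completing the proof.

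The argument is essentially routine Cantor–Bendixson plus Brouwer; the only step that requires any real care is the countability of $X \smallsetminus \kappa(X)$, where one must remember that second countability (hence separability) of $X$ is inherited by each $X_\beta$ in order to conclude that each discrete layer $X_\beta \smallsetminus X_{\beta+1}$ is countable, and then combine this with the countability of $\alpha$ from Theorem \ref{thm:CB}.
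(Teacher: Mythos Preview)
Your proof is correct and follows essentially the same route as the paper: use the Cantor--Bendixson decomposition, argue that $X\smallsetminus\kappa(X)$ is countable (countably many steps, each removing countably many points), deduce that $\kappa(X)$ is non-empty, and then identify $\kappa(X)$ with $\cC$ via Brouwer's characterisation. The paper's version is terser, but the logic is identical.
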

\begin{proof}
At each step of the Cantor-Bendixson filtration of $X$ only countably many points are removed, so Theorem \ref{thm:CB} implies that $X \smallsetminus \kappa(X)$ is countable. Thus $\kappa(X)$ is non-empty, since $X$ is uncountable. So $\kappa(X)$ is a non-empty perfect subspace of $\cC$, which implies that it is homeomorphic to $\cC$.
\end{proof}

In particular, if $X$ is uncountable and has only finitely many isolated points, it is homeomorphic to $\cC \sqcup \{1,\ldots,p\}$ for some non-negative integer $p$.

\subsection{Countable end-spaces and ordinal intervals.}
\label{ss:countable-end-spaces}

Despite the structural result of Corollary \ref{cor:uncountable-end-spaces}, the structure of uncountable end-spaces may still be very complicated. In contrast, countable end-spaces are completely classified. It follows directly from the definitions that \emph{countable end-spaces} are the same as \emph{countable compact Hausdorff spaces}, and the latter are classified in terms of certain \emph{ordinal spaces}. We refer to \cite{Sierpinski1958} or \cite{Jech2003} for the basic notions of ordinals and ordinal arithmetic.

\begin{defn}
For an ordinal $\alpha$, the \emph{closed ordinal interval} $[0,\alpha]$ is the ordinal $\alpha + 1 = \{0,1,2,\ldots,\alpha\}$ equipped with the order topology. For an ordinal $\alpha$ and positive integer $n$, we write $O(n,\alpha)$ for the ordinal interval $[0,\omega^\alpha.n]$, equivalently the disjoint union of $n$ copies of the ordinal interval $[0,\omega^\alpha]$.
\end{defn}

\begin{rem}
\label{rmk:Onalpha}
The spaces $O(n,\alpha)$ are pairwise non-homeomorphic: they may be distinguished by the property that $O(n,\alpha)$ has exactly $n$ points of Cantor-Bendixson rank $\alpha + 1$ and no points of higher Cantor-Bendixson rank (so its Cantor-Bendixson rank as a space is also equal to $\alpha + 1$).
\end{rem}

Closed ordinal intervals are compact and Hausdorff. Conversely, we have:

\begin{thm}[\cite{MazurkiewiczSierpinski1920}]
\label{thm:MazurkiewiczSierpinski}
Every countable compact Hausdorff space is homeomorphic to $O(n,\alpha)$ for some (necessarily unique) positive integer $n$ and countable ordinal $\alpha$.
\end{thm}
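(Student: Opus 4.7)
\medskip

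\noindent\textbf{Proof proposal.}
The plan is to proceed by transfinite induction on the Cantor--Bendixson rank, first extracting the invariants $(n,\alpha)$ from any countable compact Hausdorff space $X$ and then showing that a space is determined up to homeomorphism by these invariants.

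First I would set up the invariants. Any countable compact Hausdorff space is second countable (a countable compact Hausdorff space is metrizable by Urysohn, with a countable dense set) and hence Polish, so Theorem \ref{thm:CB} applies. Moreover, $\kappa(X)=\varnothing$: a non-empty perfect subspace of a Polish space is uncountable (by the usual Cantor scheme argument producing an embedded copy of $\cC$), which would contradict countability. So the Cantor--Bendixson filtration $X_0\supseteq X_1\supseteq\cdots$ reaches the empty set at some countable ordinal. Let $\beta$ be the least ordinal with $X_\beta=\varnothing$. I would then show that $\beta$ is a successor: if $\beta$ were a limit, then $\{X_\gamma\}_{\gamma<\beta}$ would be a descending family of non-empty closed subsets of the compact space $X$, and the finite intersection property would give $X_\beta=\bigcap_{\gamma<\beta}X_\gamma\neq\varnothing$, a contradiction. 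Writing $\beta=\alpha+1$, the space $X_\alpha$ is non-empty, closed in $X$ (hence compact) and has no non-isolated points (since $X_{\alpha+1}=\varnothing$), so it is finite; let $n:=\lvert X_\alpha\rvert\geq 1$. Uniqueness of $(n,\alpha)$ is then immediate: they are read off from the intrinsic filtration, matching Remark \ref{rmk:Onalpha}.

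For existence $X\cong O(n,\alpha)$, I would induct on $\alpha$. First I would reduce to the case $n=1$. Since $X$ is a countable compact Hausdorff space, it is zero-dimensional (any countable Hausdorff space is totally disconnected, and for compact Hausdorff spaces total disconnectedness equals zero-dimensionality). Hence the finitely many distinct points $x_1,\dots,x_n\in X_\alpha$ can be separated by pairwise disjoint clopen sets $U_1,\dots,U_n$; by inductively absorbing the (clopen) complement $X\smallsetminus(U_1\cup\cdots\cup U_n)$ into one of the pieces, I may assume $X=U_1\sqcup\cdots\sqcup U_n$. Each $U_i$ is a clopen countable compact Hausdorff space whose CB filtration agrees with the induced one, so it has invariants $(1,\alpha)$. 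The claim thus reduces to showing that there is a unique such space up to homeomorphism, which (combined with the inductive identification of each $U_i$ with $O(1,\alpha)$) gives $X\cong O(n,\alpha)$ as the disjoint union.

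The main obstacle is the case $n=1$. Here I would argue by transfinite induction on $\alpha$. The base case $\alpha=0$ is trivial ($X$ is a one-point space). For the inductive step, let $x_\infty$ be the unique point of CB rank $\alpha+1$. By zero-dimensionality and first-countability at $x_\infty$, choose a strictly descending sequence of clopen neighbourhoods $V_1\supseteq V_2\supseteq\cdots$ with $\bigcap_i V_i=\{x_\infty\}$, and set $W_i:=V_i\smallsetminus V_{i+1}$, so that $X=\{x_\infty\}\sqcup\bigsqcup_{i\geq 1}W_i$ as a topological space (where a neighbourhood basis at $x_\infty$ is given by the tails $\{x_\infty\}\sqcup\bigsqcup_{j\geq i}W_j$). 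Each $W_i$ is clopen, countable compact Hausdorff with CB rank strictly less than $\alpha+1$, so by the outer induction $W_i\cong O(n_i,\alpha_i)$ with $\alpha_i<\alpha$. The heart of the argument is then to show that, by grouping and further subdividing the clopen pieces $W_i$ using the inductive classification, one can arrange the sequence $(n_i,\alpha_i)$ to match the canonical decomposition of $O(1,\alpha)$ (splitting along the Cantor normal form: in the successor case $\alpha=\gamma+1$ the pieces have $\alpha_i=\gamma$ for all sufficiently large $i$; in the limit case, $\alpha_i$ can be taken to be a cofinal sequence in $\alpha$), and that \emph{any} such decomposition is forced by the requirement that $x_\infty$ have CB rank exactly $\alpha+1$. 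A convenient way to make this last step watertight is a back-and-forth argument between $X$ and $O(1,\alpha)$ on their respective clopen decompositions, using the inductive hypothesis to identify matching pieces at each stage.
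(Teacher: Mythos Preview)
The paper does not give a proof of this theorem: it is stated with a citation to the original 1920 paper of Mazurkiewicz and Sierpi{\'n}ski and used as a black box (its role in the paper is purely as input to Lemma~\ref{lem:ordinal-interval-calculations} and the discussion of countable end-spaces). So there is no ``paper's own proof'' to compare against.

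That said, your outline is a correct sketch of the classical argument. The extraction of $(n,\alpha)$ via the Cantor--Bendixson filtration, the successor-ordinal argument for $\beta$, and the clopen reduction to $n=1$ are all fine (for the latter, note that since $X$ is compact and zero-dimensional you can in fact take the $U_i$ to partition $X$ outright, so no ``absorbing'' step is needed). In the inductive step, two points deserve slightly more care than you give them. First, in the successor case $\alpha=\gamma+1$, it is not automatic that every annulus $W_i$ meets $X_\gamma$; what is true is that infinitely many of them do (since $x_\infty$ is a limit of $X_\gamma$), so after merging consecutive annuli you can arrange $\alpha_i=\gamma$ for all $i$, then split each $O(n_i,\gamma)$ into $n_i$ copies of $[0,\omega^\gamma]$ and identify the one-point compactification with $[0,\omega^{\gamma+1}]$ (this is exactly the second bullet of Lemma~\ref{lem:ordinal-interval-calculations}). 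Second, in the limit case you should explicitly verify $\sup_i\alpha_i=\lambda$: if the supremum were some $\gamma<\lambda$, then $x_\infty$ would become isolated already in $X_{\gamma+1}$, contradicting $x_\infty\in X_\lambda$. After that the third bullet of Lemma~\ref{lem:ordinal-interval-calculations} finishes the identification. Your back-and-forth suggestion is a legitimate alternative that handles both cases at once and avoids these bookkeeping issues.
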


\begin{ex}
Any ordinal $\alpha$ has a unique \emph{Cantor normal form} $\alpha = \omega^{\beta_1}.n_1 + \cdots + \omega^{\beta_k}.n_k$ for positive integers $n_1,\ldots,n_k$ and ordinals $\beta_1 > \cdots > \beta_k$. In this case we have $[0,\alpha] \cong O(n_1,\beta_1)$.
\end{ex}

This classification, together with the Cantor-Bendixson filtration, may be used to calculate the results of various operations on closed ordinal intervals. We record here several of these that will be used later.

\begin{lem}
\label{lem:ordinal-interval-calculations}
We have the following identifications, where all ordinals are assumed to be countable.
\begin{itemize}[nosep]
\item Let $\alpha_1,\ldots,\alpha_n$ be a finite sequence of ordinals with unique maximum $\alpha_1$. Then the disjoint union $[0,\omega^{\alpha_1}] \sqcup \cdots \sqcup [0,\omega^{\alpha_n}]$ is homeomorphic to $[0,\omega^{\alpha_1}]$.
\item The one-point compactification of the disjoint union of countably infinitely many copies of $[0,\omega^\alpha]$ is homeomorphic to $[0,\omega^{\alpha+1}]$.
\item Let $\lambda$ be a limit ordinal and let $(\alpha_\beta)_{\beta < \delta}$ be a $\delta$-indexed sequence of smaller ordinals, for another ordinal $\delta$, whose supremum is $\lambda$. Then the one-point compactification of the disjoint union over all $\beta < \delta$ of $[0,\omega^{\alpha_\beta}]$ is homeomorphic to $[0,\omega^\lambda]$.
\end{itemize}
\end{lem}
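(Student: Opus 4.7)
The plan is to identify each space on the left with some $O(n, \alpha)$ via the Mazurkiewicz--Sierpi\'nski classification (Theorem~\ref{thm:MazurkiewiczSierpinski}). Each space in question is countable (all ordinals involved are countable), compact (either a finite disjoint union of compact spaces or a one-point compactification of a locally compact Hausdorff space), and Hausdorff; by Remark~\ref{rmk:Onalpha}, such a space is determined up to homeomorphism by its Cantor--Bendixson rank $\alpha + 1$ together with the number $n$ of points attaining that rank. The strategy is therefore, in each case, to compute these two invariants. For the first statement, the Cantor--Bendixson rank of a finite disjoint union is the maximum of the ranks of its summands, and the rank of each point is intrinsic to its own summand. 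Since $[0, \omega^{\alpha_1}]$ contributes a unique point of rank $\alpha_1 + 1$ while every other summand has all points of rank at most $\alpha_i + 1 < \alpha_1 + 1$, the disjoint union is $O(1, \alpha_1) = [0, \omega^{\alpha_1}]$.

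For the second and third statements, the main technical observation is the following: if $X = \bigsqcup_i X_i$ is a disjoint union of compact Hausdorff spaces with infinitely many non-empty summands, then the point at infinity of the one-point compactification $X^+$ is not isolated in $X^+$, and the derived set $(X^+)'$ with its subspace topology is homeomorphic to the one-point compactification of $\bigsqcup_i X_i'$. This follows from the fact that any compact subset of $\bigsqcup_i X_i$ is contained in a finite union of the $X_i$, so neighbourhoods of $\infty$ in $X^+$ restrict correctly to neighbourhoods of $\infty$ in $(\bigsqcup_i X_i')^+$. Iterating transfinitely (the limit step requires a short check that the subspace topology on $\bigcap_{\beta < \gamma}(X^+)^{(\beta)} \subseteq X^+$ matches the intrinsic one-point compactification topology on $\bigsqcup_i X_i^{(\gamma)}$), one obtains $(X^+)^{(\gamma)} \cong (\bigsqcup_i X_i^{(\gamma)})^+$ for every $\gamma$ at which infinitely many summands remain non-empty.

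Applying this with countably many copies of $X_i = [0, \omega^\alpha]$ (second statement): each summand reduces to the single point $\omega^\alpha$ after $\alpha$ derivations, so $(X^+)^{(\alpha)} \cong \bN^+$, and two further derivations yield $\varnothing$. Hence the Cantor--Bendixson rank is $\alpha + 2$ with $\infty$ as the unique top point, identifying $X^+$ with $[0, \omega^{\alpha+1}]$. For the third statement, with $X_\beta = [0, \omega^{\alpha_\beta}]$, since $\lambda$ is a limit ordinal with $\sup_\beta \alpha_\beta = \lambda$, for every $\gamma < \lambda$ infinitely many $\alpha_\beta$ satisfy $\alpha_\beta \geq \gamma$ (otherwise the supremum would be strictly less than $\lambda$), which keeps the induction valid through all stages below $\lambda$. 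Since $\alpha_\beta + 1 < \lambda$ for every $\beta$ (as $\lambda$ is a limit), every $X_\beta^{(\lambda)}$ is empty, so $(X^+)^{(\lambda)} = \{\infty\}$ and one more derivation gives $\varnothing$, yielding rank $\lambda + 1$ with $\infty$ as unique top point and hence $X^+ \cong [0, \omega^\lambda]$. The main technical obstacle will be verifying the key observation and its transfinite iteration; this amounts to topological bookkeeping but requires careful attention at limit stages of the induction.
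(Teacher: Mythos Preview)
Your proposal is correct and follows essentially the same approach as the paper: verify that each space is countable, compact and Hausdorff, then compute its Cantor--Bendixson invariants and invoke the Mazurkiewicz--Sierpi\'nski classification (Theorem~\ref{thm:MazurkiewiczSierpinski} and Remark~\ref{rmk:Onalpha}). The only presentational difference is that you package the analysis of the one-point compactification via an explicit ``commutation lemma'' $(X^+)^{(\gamma)} \cong \bigl(\bigsqcup_i X_i^{(\gamma)}\bigr)^+$ and iterate it transfinitely, whereas the paper argues the third case more directly by contradiction (if $\infty$ were isolated at some stage $\gamma < \lambda$, then all but finitely many components would already have vanished, contradicting $\sup_\beta \alpha_\beta = \lambda$); both routes compute the same thing.
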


\begin{rem}
\label{rmk:cofinality}
Recall that the \emph{cofinality} of an ordinal $\alpha$ is the smallest ordinal $\delta$ that admits a strictly increasing map $\delta \to \alpha$ whose image is cofinal. If $\lambda$ is a countable limit ordinal, its cofinality is $\omega = \bN$, so in this case there always exists an ordinary ($\bN$-indexed) sequence as in the third point of Lemma~\ref{lem:ordinal-interval-calculations}. We note however that the third point of Lemma \ref{lem:ordinal-interval-calculations} does not require the sequence to be strictly increasing.
\end{rem}

\begin{proof}[Proof of Lemma \ref{lem:ordinal-interval-calculations}]
In each case, the space under consideration is evidently compact, Hausdorff and countable; we shall study its Cantor-Bendixson filtration and then apply Theorem \ref{thm:MazurkiewiczSierpinski}. In the first case, since $\alpha_1$ is the unique maximum of $\alpha_1,\ldots,\alpha_n$, the $\alpha_1$-th term of the Cantor-Bendixson filtration is the single point $\omega^{\alpha_1} \in [0,\omega^{\alpha_1}]$. Thus the result follows from Theorem \ref{thm:MazurkiewiczSierpinski} and the characterisation of the spaces $O(n,\alpha)$ in Remark \ref{rmk:Onalpha}.

In the second case, the $\alpha$-th term of the Cantor-Bendixson filtration is the disjoint union of countably infinitely many copies of $\{\omega^\alpha\}$ together with the point at infinity. The point at infinity is therefore the unique point of (maximal) Cantor-Bendixson rank $\alpha + 2$.

In the third case, each component $[0,\omega^{\alpha_\beta}]$ of the disjoint union vanishes before the $\lambda$-th term of the Cantor-Bendixson filtration, since $\lambda > \alpha_\beta$. It will therefore suffice to prove that the point at infinity of the one-point compactification \emph{does} lie in the $\lambda$-th term of the Cantor-Bendixson filtration, since it will then follow that it is the unique point of (maximal) Cantor-Bendixson rank $\lambda + 1$. Suppose for a contradiction that the point at infinity of the one-point compactification does \emph{not} lie in the $\lambda$-th term of the Cantor-Bendixson filtration; it must therefore vanish when passing from the $\gamma$-th term to the $(\gamma + 1)$-st term of the Cantor-Bendixson filtration, for some $\gamma < \lambda$. This means that it is an isolated point in the $\gamma$-th term of the Cantor-Bendixson filtration. By definition of the one-point compactification, this can only occur if the space that it is compactifying is already compact, which means that all but finitely many of the components $[0,\omega^{\alpha_\beta}]$ of the disjoint union must have vanished already by the $\gamma$-th term of the Cantor-Bendixson filtration. However, the component $[0,\omega^{\alpha_\beta}]$ vanishes precisely at the $(\alpha_\beta + 1)$-st term, so this means that all but finitely many of the $\alpha_\beta$ are smaller than $\gamma$. But this contradicts the assumption that $\lambda$ is the supremum of the $\alpha_\beta$.
\end{proof}

\section{Preliminaries on finite-type and compact support; organisation of the proofs}
\label{s:outline}

For definiteness, let us first recall the definition of the mapping class group of a surface, as well as a basic construction that says essentially that it is functorial with respect to proper inclusions of surfaces.

\begin{defn}
\label{def:mcg}
For a surface $S$, its \emph{mapping class group} is $\Map(S) = \pi_0(\Homeo_\partial(S))$, the group of isotopy classes of homeomorphisms of $S$ that restrict to the identity on $\partial S$.
\end{defn}

\begin{lem}
\label{lem:extend-by-id}
If $\Sigma \subseteq S$ is a properly-embedded subsurface, there is a well-defined homomorphism
\begin{equation}
\label{eq:extend-by-id}
\iota \colon \Map(\Sigma) \longrightarrow \Map(S)
\end{equation}
given by extending homeomorphisms of $\Sigma$ by the identity on $\Map(S)$.
\end{lem}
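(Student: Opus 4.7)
The plan is to construct $\iota$ at the level of homeomorphisms and then descend to isotopy classes. Given $\varphi \in \Homeo_\partial(\Sigma)$, I would set
\[
\hat\varphi(x) = \begin{cases} \varphi(x) & \text{if } x \in \Sigma, \\ x & \text{if } x \in S \smallsetminus \Sigma, \end{cases}
\]
which is unambiguous because $\varphi$ fixes all of $\partial \Sigma$ pointwise -- in particular the frontier $\Sigma \cap \overline{S \smallsetminus \Sigma}$, which for a properly-embedded subsurface is contained in $\partial \Sigma$.

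The first step is to verify that $\hat\varphi$ is continuous. Since $\Sigma$ is properly embedded, it is closed in $S$, so $\Sigma$ and $\overline{S \smallsetminus \Sigma}$ form a closed cover of $S$. On each of these sets $\hat\varphi$ is continuous (equal to $\varphi$ and to $\mathrm{id}$ respectively), and the two definitions agree on the overlap, so the pasting lemma gives continuity. Applying the same construction to $\varphi^{-1}$ produces a two-sided inverse, so $\hat\varphi \in \Homeo_\partial(S)$, and the identity $\widehat{\varphi \psi} = \hat\varphi \hat\psi$ is immediate from the definition. Thus $\varphi \mapsto \hat\varphi$ is a group homomorphism $\Homeo_\partial(\Sigma) \to \Homeo_\partial(S)$.

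To pass to mapping class groups, I would show that isotopic homeomorphisms have isotopic extensions. An isotopy $H \colon \Sigma \times I \to \Sigma$ through elements of $\Homeo_\partial(\Sigma)$ extends levelwise by the same formula to a map $\hat H \colon S \times I \to S$, and its continuity follows from another application of the pasting lemma, this time to the closed cover $\Sigma \times I$, $\overline{S \smallsetminus \Sigma} \times I$ of $S \times I$. This makes the assignment $[\varphi] \mapsto [\hat\varphi]$ well-defined on $\pi_0$, yielding the desired homomorphism $\iota \colon \Map(\Sigma) \to \Map(S)$.

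The argument is essentially formal; the only real input is the closed-subset condition coming from proper embedding, which is exactly what makes the pasting lemma applicable. I do not anticipate a substantive obstacle -- the main thing to be careful about is the distinction between $\partial \Sigma$ and the topological frontier of $\Sigma$ in $S$, which coincide (away from $\partial S$) precisely because $\Sigma$ is a properly-embedded two-dimensional submanifold.
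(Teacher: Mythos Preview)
Your proof is correct and follows essentially the same approach as the paper: the key point in both is that proper embedding makes $\Sigma$ closed in $S$, so the topological frontier $\Sigma \cap \overline{S \smallsetminus \Sigma}$ lies in $\partial\Sigma$, where representatives of $\Map(\Sigma)$ are already the identity. Your version is more explicit --- spelling out the pasting lemma for continuity and the extension of isotopies --- while the paper's proof compresses these routine verifications into the single observation about the frontier.
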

\begin{proof}
To see that this is well-defined one just has to check that any homeomorphism representing an element of $\Map(\Sigma)$ is the identity on its topological boundary as a subset of $S$, which is $\Sigma \cap (\overline{S \smallsetminus \Sigma})$. The assumption that $\Sigma \subseteq S$ is a subsurface that is properly embedded -- equivalently: closed as a subset of $S$ -- implies that $\Sigma \cap (\overline{S \smallsetminus \Sigma})$ is contained in $\partial\Sigma$, the boundary of $\Sigma$ as an abstract surface. But by Definition~\ref{def:mcg}, homeomorphisms representing elements of $\Map(\Sigma)$ restrict to the identity on $\partial\Sigma$, hence in particular on $\Sigma \cap (\overline{S \smallsetminus \Sigma})$.
\end{proof}

Our first goal in this section is to prove Lemma \ref{lem:reformulation}, which we recall here.

\begin{lem}[Lemma~\ref{lem:reformulation}]
\label{lem:colimits}
The homomorphisms \eqref{eq:natural-maps} have the following properties:
    \begin{itemize}[nosep]
        \item $\PMap_\fF(S) \to \Map_\fF(S) \to \Map(S)$ are injective with images $\Map_c(S) \subseteq \Map_f(S) \subset \Map(S)$.
        \item $\Map_\fC(S) \to \PMap_\fF(S)$ is a central extension whose kernel is free abelian of rank $p_S$.
    \end{itemize}
\end{lem}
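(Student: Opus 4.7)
The plan is to handle the two bullet points in turn, using throughout that both colimits are filtered: $\fC(S)$ and $\fF(S)$ are directed under inclusion (the union of two such subsurfaces, after a small thickening to smooth corners, is again of the appropriate type). Consequently, an element of either colimit vanishes iff it already vanishes at some finite stage.

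For the image and injectivity claims the argument is largely standard. The image of $\Map_\fF(S) \to \Map(S)$ equals $\Map_f(S)$ directly from the definition of finite-type support. For $\PMap_\fF(S) \to \Map(S)$, the key point is that any pure mapping class of a finite-type surface $\Sigma$ admits a representative which is the identity on a punctured-disk neighborhood of each puncture (by the Alexander trick applied in each such neighborhood, together with the fact that the mapping class group of the punctured disk rel outer boundary is trivial); hence the image lands in $\Map_c(S)$, and the reverse containment is immediate from $\PMap(K) = \Map(K)$ for compact $K$. Injectivity of $\PMap_\fF(S) \hookrightarrow \Map_\fF(S)$ follows from level-wise subgroup inclusion. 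Injectivity of $\Map_\fF(S) \hookrightarrow \Map(S)$ uses the standard fact that $\Map(\Sigma') \to \Map(S)$ is injective for an essential properly-embedded finite-type subsurface $\Sigma'$; any $\Sigma \in \fF(S)$ sits inside such a $\Sigma'$ obtained by filling in any disk components of $S \smallsetminus \Sigma$.

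The central-extension statement requires more care. The key ingredient is the classical \emph{capping short exact sequence}, valid for any finite-type $\Sigma \in \fF(S)$ with $n = n(\Sigma)$ punctures:
\[
1 \longrightarrow \bZ^{n} \longrightarrow \Map(\Sigma^{\mathrm{cap}}) \longrightarrow \PMap(\Sigma) \longrightarrow 1,
\]
where $\Sigma^{\mathrm{cap}}$ is the compact surface obtained from $\Sigma$ by excising a small open disk neighborhood of each puncture, and the kernel is freely generated by the Dehn twists around the new boundary circles (each central in $\Map(\Sigma^{\mathrm{cap}})$ as a boundary twist). The capped surfaces $\{\Sigma^{\mathrm{cap}}\}_{\Sigma \in \fF(S)}$ all lie in $\fC(S)$ and form a cofinal subfamily (any $K \in \fC(S)$ already equals $K^{\mathrm{cap}}$, with $n(K) = 0$). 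Taking the filtered colimit of the capping sequences then produces the desired central extension $1 \to K \to \Map_\fC(S) \to \PMap_\fF(S) \to 1$ with $K$ abelian and central; to identify $K \cong \bZ^{p_S}$, the plan is to exhibit a natural bijection between the stable generators and $\cP(S)$, sending each puncture $p \in \cP(S)$ to the stable class $\tau_p$ of a Dehn twist around any capping circle encircling only $p$, with well-definedness via annular cobordisms in compact subsurfaces and freeness from the independence of disjoint boundary Dehn twists at each finite stage.

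The main obstacle is the injectivity half of this last bijection: ruling out additional contributions to $K$ from capping circles that encircle ends of $S$ which are \emph{not} punctures (either non-isolated or non-planar). For such a boundary circle one should pass to a larger $\Sigma'$ in which the corresponding Dehn twist is expressible as a product of Dehn twists around puncture-encircling capping circles together with interior (non-boundary) Dehn twists of $\Sigma'^{\mathrm{cap}}$, and verify that the latter do not lie in the kernel of the colimited capping sequence. This step relies on a careful analysis of the end-structure of $S$ using the classification of ends into isolated planar, isolated non-planar, and non-isolated classes, and on the chain-of-pants-type identities among Dehn twists around nested sub-curves.
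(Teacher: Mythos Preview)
Your injectivity argument for $\Map_\fF(S) \to \Map(S)$ has a genuine gap. The ``standard fact'' you invoke is not true as stated: even when $\Sigma' \subset S$ is essential (no disk complementary pieces), the map $\Map(\Sigma') \to \Map(S)$ can fail to be injective. By Proposition~\ref{prop:injectivity} (the infinite-type version of \cite[Theorem~3.18]{FaMa11}), its kernel is the central subgroup generated by Dehn twists around those boundary circles of $\Sigma'$ that bound \emph{once-punctured disks} in $S$. Filling in only the disk components of $S \smallsetminus \Sigma$ does nothing to eliminate such pieces. The paper's proof enlarges $\Sigma$ by absorbing \emph{all} finite-type complementary components, so that every remaining complementary piece has infinite type and in particular is not a once-punctured disk; you need the same strengthening.

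For the second bullet your organising principle --- taking a filtered colimit of the capping short exact sequences $1 \to \bZ^{n(\Sigma)} \to \Map(\Sigma^{\mathrm{cap}}) \to \PMap(\Sigma) \to 1$ --- is valid and slightly different from the paper, which instead identifies the kernel of $\Map_\fC(S) \to \Map(S)$ directly by applying Proposition~\ref{prop:injectivity} to a cofinal family of compact $\Sigma$ enlarged so that no complementary piece is compact. Your route is perhaps cleaner conceptually, but it needs the observation that $\Sigma \mapsto \Sigma^{\mathrm{cap}}$ can be made into a genuine order-preserving cofinal map $\fF(S) \to \fC(S)$, which requires once-and-for-all compatible choices of capping annuli; you gloss over this. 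More importantly, the ``main obstacle'' you flag is not an obstacle at all in your own setup: the capping circles of $\Sigma^{\mathrm{cap}}$ surround only the punctures of $\Sigma$, and since $\Sigma$ is properly embedded these are exactly the punctures of $S$ that lie in $\Sigma$ --- there are no capping circles around non-puncture ends to worry about. The original boundary circles of $\Sigma$ (which may indeed encircle non-puncture ends) survive to $\PMap(\Sigma)$ and hence do not contribute to the kernel. Once cofinality is set up, exactness of filtered colimits gives $K \cong \mathrm{colim}_\Sigma \bZ^{n(\Sigma)} \cong \bigoplus_{\cP(S)} \bZ$ with no further analysis needed.
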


(The remaining statements of Lemma \ref{lem:reformulation} follow immediately from these ones.) We will deduce this lemma from the following fact, which is well-known in the finite-type setting and generalises with no change to the infinite-type setting.

\begin{prop}
\label{prop:injectivity}
Let $S$ be an infinite-type surface with $\partial S = \varnothing$ and $\Sigma \subset S$ a properly-embedded finite-type subsurface. Assume that $\Sigma$ is not an annulus and that $S$ is obtained from $\Sigma$ by attaching $S_1,\ldots,S_b$ along the boundary-components $C_1,\ldots,C_b$ of $\Sigma$, where each $\partial S_i$ is a circle and none of the $S_i$ is a disc. Then the kernel of \eqref{eq:extend-by-id} is the central subgroup of $\Map(\Sigma)$ freely generated by those Dehn twists $T_{C_i}$ for which $S_i$ is a once-punctured disc.
\end{prop}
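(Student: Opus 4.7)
The plan is threefold: first, show each $T_{C_i}$ with $S_i$ a once-punctured disc lies in $\ker(\iota)$; second, show these Dehn twists generate a free central abelian subgroup of $\Map(\Sigma)$; third, show $\ker(\iota)$ contains nothing else.

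For the first step, $\iota(T_{C_i})$ --- the Dehn twist around $C_i$ viewed in $S$ --- may be represented by a homeomorphism $\sigma$ of $S$ supported in a thin collar of $C_i$ lying inside $S_i \cong D^2 \smallsetminus \{0\}$ (with $C_i = \partial D^2$). Extend $\sigma$ continuously across $0$ (where it is already the identity) to a homeomorphism $\bar\sigma$ of $D^2$ fixing $\partial D^2$ and $0$; the Alexander trick $H_t(z) = t\bar\sigma(z/t)$ for $|z| \le t$ (identity otherwise) provides an isotopy between $\bar\sigma$ and $\mathrm{id}_{D^2}$ with $H_t(0) = 0$ throughout, which restricts to an isotopy in $S$ from $\sigma$ to $\mathrm{id}_S$. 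The essential point is that the puncture of $S_i$ is an unmarked end of $S$, so isotopies need not fix it as a point. For the second step, each $T_{C_i}$ can be represented in an arbitrarily small collar of $C_i \subset \partial\Sigma$; by the collar neighbourhood theorem it is central in $\Map(\Sigma)$, and the $T_{C_i}$ pairwise commute since the $C_i$ are disjoint. Freeness is the standard statement that boundary Dehn twists on a finite-type non-annular surface generate $\bZ^b$ in the mapping class group.

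The main content is the third step. Let $\phi \in \ker(\iota)$ be represented by $\Phi : \Sigma \to \Sigma$ fixing a collar of $\partial\Sigma$ pointwise, and extend by the identity to $\tilde\Phi : S \to S$; fix an isotopy $F : S \times I \to S$ from $\tilde\Phi$ to $\mathrm{id}_S$. For each $i$, the trace $t \mapsto F_t(C_i)$ is a loop based at $C_i$ in the space $\cE_i$ of simple closed curves in $S$ isotopic to $C_i$. The classical analysis of such embedding spaces (Gramain, Hatcher) identifies $\pi_1(\cE_i; C_i)$ as being generated by the ``rotation of a thin collar of $C_i$'' --- the loop realised by an ambient Dehn twist $T_{C_i}$ --- and equal to $\bZ$ exactly when $C_i$ bounds a once-punctured disc or a disc in $S$, and trivial otherwise. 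In our setting, discs are excluded, so the non-trivial case occurs precisely when $S_i$ is a once-punctured disc. Let $k_i \in \bZ$ be the winding number of the trace, so $k_i = 0$ unless $S_i$ is a once-punctured disc. Then $\phi' := \phi \cdot \prod_i T_{C_i}^{-k_i} \in \ker(\iota)$ has an associated isotopy whose trace on each $C_i$ is null-homotopic; by the parametric isotopy extension theorem, this isotopy can be modified to be the identity on a collar of $\partial\Sigma$ throughout. Restricting to $\Sigma$ then yields an isotopy in $\Sigma$ from the representative of $\phi'$ to $\mathrm{id}_\Sigma$, so $\phi' = 1$ in $\Map(\Sigma)$ and $\phi = \prod_i T_{C_i}^{k_i}$.

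The \textbf{main obstacle} is the third step: identifying $\pi_1(\cE_i; C_i)$ for $C_i$ in the infinite-type surface $S$ and carrying out the subsequent isotopy modification. Both rely on classical surface-topological tools that require some care on the non-compact $S$. An alternative route would reduce this step to the finite-type version of the proposition (proven via capping and Birman exact sequences) through an exhaustion of $S$, but this requires an independent argument that the kernels stabilise along the exhaustion --- in other words, a weak precursor to Lemma~\ref{lem:colimits} --- which is the reason we favour the direct curve-space approach.
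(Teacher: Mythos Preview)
Your proposal is plausible but takes a genuinely different route from the paper. The paper's proof is essentially a two-line citation: the finite-type case is exactly \cite[Theorem~3.18]{FaMa11}, whose proof uses the Alexander method, and the Alexander method has been extended to infinite-type surfaces by Hern\'andez--Morales--Valdez \cite{HMV19}, so the same argument goes through verbatim for infinite-type $S$.

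Your first two steps are standard and fine. In the third step, the Gramain/Hatcher computation of $\pi_1(\cE_i)$ is established for compact surfaces; extending it to the infinite-type $S$ requires at least contractibility of the identity component of $\Homeo(S)$ (Yagasaki \cite{Yagasaki2000}, which the paper uses elsewhere) together with a fibration argument for the evaluation map at $C_i$. There is also a step you have elided in passing from ``each trace is individually null-homotopic'' to ``the isotopy can be straightened along all $C_i$ simultaneously'': this needs an induction in which, having already fixed $C_1,\ldots,C_{j-1}$, you cut along them and recompute $\pi_1$ of the embedding space for $C_j$ in the resulting (still infinite-type) surface. None of this is insurmountable, and you rightly flag that care is required on the non-compact $S$; but once these details are filled in you will have essentially reproduced the relevant special case of the Alexander method for big surfaces. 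The paper's approach buys brevity by outsourcing exactly that work to \cite{HMV19}; yours would buy self-containment at the cost of length.
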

\begin{proof}
If $S$ were instead a \emph{finite}-type surface then this would be a special case of \cite[Theorem~3.18]{FaMa11}, which is proven using the Alexander method. (Note that we assume stronger hypotheses than \cite[Theorem~3.18]{FaMa11}, in that we require each component $S_i$ of the closure of the complement $\overline{S \smallsetminus \Sigma}$ to have a single boundary-component.) In our setting, exactly the same proof goes through, using the fact that the Alexander method is valid also for infinite-type surfaces, as proven in \cite{HMV19}.
\end{proof}

\begin{proof}[Proof of Lemma \ref{lem:colimits}]
The statement that $\PMap_\fF(S) \to \Map_\fF(S)$ is injective is obvious, since $\PMap(\Sigma) \subset \Map(\Sigma)$ is injective and we are taking a colimit over the same poset $\fF(S)$ on each side.

To prove that $\Map_\fF(S) \to \Map(S)$ is injective it will suffice, by general properties of colimits, to show that there is a cofinal family of $\Sigma \in \fF(S)$ such that $\Map(\Sigma) \to \Map(S)$ is injective. Let $\Sigma \subset S$ be any properly-embedded finite-type subsurface with $b$ boundary-components $C_1,\ldots,C_b$. We may enlarge it if necessary to ensure that each $C_i$ is a separating curve of $S$. Denote the connected components of (the closure of) $S \smallsetminus \Sigma$ by $S_1,\ldots,S_b$. We now enlarge $\Sigma$ further by taking its union with those $S_i$ that are of finite type (if any). Finally, we may enlarge $\Sigma$ if necessary to ensure that it is not an annulus, by increasing its genus (if $S$ has positive genus) or increasing $b$ (if $S$ has genus zero, in which case it must have infinitely many ends). We are now in the setting of Proposition \ref{prop:injectivity}, which implies that $\Map(\Sigma) \to \Map(S)$ is injective since none of the $S_i$ is a once-punctured disc (indeed, we have ensured that none of the $S_i$ is of finite type). By construction, the $\Sigma \in \fF(S)$ for which we have proven this form a cofinal family in $\fF(S)$, so the result follows.

It is clear by construction that we have $\image(\Map_\fF(S) \to \Map(S)) = \Map_f(S)$ and that
\begin{equation}
\label{eq:clear-inclusion}
\image(\PMap_\fF(S) \to \Map(S)) \supseteq \Map_c(S)
\end{equation}
since compact surfaces are of finite type. What is slightly less clear is the converse of the inclusion \eqref{eq:clear-inclusion}. To see this, suppose that $\varphi \in \Homeo(S)$ represents an element in the image of $\PMap_\fF(S) \to \Map(S)$, so we may assume that it has support contained in some finite-type subsurface $\Sigma \subset S$ and the punctures of $\Sigma$ are fixed pointwise by $\varphi$. Denote by $\Sigma' \subset \Sigma$ a compact subsurface obtained by removing a small open annular neighbourhood of each puncture of $\Sigma$. Since $\varphi$ fixes the punctures of $\Sigma$ pointwise, we may modify it by an isotopy to have support contained in $\Sigma'$, and hence $[\varphi] \in \Map_c(S)$.
This completes the proof of the first point of the lemma.

By general properties of colimits, in order to prove that $\Map_\fC(S) \to \PMap_\fF(S)$ is surjective it suffices to prove that, for any $\Sigma \in \fF(S)$, there exists $\Sigma' \in \fC(S)$ with $\Sigma' \subseteq \Sigma$ such that $\Map(\Sigma') \to \PMap(\Sigma)$ is surjective. The argument in the previous paragraph proves exactly this.

To complete the proof of the second point of the lemma, it now just remains to identify the kernel of $\Map_\fC(S) \to \PMap_\fF(S)$. Since we already know that $\PMap_\fF(S) \to \Map(S)$ is injective, this is the same as the kernel of $\Map_\fC(S) \to \Map(S)$. To identify this, we use Proposition \ref{prop:injectivity} again. Let $\Sigma \subset S$ be any compact subsurface with $b$ boundary-components $C_1,\ldots,C_b$. As before, we may enlarge it if necessary to ensure that each $C_i$ is a separating curve of $S$ and denote the connected components of (the closure of) $S \smallsetminus \Sigma$ by $S_1,\ldots,S_b$. We may enlarge $\Sigma$ by taking its union with those $S_i$ that are compact (if any), and ensure that $\Sigma$ is not an annulus (as before). After doing this, none of the $S_i$ are discs (since we have arranged that none of them are compact) so we are in the setting of Proposition \ref{prop:injectivity}, which tells us that the kernel of $\Map(\Sigma) \to \Map(S)$ is the central subgroup freely generated by those Dehn twists $T_{C_i}$ for which $S_i$ is a once-punctured disc. Taking colimits, it follows that the kernel of $\Map_\fC(S) \to \Map(S)$ is the central subgroup freely generated by all colimits of Dehn twists of this form that arise as we allow the compact subsurface $\Sigma \subset S$ to vary. There is exactly one such colimit of Dehn twists in $\Map_\fC(S)$ for each puncture $p$ of $S$, represented by the family of Dehn twists around $C_\epsilon$ for $\epsilon > 0$, where $C_\epsilon$ is the boundary component surrounding $p$ of a compact subsurface $\Sigma \subset S$ that is locally given by removing an open annulus of radius $\epsilon$ from around $p$. Thus the kernel is a central subgroup with a basis in one-to-one correspondence with the punctures of $S$.
\end{proof}

\begin{rem}
The group $\Map_\fC(S)$ may be a little counterintuitive since it is not a subgroup of a mapping class group in general. As an illustration, we note that it makes sense to consider it also when $S$ is finite-type, for example the once-punctured disc $S = \{ x \in \bR^2 \mid 0 < \lvert x \rvert \leq 1 \}$. The family of annuli $A_\epsilon = \{ x \in \bR^2 \mid \epsilon \leq \lvert x \rvert \leq 1 \}$ for $\epsilon \in (0,1)$ is cofinal in $\fC(S)$, each $\Map(A_\epsilon)$ is infinite cyclic and the homomorphisms $\Map(A_\epsilon) \to \Map(A_{\epsilon'})$ for $\epsilon \geq \epsilon'$ are isomorphisms, so it follows that the colimit $\Map_\fC(S)$ is also infinite cyclic, although $\Map(S)$ is trivial. In this case a generator of $\Map_\fC(S)$ is represented (for example) by the formal colimit of the Dehn twists around the inner boundary components of the annuli $A_\epsilon$, just like at the end of the proof above. More generally, one may see by the same reasoning that $\Map_\fC(S)$, for any finite-type surface $S$, is naturally isomorphic to the mapping class group of the compact surface obtained from $S$ by blowing up each puncture to a boundary component.
\end{rem}

\begin{rem}
As a complement to Lemma \ref{lem:colimits} we discuss briefly the difference between $\Map_c(S)$ and $\Map_f(S)$. If $\varphi$ is a self-homeomorphism of $S$, its induced action on $\Ends(S)$ sends the subset $\cP(S)$ of punctures (cf.~Definition \ref{defn:p} for this notation) onto itself. If $\varphi$ has support contained in a finite-type subsurface, the induced permutation of $\cP(S)$ lies in the subgroup $\Bij_f(\cP(S)) \subseteq \Bij(\cP(S))$ of bijections with finite support. If the induced permutation is trivial, we may shrink the support of $\varphi$ outside of an open neighbourhood of the punctures of $S$, which is then compact, so in this case $[\varphi]$ lies in $\Map_c(S)$. Putting this together, we have a short exact sequence
\begin{equation}
\label{eq:cf-ses}
1 \to \Map_c(S) \longrightarrow \Map_f(S) \longrightarrow \Bij_f(\cP(S)) \to 1.
\end{equation}
Alternatively, this may be deduced as a corollary of Lemma \ref{lem:colimits}. For each properly-embedded finite-type $\Sigma \subset S$ we have a short exact sequence $1 \to \PMap(\Sigma) \to \Map(\Sigma) \to \Bij(\cP(\Sigma)) \to 1$; taking the colimit over $\Sigma \in \fF(S)$ and applying the first part of Lemma \ref{lem:colimits}, we obtain \eqref{eq:cf-ses}.
\end{rem}

We have the following observation, part of which was already stated in Lemma \ref{lem:reformulation}.

\begin{cor}
\label{coro:questions-coincide}
We have the following coincidences of questions.
    \begin{itemize}[nosep]
        \item If $p_S \in \{ 0,1 \}$ then $\Map_c(S) = \Map_f(S)$ and so Questions \ref{q-finite-type-pure} and \ref{q-finite-type} coincide.
        \item If $p_S = 0$ then $\Map_\fC(S) = \PMap_\fF(S)$ and so Questions \ref{q-compact}, \ref{q-finite-type-pure} and \ref{q-finite-type} all coincide.
    \end{itemize}
\end{cor}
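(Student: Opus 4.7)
The plan is to deduce both coincidences directly from Lemma \ref{lem:colimits} together with the short exact sequence \eqref{eq:cf-ses} established in the preceding remark. There is no genuine obstacle here; the corollary is a bookkeeping consequence of what has already been proved.

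\textbf{First bullet.} Assume $p_S \in \{0,1\}$, so that $\cP(S)$ has cardinality at most one. Then the group $\Bij_f(\cP(S))$ of finitely-supported bijections of $\cP(S)$ is trivial. Feeding this into the short exact sequence
\[
1 \to \Map_c(S) \longrightarrow \Map_f(S) \longrightarrow \Bij_f(\cP(S)) \to 1
\]
from \eqref{eq:cf-ses} yields $\Map_c(S) = \Map_f(S)$ as subgroups of $\Map(S)$. By Lemma \ref{lem:reformulation}, Question \ref{q-finite-type-pure} (resp.\ \ref{q-finite-type}) is precisely the question of whether the inclusion $\Map_c(S) \subset \Map(S)$ (resp.\ $\Map_f(S) \subset \Map(S)$) induces a non-zero map on homology, so the two questions coincide.

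\textbf{Second bullet.} Assume $p_S = 0$. By Lemma \ref{lem:colimits}, the homomorphism $\Map_\fC(S) \to \PMap_\fF(S)$ is surjective with kernel free abelian of rank $p_S = 0$, hence is an isomorphism. Composing with the injection $\PMap_\fF(S) \hookrightarrow \Map(S)$ from the same lemma, we see that $\Map_\fC(S) \to \Map(S)$ and $\PMap_\fF(S) \to \Map(S)$ differ by an isomorphism of source; in particular they induce the same map on homology, so Questions \ref{q-compact} and \ref{q-finite-type-pure} coincide. Since $0 \in \{0,1\}$, the first bullet applies and Questions \ref{q-finite-type-pure} and \ref{q-finite-type} also coincide, giving the triple coincidence.
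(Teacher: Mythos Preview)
Your proof is correct and follows essentially the same approach as the paper's own proof, which simply cites the short exact sequence \eqref{eq:cf-ses} for the first bullet and the second point of Lemma \ref{lem:colimits} together with the first bullet for the second. You have spelled out the details a bit more explicitly, but the logic is identical.
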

\begin{proof}
The first statement follows from the short exact sequence \eqref{eq:cf-ses} and the second statement follows from the second point of Lemma \ref{lem:colimits} (and the first statement).
\end{proof}

\subsection*{Organisation of the proofs.}

We finish this section by briefly describing the overall organisation of the proofs of Theorems \ref{mainthm-LochNess}--\ref{mainthm-genus-zero-infinite-punctures-2}, which occupy \S\ref{s:shiftable}--\S\ref{s:BT-construction}. All of the \emph{vanishing} results are proven in \S\ref{s:shiftable}--\S\ref{s:Cantor-tree-punctured} and all of the \emph{non-vanishing} results are proven in \S\ref{s:non-trivial-classes}--\S\ref{s:BT-construction}, organised as follows:

\begin{itemize}[nosep]
\item[\S\ref{s:shiftable}] --- Theorem \ref{mainthm-LochNess}
\item[\S\ref{s:transferring}] --- Theorem \ref{mainthm-infinite-genus}, except for part \ref{mainthm-infinite-genus}\ref{mainthm-infinite-genus-3}
\item[\S\ref{s:genus-zero}] --- Theorem \ref{mainthm-genus-zero-infinite-punctures-2} -- and hence in particular Corollary \ref{maincor-flute-surface}
\item[\S\ref{s:Cantor-tree-punctured}] --- Theorem \ref{mainthm-genus-zero-finite-punctures}\ref{mainthm-genus-zero-finite-punctures-1}
\item[\S\ref{s:non-trivial-classes}] --- Most of our \emph{non-vanishing} results, namely:
  \begin{itemize}[nosep]
  \item[\S\ref{ss:finite-genus}] --- Theorem \ref{mainthm-finite-positive-genus}
  \item[\S\ref{ss:finite-p-part-1}] --- Theorem \ref{mainthm-genus-zero-finite-punctures}\ref{mainthm-genus-zero-finite-punctures-2}
  \item[\S\ref{ss:topologically-distinguished-set-of-ends}] --- Theorem \ref{mainthm-genus-zero-finite-punctures}\ref{mainthm-genus-zero-finite-punctures-3} and -- more generally -- Theorem \ref{mainthm-genus-zero-infinite-punctures-1}
  \end{itemize}
\item[\S\ref{s:BT-construction}] --- Our last non-vanishing result -- Theorem \ref{mainthm-infinite-genus}\ref{mainthm-infinite-genus-3} -- whose proof has a different flavour from \S\ref{s:non-trivial-classes}.
\end{itemize}

\section{Grid surfaces and shiftable subsurfaces}
\label{s:shiftable}

Most of our vanishing results, including Theorem \ref{mainthm-LochNess}, use the idea of \emph{grid surfaces}. In this section, we introduce this notion, prove the key Proposition \ref{prop:grid-surface} and use it to prove Theorem \ref{mainthm-LochNess}.

\begin{rem}
The proof of Proposition \ref{prop:grid-surface} uses an \emph{infinite iteration argument} that goes back to \cite{Mather1971}, who applied it to the group $\Homeo_c(\bR^d)$ of compactly-supported homeomorphisms of Euclidean space. The argument was axiomatised by \cite{BaumslagDyerHeller1980} into the concept of \emph{mitotic} groups, which are always acyclic. These are related to the concept of the \emph{suspension} of a group, and the argument is therefore sometimes called a \emph{suspension argument}. The argument was further generalised in \cite{Berrick1989} to \emph{binate groups} (which include all mitotic groups), which were also discovered independently (under the name \emph{pseudo-mitotic groups}) by \cite{Varadarajan1985}. A particular class of binate groups is the class of \emph{dissipated groups} \cite{Berrick2002}. See \cite[Section 3]{FLM23} for further information.

In each of those cases, the argument aims to prove the vanishing of the homology of a group, whereas, in our case, we aim to prove that a group homomorphism induces the zero map on homology. This is a little more subtle and requires a kind of ``two-dimensional'' infinite iteration, which we formalise in the notion of \emph{grid surfaces} (Definition \ref{def:grid-surface}). Another effect of this subtlety is that we can only prove our vanishing results on homology with coefficients in a \emph{field}; see Remark \ref{rmk:field-vs-Z} for why this is the case. We note that one could also use \cite[Proposition 1.4]{Varadarajan1985} to prove Proposition \ref{prop:grid-surface}; see Remark \ref{rem-pseudo-mitosis-pf}.
\end{rem}

\begin{defn}
\label{def:grid-surface}
Let $\Sigma$ be a surface with one boundary component. The associated \emph{grid surface} $\Gr(\Sigma)$ is constructed as follows:
\begin{itemize}
    \item Glue an annulus to $\partial \Sigma$ and denote the resulting surface by $\bar{\Sigma}$. Identify $\partial \bar{\Sigma}$ with $\partial [0,1]^2 \subset \bR^2$.
    \item Define $\Gr(\Sigma)$ to be the quotient of $\bZ \times \bN \times \bar{\Sigma}$ that glues the boundaries $\bZ \times \bN \times \partial \bar{\Sigma}$ together in a half-plane grid. See Figure \ref{fig:grid-surface}.
    \item Similarly, define $\Gr_\bZ(\Sigma)$ to be the quotient of $\bZ \times \bZ \times \bar{\Sigma}$ that glues the boundaries $\bZ \times \bZ \times \partial \bar{\Sigma}$ together in a full-plane grid.
\end{itemize}
\end{defn}

\begin{notation}
\label{notation:grid-surfaces}
In the above setting, for a subset $A \subseteq \bN$, we write $\Gr_A(\Sigma)$ for the subsurface of $\Gr(\Sigma)$ given by the image of $\bZ \times A \times \bar{\Sigma}$. For example, see Figure \ref{fig:grid-surface} for illustrations of $\Gr_{[n,\infty)}(\Sigma) =: \Gr_{\geq n}(\Sigma)$, $\Gr_{\{n\}}(\Sigma) =: \Gr_n(\Sigma)$ and $\Gr_{[0,n]}(\Sigma)$.

We also write $\Sigma_{i,j}$ for the $(i,j)$th copy of $\Sigma$ in $\Gr(\Sigma)$. Unless otherwise specified, we shall always identify $\Sigma$ with $\Sigma_{0,0} \subset \Gr(\Sigma)$.
\end{notation}

\begin{rem}
The meaning of the notation $\Sigma_{i,j}$ explained in Notation \ref{notation:grid-surfaces} is used \emph{only} in the present section, and so it should not cause confusion with the more standard meaning of $\Sigma_{g,b}$ to denote the connected, compact, orientable surface of genus $g$ with $b$ boundary-components, which is its meaning in the other sections of this paper.
\end{rem}

\begin{figure}[t]
    \centering
    \includegraphics[scale=0.7]{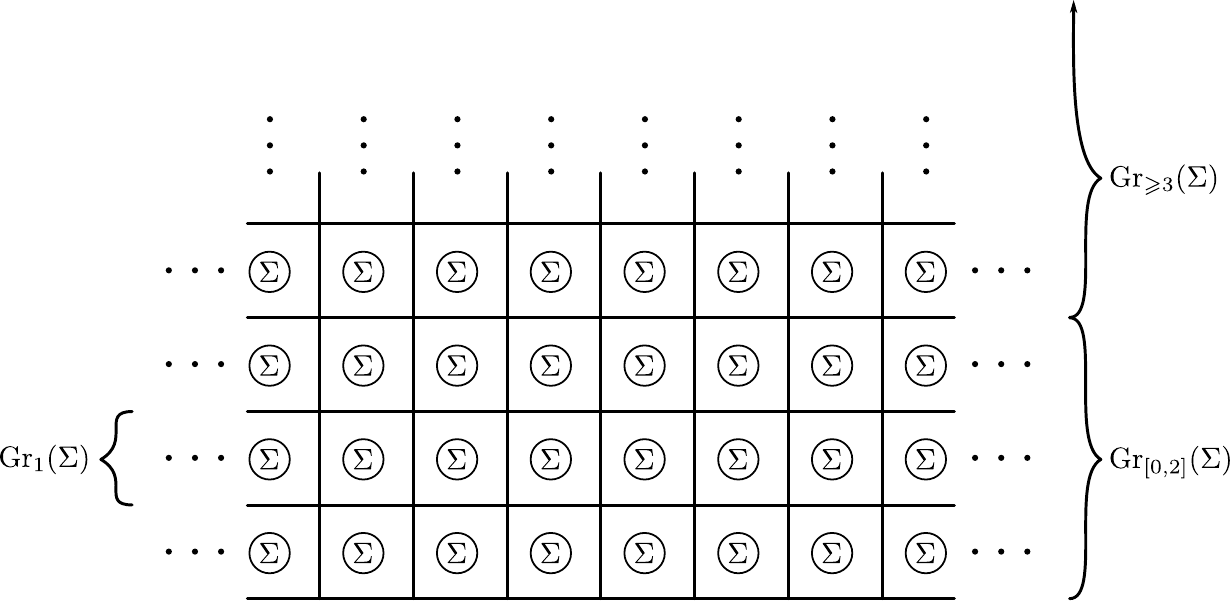}
    \caption{The grid surface $\Gr(\Sigma)$ together with subsurfaces $\Gr_A(\Sigma)$ for various subsets $A \subseteq \bN$. Each circle contains a copy of $\Sigma$; the region between each circle and the boundary of the corresponding square is the annulus in the first point in Definition \ref{def:grid-surface}.}
    \label{fig:grid-surface}
    \vspace{2.5ex}
    \includegraphics[scale=0.7]{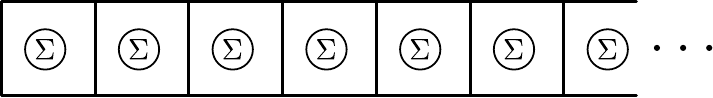}
    \caption{The infinite strip surface $\Str(\Sigma)$ (Definition \ref{def:infinite-strip-surface}).}
    \label{fig:infinite-strip-surface}
\end{figure}

\begin{rem}
The mapping class group of the surface (with non-compact boundary) $\Gr(\Sigma)$ is defined in the usual way, as the group of isotopy classes of homeomorphisms that preserve the boundary pointwise.
\end{rem}

The key technical result of this section is the following.

\begin{prop}
\label{prop:grid-surface}
For any surface $\Sigma$ with one boundary component, the map
\begin{equation}
\label{eq:iota-zero}
\Map(\Sigma) \longrightarrow \Map(\Gr(\Sigma)),
\end{equation}
given by extending homeomorphisms by the identity, induces the zero map on homology with field coefficients in all positive degrees. Hence the same is true also for $\Map(\Sigma) \to \Map(\Gr_\bZ(\Sigma))$.
\end{prop}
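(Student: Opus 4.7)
The strategy is an infinite-iteration (suspension / mitosis / binate) argument of Mather--Baumslag--Dyer--Heller--Varadarajan type, refined so that it shows a homomorphism is zero on homology (rather than that a group is acyclic), at the cost of requiring field coefficients. I would first set up three homomorphisms $\iota, \tau, \mu \colon \Map(\Sigma) \to \Map(\Gr(\Sigma))$ using the column $\{0\} \times \bN$ of copies of $\Sigma$ inside $\Gr(\Sigma)$: let $\iota(\varphi)$ act as $\varphi$ on $\Sigma_{0,0}$, let $\tau(\varphi)$ act as $\varphi$ on each $\Sigma_{0,j}$ with $j \geq 1$, and let $\mu(\varphi)$ act as $\varphi$ on each $\Sigma_{0,j}$ with $j \geq 0$ (identity elsewhere in all three cases). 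These are well-defined mapping classes because the supports along the column are pairwise disjoint and locally finite in $\Gr(\Sigma)$, so one may simultaneously realise the action on every copy by a single homeomorphism. Since $\iota(\varphi)$ and $\tau(\psi)$ have disjoint supports, the images of $\iota$ and $\tau$ commute in $\Map(\Gr(\Sigma))$, and $\mu = \iota \cdot \tau$.

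Next I would construct a mapping class $s \in \Map(\Gr(\Sigma))$ satisfying $s \mu(\varphi) s^{-1} = \tau(\varphi)$; geometrically $s$ realises the one-step vertical translation $\Sigma_{0,j} \mapsto \Sigma_{0,j+1}$ through the canonical grid identifications. This is the step I expect to be the main obstacle, because the cell containing $\Sigma_{0,0}$ is adjacent to $\partial \Gr(\Sigma)$, while mapping classes in $\Map(\Gr(\Sigma))$ are required to fix that boundary pointwise. I would model $\Gr(\Sigma)$ on $\bR \times [0,\infty)$ with boundary at $y=0$, and take for $s$ the class of a homeomorphism of the form $(x,y) \mapsto (x, y + \phi(y))$, where $\phi \colon [0,\infty) \to [0,1]$ is continuous with $\phi(0)=0$ and $\phi \equiv 1$ on $[\varepsilon,\infty)$ for some small $\varepsilon > 0$; after isotoping the embeddings so that each $\Sigma_{0,j}$ lies in the region $y > \varepsilon$, this fixes $\bR \times \{0\}$ pointwise while acting as pure translation by $(0,1)$ on every $\Sigma_{0,j}$. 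This gives $s(\Sigma_{0,j}) = \Sigma_{0,j+1}$ and hence the claimed conjugation identity.

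For the homological conclusion: given $\alpha \in H_n(\Map(\Sigma);K)$ with $K$ a field and $n \geq 1$, since $\iota$ and $\tau$ have commuting images, combining Künneth with the commuting-images formula gives
\[
\mu_*(\alpha) \;=\; \iota_*(\alpha) + \tau_*(\alpha) + \sum_i \iota_*(\alpha_i') \ast \tau_*(\alpha_i''),
\]
where $\Delta_*(\alpha) = \alpha \otimes 1 + 1 \otimes \alpha + \sum_i \alpha_i' \otimes \alpha_i''$ is the homology coproduct with $|\alpha_i'|, |\alpha_i''| \geq 1$, and $\ast$ denotes the Pontryagin-type product coming from the multiplication homomorphism $\iota(\Map(\Sigma)) \times \tau(\Map(\Sigma)) \to \Map(\Gr(\Sigma))$ available thanks to the commuting images. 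Since inner automorphisms act trivially on group homology, $\tau = s \mu s^{-1}$ implies $\tau_* = \mu_*$; substituting and cancelling $\mu_*(\alpha)$ from both sides yields
\[
\iota_*(\alpha) \;=\; - \sum_i \iota_*(\alpha_i') \ast \mu_*(\alpha_i'').
\]
For $n = 1$ the sum is empty (both factors would need positive degree), so $\iota_*(\alpha) = 0$; for $n \geq 2$ each $\alpha_i'$ has degree strictly less than $n$, so an induction on $n$ kills all cross terms and forces $\iota_*(\alpha) = 0$. The field hypothesis enters only through the Künneth isomorphism and the clean form of the coproduct, matching the field-coefficient restriction of the main theorems. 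Finally, the second statement for $\Gr_\bZ(\Sigma)$ is immediate: the direct inclusion $\Map(\Sigma) \to \Map(\Gr_\bZ(\Sigma))$ factors as $\Map(\Sigma) \to \Map(\Gr(\Sigma)) \to \Map(\Gr_\bZ(\Sigma))$ via the embedding of $\Gr(\Sigma)$ as the upper half of $\Gr_\bZ(\Sigma)$, so vanishing of the first factor on positive-degree homology yields vanishing of the composition.
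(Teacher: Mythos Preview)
Your argument has two genuine gaps, both of which are repaired by the paper's explicitly two-dimensional set-up.

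First, the vertical shift $s$ you construct is not a homeomorphism of $\Gr(\Sigma)$. The formula $(x,y)\mapsto(x,y+\phi(y))$ is a self-homeomorphism of the half-plane, but once the $\Sigma$-handles are attached it sends every $\Sigma_{i,j}$ to $\Sigma_{i,j+1}$, so nothing in $\Gr(\Sigma)$ maps onto the bottom-row copies $\Sigma_{i,0}$: your $s$ is only a proper self-embedding. The obstacle you flagged (fixing $\partial\Gr(\Sigma)$) is real but secondary; the essential obstruction is that the $\bN$-direction is half-infinite, so a pure shift along it cannot be surjective. The paper instead repeats $\varphi$ \emph{horizontally} along a $\bZ$-indexed row and uses a horizontal shift (tapered near the boundary), which genuinely is a homeomorphism of $\Gr_{\geq n}(\Sigma)$.

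Second, and more subtly, your inductive treatment of the cross terms does not go through. You write them as $\iota_*(\alpha_i')\ast\tau_*(\alpha_i'')$ with $\ast$ coming from the multiplication $\iota(\Map(\Sigma))\times\tau(\Map(\Sigma))\to\Map(\Gr(\Sigma))$; but then the factor $\iota_*(\alpha_i')$ lives in $H_k(\iota(\Map(\Sigma)))$, not in $H_k(\Map(\Gr(\Sigma)))$, and since $\iota\colon\Map(\Sigma)\to\iota(\Map(\Sigma))$ is an isomorphism onto its image this class is nonzero there. The inductive hypothesis only says the \emph{composite} into $H_k(\Map(\Gr(\Sigma)))$ vanishes, and there is no Pontryagin product on $H_*(\Map(\Gr(\Sigma)))$ through which to factor. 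This is exactly why the paper introduces the rotation $c_{\mathrm{rot}}$: it moves the single copy from row $n$ to row $n+1$, so that (after conjugation) the product factors through $\Map(\Gr_{\geq n+1}(\Sigma))\times\Map(\Gr_n(\Sigma))$. Now the first factor is the map $\iota_{n+1}\colon\Map(\Sigma)\to\Map(\Gr_{\geq n+1}(\Sigma))$, and since $\Gr_{\geq n+1}(\Sigma)\cong\Gr(\Sigma)$ the inductive hypothesis (stated uniformly in $n$) applies \emph{in that target group}, so the tensor factor really is zero before one glues. In short, both the horizontal shift and the rotation between rows --- the full grid structure --- are essential; a one-column argument does not suffice.
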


\begin{rem}\label{rem-pseudo-mitosis-pf}
We give a direct proof of this proposition below. One could also prove it using the notion of \emph{pseudo-mitosis} \cite[Definition 1.2]{Varadarajan1985}, as follows. One first notices that the embedding $\Map(\Sigma) \hookrightarrow \Map(\Gr_0(\Sigma))$ has a pseudo-mitosis. Hence, by \cite[Proposition 1.4]{Varadarajan1985}, it induces the zero map on homology in degree $1$ for any field coefficients. To promote this to any degree we use the self-similarity of the grid surface. Let $\Str(\Sigma)$ be the strip surface in Figure \ref{fig:infinite-strip-surface}, embedded as a vertical strip in $\Gr(\Sigma)$. Let us take $H = \Map(\Str(\Sigma))$ and $G = \Map(\Gr(\Sigma))$ in \cite[Proposition 1.4]{Varadarajan1985}; the ``horizontal translation'' (modified in a neighbourhood of the boundary line so as to fix it pointwise) is then part of a pseudo-mitosis for $H \subset G$. Taking $A \subset H$ to be $\Map(\Sigma) \hookrightarrow \Map(\Str(\Sigma))$, we then notice that this embedding induces the zero map on homology in degree $1$ for any field coefficients by what we showed above, since we may factor it through the embedding $\Map(\Sigma) \hookrightarrow \Map(\Gr_0(\Sigma))$ using the inclusion $\Gr_0(\Sigma) \subset \Gr(\Sigma)$ and a homeomorphism $\Str(\Sigma) \cong \Gr(\Sigma)$ that is the identity on the preferred embedded copy of $\Sigma$ (see for example Lemma \ref{lem:strip-and-grid} below). Hence \cite[Proposition 1.4]{Varadarajan1985} implies that the composition $A \subset H \subset G$, which is the embedding $\Map(\Sigma) \hookrightarrow \Map(\Gr({\Sigma}))$, induces the zero map on homology in degrees $1$ and $2$ with any field coefficients. Iterating this trick, we conclude inductively that $\Map(\Sigma) \hookrightarrow \Map(\Gr({\Sigma}))$ induces the zero map on homology in all positive degrees with any field coefficients.
\end{rem}

In order to apply Proposition \ref{prop:grid-surface} in examples, it will be useful to have a simpler description of $\Gr(\Sigma)$.

\begin{defn}
\label{def:infinite-strip-surface}
Let $\Sigma$ be a surface with one boundary component. The \emph{infinite strip surface} $\Str(\Sigma)$ is constructed, similarly to Definition \ref{def:grid-surface}, to be the quotient of $\bN \times \bar{\Sigma}$ that glues the boundaries $\bN \times \partial\bar{\Sigma}$ together in a one-dimensional ray. See Figure \ref{fig:infinite-strip-surface}.
\end{defn}

Clearly $\Str(\Sigma)$ embeds properly into $\Gr(\Sigma)$ (compare Figures \ref{fig:grid-surface} and \ref{fig:infinite-strip-surface}). But in fact we have:

\begin{lem}
\label{lem:strip-and-grid}
The surfaces $\Str(\Sigma)$ and $\Gr(\Sigma)$ are homeomorphic. Moreover, this homeomorphism may be chosen to act by the identity on the preferred embedded copy of $\Sigma$, namely the left-most copy for $\Str(\Sigma)$ and the copy at coordinates $(0,0)$ for $\Gr(\Sigma)$.
\end{lem}
\begin{proof}
Let $S \subset \Str(\Sigma)$ be the complement of a closed collar neighbourhood (this is, of course, homeomorphic to $\Str(\Sigma)$). It will suffice to describe a proper embedding of $S$ into $\Gr(\Sigma)$ such that the complement of its image is a closed collar neighbourhood of $\Gr(\Sigma)$. Such a proper embedding may be constructed easily as soon as one chooses a bijection $\upsilon \colon \bN \to \bZ \times \bN$ such that $\upsilon(n)$ and $\upsilon(n+1)$ are neighbours (at $\ell^1$-distance $1$ from each other) for every $n$. To ensure that the second statement of the lemma holds, we must also arrange that $v(0) = (0,0)$. For example one may take the ``snake bijection'' that progressively fills each $\ell^\infty$-ball around $(0,0)$. Alternatively, the fact that $\Str(\Sigma)$ and $\Gr(\Sigma)$ are homeomorphic may be deduced from the classification of surfaces with non-compact boundary \cite{BrownMesser1979} (although quoting this much more general classification result is overkill here).
\end{proof}

\begin{defn}
\label{def:shiftable}
A properly-embedded subsurface $\Sigma \subset S$ is called \emph{shiftable} if the inclusion $\Sigma \subset S$ extends to a proper embedding $\Str(\Sigma) \hookrightarrow S$.
\end{defn}

\begin{rem}
Elsewhere, a subsurface $\Sigma \subset S$ is sometimes called ``shiftable'' if there is a homeomorphism of $S$ such that all of the iterated images of $\Sigma$ under this homeomorphism are pairwise disjoint. In fact, these two definitions are equivalent, although we will not need this equivalence here. In one direction, suppose that $\Sigma \subset S$ is shiftable in the sense of Definition \ref{def:shiftable}. By Lemma \ref{lem:strip-and-grid}, we therefore have a proper embedding $\Gr_0(\Sigma) \subset \Gr(\Sigma) \cong \Str(\Sigma) \hookrightarrow S$. The evident shift homeomorphism of $\Gr_0(\Sigma)$ may then be extended by the identity to obtain a shift homeomorphism of $S$ for $\Sigma$. In the other direction:
\end{rem}

\begin{lem}
Suppose that $\Sigma \subset S$ is a properly-embedded subsurface with non-empty, connected boundary that admits a shift homeomorphism, i.e.\ a homeomorphism $f \colon S \to S$ such that $f^n(\Sigma) \cap \Sigma = \varnothing$ for all $n\geq 1$. Then $\Sigma \subset S$ is shiftable in the sense of Definition \ref{def:shiftable}.
\end{lem}
\begin{proof}
Let us denote by $T \subset S$ the surface obtained from $S$ by removing the interior of each $f^n(\Sigma)$ for $n\geq 0$ and write $B_n = f^n(\partial \Sigma)$, so that the boundary of $T$ is the disjoint union of the $B_n$ for $n\geq 0$. Also write $U$ for the surface (without boundary) obtained from $T$ by collapsing each $B_n$ to a point $b_n \in U$.

In order to show that $\Sigma \subset S$ extends to a proper embedding $\Str(\Sigma) \hookrightarrow S$, it will suffice to find a locally finite, pairwise disjoint collection of arcs $\alpha_i$ in $T$, for $i\geq 0$, such that $\alpha_i$ connects $B_{n_i}$ to $B_{n_{i+1}}$ for some increasing sequence $n_i$. This is because, given such a collection of arcs, the union of the $f^{n_i}(\Sigma)$ and tubular neighbourhoods of the arcs $\alpha_i$ for $i\geq 0$ will give the desired proper embedding $\Str(\Sigma) \hookrightarrow S$. Passing to the surface $U$ and the sequence of points $(b_n)_{n\geq 0}$ in $U$, it is therefore sufficient to find a locally finite collection of arcs $\alpha_i$ in $U$, connecting $b_{n_i}$ to $b_{n_{i+1}}$ for some subsequence $(b_{n_i})_{i\geq 0}$ of $(b_n)_{n\geq 0}$, that are pairwise disjoint except at their endpoints.

To do this, choose any subsequence that converges in the Freudenthal compactification $\overline{U}$ of $U$ to an end $e \in \overline{U} \smallsetminus U = \Ends(U)$. Write $U$ as an increasing union of compact, connected subsurfaces $C_k \subset U$ for $k\geq 0$ and let $U_k \subset U$ be the connected component of $U \smallsetminus C_k$ such that $e$ is a limit point of $U_k$ in $\overline{U}$. We may choose the $C_k$ such that each $U_k$ has a single boundary component $A_k$, which implies that each stratum $U_k \smallsetminus \mathrm{int}(U_{k+1})$ is connected and has two boundary components $A_k$ and $A_{k+1}$. Since $b_{n_i} \to e$ in $\overline{U}$, we may pass to appropriate subsequences of $C_k$ and $b_{n_i}$ to arrange that $b_{n_i} \in U_i \smallsetminus U_{i+1}$ for each $i\geq 0$. Choose a point $x_i$ on each circle $A_i$ and choose arcs $\beta_i$ and $\gamma_i$ in $U_i \smallsetminus \mathrm{int}(U_{i+1})$ such that $\beta_i(0) = x_i$, $\beta_i(1) = \gamma_i(0) = b_{n_i}$, $\gamma_i(1) = x_{i+1}$ and $\beta_i,\gamma_i$ are disjoint except at the point $b_{n_i}$. The desired collection of arcs $\alpha_i$ is then obtained by gluing $\gamma_i$ to $\beta_{i+1}$ for each $i\geq 0$.
\end{proof}

\begin{cor}
\label{coro:shiftable}
Let $\Sigma \subset S$ be a properly-embedded subsurface and suppose that it is shiftable. Then the natural map $\Map(\Sigma) \to \Map(S)$ induces the zero map on homology with field coefficients in all positive degrees.
\end{cor}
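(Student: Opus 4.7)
The plan is to deduce this directly from Proposition~\ref{prop:grid-surface} together with Lemma~\ref{lem:strip-and-grid} and Lemma~\ref{lem:extend-by-id}, by factoring the inclusion-induced map through the mapping class group of a grid surface.

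First, I would unpack the hypothesis: since $\Sigma \subset S$ is shiftable, Definition~\ref{def:shiftable} furnishes a proper embedding $\Str(\Sigma) \hookrightarrow S$ extending the inclusion of $\Sigma$. Composing with the proper embedding $\Sigma \hookrightarrow \Str(\Sigma)$ (as the ``first'' copy of $\bar\Sigma$) and applying Lemma~\ref{lem:extend-by-id}, we get a commutative diagram of ``extend-by-identity'' homomorphisms
\begin{equation*}
\Map(\Sigma) \longrightarrow \Map(\Str(\Sigma)) \longrightarrow \Map(S),
\end{equation*}
whose composite is the canonical map $\Map(\Sigma) \to \Map(S)$.

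Next, I would invoke Lemma~\ref{lem:strip-and-grid} to fix a homeomorphism $\Str(\Sigma) \cong \Gr(\Sigma)$ that carries the distinguished copy of $\Sigma$ inside $\Str(\Sigma)$ onto a copy $\Sigma_{i,j} \subset \Gr(\Sigma)$ (this is automatic from the construction in the proof of the lemma, which fills the grid one tile at a time starting from some chosen tile). Under this identification, the first map in the display becomes the inclusion-induced map $\Map(\Sigma) \to \Map(\Gr(\Sigma))$ considered in Proposition~\ref{prop:grid-surface}; by the ``translation'' symmetry of $\Gr(\Sigma)$ permuting the tiles, the conclusion of Proposition~\ref{prop:grid-surface} applies regardless of which tile $\Sigma_{i,j}$ we land on.

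Finally, passing to homology with coefficients in an arbitrary field $K$, functoriality gives a factorisation
\begin{equation*}
H_*(\Map(\Sigma);K) \longrightarrow H_*(\Map(\Gr(\Sigma));K) \longrightarrow H_*(\Map(S);K),
\end{equation*}
and by Proposition~\ref{prop:grid-surface} the first arrow vanishes in all positive degrees. Hence so does the composite, which is the map induced by $\Map(\Sigma) \to \Map(S)$. There is no real obstacle here; the only mild subtlety is confirming that the homeomorphism in Lemma~\ref{lem:strip-and-grid} may be chosen so that the marked $\Sigma$-tile of $\Str(\Sigma)$ maps to some (any) marked $\Sigma$-tile of $\Gr(\Sigma)$, which is clear from the proof of that lemma or, alternatively, by pre-composing with a self-homeomorphism of $\Gr(\Sigma)$ shifting tiles.
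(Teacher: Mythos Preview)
Your proposal is correct and follows essentially the same approach as the paper: factor $\Map(\Sigma) \to \Map(S)$ through $\Map(\Str(\Sigma))$, identify $\Str(\Sigma)\cong\Gr(\Sigma)$ via Lemma~\ref{lem:strip-and-grid}, and apply Proposition~\ref{prop:grid-surface}. You spell out slightly more detail than the paper (in particular the minor point about which tile the distinguished copy of $\Sigma$ lands on, which the paper leaves implicit), but the argument is the same.
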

\begin{proof}
Since $\Sigma$ is shiftable, the map $\Map(\Sigma) \to \Map(S)$ factors as $\Map(\Sigma) \to \Map(\Str(\Sigma)) \to \Map(S)$. Hence the result follows from Proposition \ref{prop:grid-surface} and Lemma \ref{lem:strip-and-grid}.
\end{proof}

\begin{proof}[Proof of Proposition \ref{prop:grid-surface}]
The second statement of the proposition follows from the first statement, since $\Map(\Sigma) \to \Map(\Gr_\bZ(\Sigma))$ factors through \eqref{eq:iota-zero}.

To prove the first statement, we first define various homomorphisms that we shall need. For $m \in \bZ$ and $n \in \bN$, let
\begin{equation}
\label{eq:psi-bar}
\bar{\psi}_{m,n} \colon \Map(\Sigma) \longrightarrow \Map(\Gr_n(\Sigma))
\end{equation}
be the homomorphism that sends $[\varphi]\in \Map(\Sigma)$ to the mapping class represented by the homeomorphism of $\Gr_n(\Sigma)$ that acts by $\varphi$ on $\Sigma_{i,n}$ for each $i\geq m$ and by the identity elsewhere. We also write
\begin{equation}
\label{eq:psi}
\psi_{m,n} \colon \Map(\Sigma) \longrightarrow \Map(\Gr_{\geq n}(\Sigma))
\end{equation}
for the composition of $\bar{\psi}_{m,n}$ with the natural homomorphism $\Map(\Gr_n(\Sigma)) \to \Map(\Gr_{\geq n}(\Sigma))$ given by extension by the identity. We write
\begin{equation}
\label{eq:iota}
\iota_n \colon \Map(\Sigma) \longrightarrow \Map(\Gr_{\geq n}(\Sigma))
\end{equation}
for the homomorphism sending $[\varphi]$ to the mapping class represented by the homeomorphism of $\Gr_{\geq n}(\Sigma)$ that acts by $\varphi$ on $\Sigma_{0,n}$ and by the identity elsewhere. Note that $\iota_0$ is precisely the map \eqref{eq:iota-zero} in Proposition \ref{prop:grid-surface}. Finally, we define
\begin{equation}
\label{eq:eta-and-nu}
\eta_n \text{ and } \nu_n \colon \Map(\Sigma) \times \Map(\Sigma) \longrightarrow \Map(\Gr_{\geq n} (\Sigma))
\end{equation}
to send $([\varphi_1],[\varphi_2])$ to the mapping class represented by the homeomorphism of $\Gr_{\geq n}(\Sigma)$ that acts:
\begin{itemize}[nosep]
\item by $\varphi_2$ on $\Sigma_{i,n}$ for all $i\geq 1$,
\item (for $\eta_n$:) by $\varphi_1$ on $\Sigma_{0,n}$,
\item (for $\nu_n$:) by $\varphi_1$ on $\Sigma_{0,n+1}$,
\item by the identity elsewhere.
\end{itemize}
The proof will use the following commutative diagram.
\begin{equation}
\label{eq:big-diagram}
\begin{split}
\centering
\begin{tikzpicture}
[x=1mm,y=1.5mm]
\node (l) at (0,10) {$\Map(\Sigma)$};
\node (mm) at (45,10) {$\Map(\Sigma) \times \Map(\Sigma)$};
\node (mb) at (45,0) {$\Map(\Gr_{\geq n+1}(\Sigma)) \times \Map(\Gr_n(\Sigma))$};
\node (rt) at (105,20) {$\Map(\Gr_{\geq n}(\Sigma))$};
\node (rm) at (105,10) {$\Map(\Gr_{\geq n}(\Sigma))$};
\node (rb) at (105,0) {$\Map(\Gr_{\geq n}(\Sigma))$};
\draw[->] (l) to node[above,font=\small]{$\Delta$} (mm);
\draw[->] (mm) to node[above,font=\small]{$\eta_n$} (rm);
\draw[->] (mb) to node[below,font=\small]{glue} (rb);
\draw[->] (mm) to node[left,font=\small]{$\iota_{n+1} \times \bar{\psi}_{1,n}$} (mb);
\draw[->] (mm) to node[above,font=\small]{$\nu_n$} (rb);
\draw[<->] (rm) to node[right,font=\small]{$c_{\mathrm{rot}}$} (rb);
\draw[<->] (rm) to node[right,font=\small]{$c_{\mathrm{sh}}$} (rt);
\draw[->] (l) -- (0,20) -- node[above,font=\small]{$\psi_{1,n}$} (rt);
\end{tikzpicture}
\end{split}
\end{equation}
Here, $\Delta$ denotes the diagonal map and ``glue'' is the map that takes two homeomorphisms defined on $\Gr_{\geq n+1}(\Sigma)$ and on $\Gr_n(\Sigma)$ and glues them to a homeomorphism on $\Gr_{\geq n}(\Sigma) = \Gr_{\geq n+1}(\Sigma) \cup \Gr_n(\Sigma)$. The right-hand vertical maps $c_{\mathrm{sh}}$ and $c_{\mathrm{rot}}$ are conjugation by the (vertically bounded) homeomorphisms $\mathrm{sh} \text{ and } \mathrm{rot} \colon \Gr_{\geq n}(\Sigma) \to \Gr_{\geq n}(\Sigma)$ defined, respectively, by \emph{shifting one step to the right on the $n$th row} and by \emph{rotating by ninety degrees in the subsurface containing $\Sigma_{i,j}$ for $i \in \{-1,0\}$ and $j \in \{n,n+1\}$}. (These maps in diagram \eqref{eq:big-diagram} are depicted with double arrows; the direction of the arrow depends on which side one conjugates on and (in the case of $c_{\mathrm{rot}}$) the sense of rotation of the subsurface.)

The statement that we shall prove -- by induction on $j$ -- is the following. Let us fix a field $K$. Then for every $n \in \bN$ and $j\geq 1$, the induced map
\begin{equation}
\label{eq:inductive-hypothesis}
(\iota_n)_* \colon H_j(\Map(\Sigma);K) \longrightarrow H_j(\Map(\Gr_{\geq n}(\Sigma));K)
\end{equation}
is the zero map. In particular, this will complete the proof of the proposition, which corresponds to the special case of $n=0$. The base case $j=0$ is vacuous, so we let $j\geq 1$, fix any $n \in \bN$ and assume as inductive hypothesis that \eqref{eq:inductive-hypothesis} is the zero map for smaller values of $j$ and for all values of $n$.

Let us apply the Künneth theorem to the product of maps $\iota_{n+1} \times \bar{\psi}_{1,n}$ in diagram \eqref{eq:big-diagram}. It implies that we have a commutative square
\begin{center}
\small
\begin{tikzpicture}
[x=1mm,y=1.5mm]
\node (lt) at (0,10) {$\displaystyle\smash{\bigoplus_{k=0}^j}\, H_k(\Map(\Sigma)) \otimes H_{j-k}(\Map(\Sigma))$};
\node (rt) at (90,10) {$\displaystyle\smash{\bigoplus_{k=0}^j}\, H_k(\Map(\Gr_{\geq n+1}(\Sigma))) \otimes H_{j-k}(\Map(\Gr_n(\Sigma)))$};
\node (lm) at (0,0) {$H_j(\Map(\Sigma) \times \Map(\Sigma))$};
\node (rm) at (90,0) {$H_j(\Map(\Gr_{\geq n+1}(\Sigma)) \times \Map(\Gr_n(\Sigma)))$};
\draw[->] (lt) to (rt);
\node at ($ (lt.east)!0.5!(rt.west) + (0,3) $) [font=\footnotesize] {$\bigoplus (\iota_{n+1})_* \otimes (\bar{\psi}_{1,n})_*$};
\draw[->] (lm) to node[above,font=\footnotesize]{$(\iota_{n+1} \times \bar{\psi}_{1,n})_*$} (rm);
\draw[->] (lt) to node[left,font=\footnotesize]{\rotatebox{90}{$\cong$}} (lm);
\draw[->] (rt) to node[right,font=\footnotesize]{\rotatebox{270}{$\cong$}} (rm);
\end{tikzpicture}
\end{center}
in which the vertical maps are isomorphisms and the coefficients of homology are $K$ in each case. Let $\alpha \in H_j(\Map(\Sigma);K)$ be any element. Naturality of the K\"unneth decomposition, applied to the two projections $\Map(\Sigma) \times \Map(\Sigma) \twoheadrightarrow \Map(\Sigma)$, implies that the image of $\Delta_*(\alpha)$ in the top-left corner of this square has $0$-th component equal to $1 \otimes \alpha$ and $j$-th component equal to $\alpha \otimes 1$. The inductive hypothesis implies that the top horizontal map is the zero map on the $k$-th component for all $0<k<j$. It follows that the image of $\Delta_*(\alpha)$ in the top-right corner of the square is $1 \otimes (\bar{\psi}_{1,n})_*(\alpha) + (\iota_{n+1})_*(\alpha) \otimes 1$. Composing this with the right-hand vertical isomorphism and the map on homology induced by the ``glue'' map of \eqref{eq:big-diagram}, we obtain the element $(\psi_{1,n})_*(\alpha) + (\iota_{n+1})_*(\alpha) \in H_j(\Map(\Gr_{\geq n}(\Sigma));K)$. It therefore follows that the map on $H_j(-;K)$ induced by the map across diagram \eqref{eq:big-diagram} is equal to $(\psi_{1,n})_* + (\iota_{n+1})_*$. But it is also equal to $(\psi_{1,n})_*$, so we must have $(\iota_{n+1})_* = 0$. Since $\iota_n$ and $\iota_{n+1}$ are conjugate as maps $\Map(\Sigma) \to \Map(\Gr_{\geq n}(\Sigma))$, it follows that also $(\iota_n)_* = 0$, as claimed.
\end{proof}

\begin{rem}
\label{rmk:field-vs-Z}
The obstruction to upgrading our vanishing results from field coefficients to arbitrary (in particular, integral) coefficients is due to the failure of \emph{naturality} of the Künneth decomposition (i.e.~the failure of the Künneth short exact sequence to admit a natural splitting), which prevents the last paragraph of the above proof from going through unless one knows that the Tor terms vanish.
\end{rem}

In \S\ref{s:transferring} we will apply Corollary \ref{coro:shiftable} to prove the vanishing results of Theorem \ref{mainthm-infinite-genus}. We finish this section by proving, directly from Proposition \ref{prop:grid-surface}, the special case of Theorem \ref{mainthm-infinite-genus} corresponding to Theorem \ref{mainthm-LochNess}.

\begin{proof}[Proof of Theorem \ref{mainthm-LochNess}]
Let $\Sigma$ be a compact subsurface of $L$, the Loch Ness monster surface. Our goal is to prove that the homomorphism $\Map(\Sigma) \to \Map(L)$ induces the zero map on homology with field coefficients in all positive degrees. (I.e.~a negative answer to Question \ref{q-compact} for $L$, which is equivalent to Question \ref{q-finite-type-pure} for $L$ by Lemma \ref{lem:reformulation} since $L$ has no punctures.) By including $\Sigma$ into a larger compact subsurface if necessary, we may assume that it has exactly one boundary component and positive genus. The pair $(L,\Sigma)$ is homeomorphic to the pair $(\Gr_\bZ(\Sigma),\Sigma)$, so the result follows from Proposition \ref{prop:grid-surface}.
\end{proof}

\section{Transferring homology classes to shiftable subsurfaces}
\label{s:transferring}

In this section we generalise Theorem \ref{mainthm-LochNess} by proving the vanishing results of Theorem \ref{mainthm-infinite-genus} (namely all of Theorem \ref{mainthm-infinite-genus} except for part \ref{mainthm-infinite-genus}\ref{mainthm-infinite-genus-3}, which we prove later in \S\ref{s:BT-construction}). This depends fundamentally on Corollary \ref{coro:shiftable} from the previous section, together with a technique (Proposition \ref{prop:transfer-to-shiftable}) to transfer the support of homology classes to shiftable subsurfaces using Harer's homological stability results for mapping class groups of finite-type surfaces.

\begin{prop}
\label{prop:transfer-to-shiftable}
Suppose that $g_S = \infty$ and let $\Sigma \subset S$ be a properly-embedded finite-type subsurface of $S$. If $\Sigma$ is not compact, then we additionally assume either that $p_S = 0$ or that $S$ has a mixed end. For each integer $i\geq 1$, there exists another properly-embedded subsurface $\Sigma' \subseteq S$ such that:
\begin{enumerate}[nosep,label=\textup{(\arabic*)}]
\item\label{prop:transfer:intersection} $\Sigma \cap \Sigma'$ is an interval in $\partial\Sigma$ and in $\partial\Sigma'$;
\item\label{prop:transfer:shiftable} $\Sigma'$ is shiftable in $S$;
\item\label{prop:transfer:extension-map-surjective} the extension map $\Map(\Sigma') \to \Map(\Sigma \cup \Sigma')$ is surjective on homology up to degree $i$.
\end{enumerate}
\end{prop}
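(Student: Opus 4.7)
The plan combines Harer's homological stability theorem for mapping class groups of finite-type surfaces with an explicit topological construction of a shiftable subsurface of $S$ sharing a boundary arc with $\Sigma$.

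\textbf{Step 1: reduction via homological stability.}
By iterated application of Harer's homological stability theorem, along with its refinements for surfaces with boundary components and punctures, there exists a positive integer $G = G(i, \Sigma)$ with the following property: whenever $\Sigma' \subset S$ is a properly-embedded finite-type subsurface meeting $\Sigma$ only along an arc $I \subset \partial \Sigma$, having one boundary component (which contains $I$), and of genus at least $G$, the extension homomorphism $\Map(\Sigma') \to \Map(\Sigma \cup \Sigma')$ is surjective on $H_k$ (with any coefficients) for every $k \leq i$. The threshold $G$ depends on $i$ and on the topological type of $\Sigma$, but is independent of the puncture count of $\Sigma'$. It therefore suffices to construct a shiftable $\Sigma' \subseteq S$ of genus at least $G$ satisfying conditions \ref{prop:transfer:intersection} and \ref{prop:transfer:shiftable}.

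\textbf{Step 2: topological construction of $\Sigma'$.}
The strategy is to identify an end $e$ of $S$ near which we can embed a proper copy of $\Str(\Sigma_{G,1})$ in $S \smallsetminus \Sigma$, take $\Sigma'$ to be the first piece of this embedded strip, and attach $\Sigma'$ to $\Sigma$ along an arc via a thin connecting tube. We split into cases:
\begin{itemize}[nosep]
\item \emph{$\Sigma$ compact, or $\Sigma$ non-compact with $p_S = 0$:} let $e$ be any end of $S$ accumulated by genus (such an $e$ exists because $g_S = \infty$). Since $\Sigma$ is of finite type, it occupies only a bounded-genus portion of a neighborhood of $e$, leaving room in $S \smallsetminus \Sigma$ near $e$ for the infinite chain $\Str(\Sigma_{G,1})$ together with a connecting strip from $\partial \Sigma$.
\item \emph{$\Sigma$ non-compact and $S$ has a mixed end $e$:} use $e$ itself; its accumulation by genus provides room for the chain, while its accumulation by punctures provides the flexibility for the connecting strip to stay disjoint from $\Sigma$ even when $\Sigma$'s punctures lie near $e$.
\end{itemize}
In each case, after possibly enlarging $\Sigma$ slightly by a compact subsurface to arrange that the chosen arc $I \subset \partial\Sigma$ faces the relevant complementary region, the resulting $\Sigma'$ is homeomorphic to $\Sigma_{G, 1}$. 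Conditions \ref{prop:transfer:intersection} and \ref{prop:transfer:shiftable} hold by construction, and \ref{prop:transfer:extension-map-surjective} follows from Step 1.

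\textbf{Main obstacle.}
The technical heart lies in Step 2. The delicate point is ensuring, in the non-compact cases, that the shiftable chain together with the connecting strip can be realized disjointly from $\Sigma$, despite the fact that the ends of $\Sigma$ may lie near the chosen end $e$. The hypothesis $p_S = 0$ (respectively, the mixed-end hypothesis) is precisely what guarantees enough room in $S \smallsetminus \Sigma$ near $e$ to carry out this surgery; in the border cases not covered by the proposition, namely $\Sigma$ non-compact with $0 < p_S < \infty$ or with $p_S = \infty$ and no mixed end, the construction of a suitable shiftable $\Sigma'$ with a connecting arc to $\partial\Sigma$ breaks down, which is consistent with the limitations stated for Theorem \ref{mainthm-infinite-genus}.
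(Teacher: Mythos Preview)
Your overall strategy matches the paper's---homological stability plus a shiftable one-boundary $\Sigma'$ meeting $\Sigma$ in an arc---but Step~1 contains a genuine error when $\Sigma$ is non-compact. You claim that high genus of $\Sigma'$ alone forces $\Map(\Sigma')\to\Map(\Sigma\cup\Sigma')$ to be surjective on $H_k$ for $k\le i$, independent of the puncture count. This fails: take $\Sigma\cong\Sigma_{0,1}^1$ and your $\Sigma'\cong\Sigma_{G,1}$, so $\Sigma\cup\Sigma'\cong\Sigma_{G,1}^1$. Then $\Map(\Sigma_{G,1})\to\Map(\Sigma_{G,1}^1)$ is \emph{never} surjective on $H_2$, since by the B\"odigheimer--Tillmann splitting (cf.~\S\ref{s:BT-construction}) the target stably acquires an extra $\bZ$ summand from $H_2(\bC\bP^\infty)$, detected by the tangent-direction map $\tau$, which is constant on the image of $\Map(\Sigma_{G,1})$. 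The puncture-increasing stabilisation is $H_i$-surjective only in a range governed by the number of punctures already present (Theorem~\ref{thm:hmstb}\ref{thm:hmstb-puncture}), not by the genus. Consequently a compact $\Sigma'$ cannot satisfy condition~\ref{prop:transfer:extension-map-surjective} when $\Sigma$ has punctures, and your Step~2 construction breaks down in exactly that case.

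The paper's repair is to take $\Sigma'\cong\Sigma_{h,1}^m$ with $h\ge\tfrac32 i$ \emph{and} $m\ge 2i$, so that the extension map factors into genus-, boundary- and puncture-increasing maps each surjective in the required range. This is precisely where the mixed-end hypothesis is genuinely used: one needs an end accumulated by both genus and punctures in order to place such a punctured one-boundary $\Sigma'$ in $S$ disjoint from $\Sigma$ and then to see that it is shiftable. Your Step~2 invokes the mixed end for the wrong reason (routing a connecting strip) rather than the real one (supplying $\Sigma'$ with enough punctures). Separately, the paper handles shiftability more cleanly via a standalone lemma---any properly-embedded one-boundary subsurface, under the proposition's hypotheses, is shiftable---and applies it directly to $\Sigma'$, avoiding the awkwardness of absorbing the connecting tube into an explicit strip.
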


This proposition, along with Corollary \ref{coro:shiftable}, quickly implies the vanishing results of Theorem \ref{mainthm-infinite-genus}.

\begin{proof}[Proof of the vanishing results of Theorem \ref{mainthm-infinite-genus} assuming Proposition \ref{prop:transfer-to-shiftable}]
Let $S$ and its subsurface $\Sigma \subset S$ be as in Proposition \ref{prop:transfer-to-shiftable}; we need to prove that the homomorphism $\Map(\Sigma) \to \Map(S)$ induces the zero map on homology with field coefficients in all positive degrees. Let $K$ be a field and fix a homological degree $i\geq 1$. Let the subsurface $\Sigma' \subset S$ be as in the conclusion of Proposition \ref{prop:transfer-to-shiftable}. Since $\Sigma'$ is shiftable, we know from Corollary \ref{coro:shiftable} that the induced map $H_i(\Map(\Sigma');K) \to H_i(\Map(S);K)$ is zero. Since the intersection of $\Sigma$ and $\Sigma'$ is an interval in each of their boundaries, their union in $S$ is their boundary connected sum, and we may consider the extension map $\Map(\Sigma') \to \Map(\Sigma \cup \Sigma')$, which by part \ref{prop:transfer:extension-map-surjective} of Proposition \ref{prop:transfer-to-shiftable} induces a surjection $H_i(\Map(\Sigma');K) \twoheadrightarrow H_i(\Map(\Sigma\cup\Sigma');K)$. From the commutative diagram of homomorphisms induced by extension maps
\[
\begin{tikzcd}
H_i(\Map(\Sigma);K) \ar[dr] \ar[drrr] &&& \\
& H_i(\Map(\Sigma\cup\Sigma');K) \ar[rr] && H_i(\Map(S);K) \\
H_i(\Map(\Sigma');K) \ar[ur,two heads] \ar[urrr,"0",swap] &&&
\end{tikzcd}
\]
it then follows that $H_i(\Map(\Sigma);K) \to H_i(\Map(S);K)$ is also the zero map.
\end{proof}

The proof of Proposition \ref{prop:transfer-to-shiftable} has two ingredients: Lemma \ref{lem:infinite-genus-shiftable} and Theorem \ref{thm:hmstb}.

\begin{lem}
\label{lem:infinite-genus-shiftable}
Let $S$ and its subsurface $\Sigma \subset S$ be as in Proposition \ref{prop:transfer-to-shiftable}. If $\Sigma$ has exactly one boundary component, then it is shiftable.
\end{lem}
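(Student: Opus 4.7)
The aim is to extend the given inclusion $\Sigma\subset S$ to a proper embedding $\Str(\Sigma)\hookrightarrow S$, or equivalently to place pairwise-disjoint copies $\Sigma_0=\Sigma,\Sigma_1,\Sigma_2,\ldots$ of $\Sigma$ in $S$ together with annular connecting strips that assemble into a properly embedded strip surface. Since $\Sigma$ has exactly one boundary component $C$, this reduces to finding a proper embedding of $\Str(\Sigma)\smallsetminus\mathrm{int}(\Sigma_0)$ into the complement $S':=\overline{S\smallsetminus\Sigma}$ whose unique boundary matches $C$. The complement $S'$ is a connected surface with boundary $C$, has infinite genus (since $g_S=\infty$ while $\Sigma$ has finite genus), and carries an end-space obtained from $\Ends(S)$ by removing the finitely many end-regions enclosed by $\Sigma$. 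The main tool throughout will be the classification of surfaces (Theorem~\ref{thm:clas-inf-sur}).

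I would treat the compact case first. If $\Sigma$ is compact, then $\Str(\Sigma)$ is a one-ended, infinite-genus surface whose only non-trivial feature is the accumulation of handles at a single end, and $S'$ contains an end accumulated by genus (since $g_S=\infty$). I would then build the strip inductively: having placed $\Sigma_0,\ldots,\Sigma_n$ with their connecting annuli, the leftover piece of $S'$ is still a connected surface with one boundary component, infinite genus, and the same end-space as $S'$, so by Theorem~\ref{thm:clas-inf-sur} it contains a further copy of $\Sigma$ which can be glued to the last annulus. Choosing $\Sigma_{n+1}$ to lie outside the $n$-th term of a fixed compact exhaustion of $S$ guarantees properness of the resulting embedding.

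For the non-compact case, $\Sigma$ has $k\geq 1$ ends, each approached through some specific end of $S$ via a thin neighbourhood. The extra hypothesis of Proposition~\ref{prop:transfer-to-shiftable} is precisely what one needs to fit infinitely many parallel copies of this end-pattern:
\begin{itemize}
\item If $p_S=0$, then no end of $\Sigma$ is a puncture of $S$, so each is either accumulated by genus or is a limit of other ends of $S$; in the former case I would use parallel ``thin collars'' inside a Loch Ness-monster-like region, and in the latter case disjoint pockets of nearby ends clustering towards the same end of $S$, in both cases recognising the relevant complement via Theorem~\ref{thm:clas-inf-sur}.
\item If $S$ has a mixed end $e$, then every neighbourhood of $e$ contains both infinite genus and infinitely many accumulating isolated ends, so an entire sequence of disjoint copies of $\Sigma$ (with matching end-pattern) can be arranged along a sequence converging to $e$.
\end{itemize}
In either sub-case the inductive argument from the compact case then assembles the strip.

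\emph{Main obstacle.} The most delicate point is the non-compact case with $p_S=0$: the ends of $\Sigma$ may sit inside ends of $S$ of subtle local topology, and producing even a single disjoint parallel translate of $\Sigma$ in $S'$ with the exact same end-pattern requires a careful identification of the local structure of $\Ends(S)$ near each of the relevant ends, together with an application of Theorem~\ref{thm:clas-inf-sur}. Once one such parallel translate has been constructed, iterating to obtain infinitely many is routine, exactly as in the compact case.
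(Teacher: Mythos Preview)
Your direct inductive construction is workable for the compact case, but the paper takes a simpler two-step route: first properly embed $\Str(\Sigma_{g,1})$ (or $\Str(\Sigma_{g,1}^n)$) somewhere in $S$, converging to a chosen non-planar (or mixed) end --- this is immediate from Richards' construction of $S$ --- and then use that shiftability is preserved by self-homeomorphisms of $S$ to assume, after a change of coordinates, that the given $\Sigma$ is the left-most slot of that strip. This avoids all of your step-by-step bookkeeping about complements, end-patterns and properness.

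More importantly, the sub-case you single out as the ``main obstacle'' --- $\Sigma$ non-compact with $p_S=0$ --- is vacuous. Any puncture of a properly embedded finite-type $\Sigma\subset S$ is in fact a puncture of $S$: a closed half-open annular neighbourhood $N\cong S^1\times[0,1)$ of that puncture inside $\Sigma$ is also closed in $S$, its manifold interior $N\smallsetminus\partial N$ is then clopen in $S\smallsetminus\partial N$ and hence an entire complementary component of the circle $\partial N$, so every sufficiently small connected end-neighbourhood of the corresponding end $e$ of $S$ lies inside $N$, forcing $e$ to be isolated and planar. Thus $\Sigma$ non-compact already forces $p_S>0$, and under the hypotheses of Proposition~\ref{prop:transfer-to-shiftable} this places you automatically in the mixed-end situation. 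The paper uses exactly this observation (asserted without proof) to dispose of the non-compact case in a single line; you instead devote most of your outline to a sub-case that cannot occur, and your sketch for it is in any event too vague to constitute an argument.
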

\begin{proof}
For any infinite-type surface $S$, it follows from the construction in \cite[\S 5]{Ri63} that, if $e$ is a \emph{non-planar} end of $S$ and $g$ is any non-negative integer then there exists a proper embedding $\Str(\Sigma_{g,1}) \hookrightarrow S$ of the infinite strip surface such that all unbounded sequences in $\Str(\Sigma_{g,1})$ converge to $e$ in the Freudenthal compactification $\overline{S}$ of $S$ (cf.\ \S\ref{ss:classification}). Similarly, if $e$ is a \emph{mixed} end of $S$ (Definition \ref{def:mixed-end}) and $g,n$ are non-negative integers then there exists a proper embedding $\Str(\Sigma_{g,1}^n) \hookrightarrow S$ such that all unbounded sequences in $\Str(\Sigma_{g,1}^n)$ converge to $e$ in $\overline{S}$.

Putting ourselves now in the setting of Lemma \ref{lem:infinite-genus-shiftable}, suppose first that $\Sigma$ is compact, so it is homeomorphic to $\Sigma_{g,1}$ for some $g\geq 0$. Since $g_S = \infty$ there is at least one non-planar end $e$ of $S$, so we may choose a proper embedding $\Str(\Sigma_{g,1}) \subset S$ as in the previous paragraph. Since the property of being shiftable is preserved under self-homeomorphisms of $S$, we may assume by applying an appropriate self-homeomorphism of $S$ that the subsurface $\Sigma \subset S$ is the subsurface of $\Str(\Sigma_{g,1}) \subset S$ corresponding to the left-most copy of $\Sigma_{g,1}$ in the infinite strip (cf.~Figure \ref{fig:infinite-strip-surface}). Thus $\Sigma \subset S$ is shiftable.

Now suppose that $\Sigma$ is non-compact, so it is homeomorphic to $\Sigma_{g,1}^n$ for some $g\geq 0$ and $n\geq 1$. This implies that $p_S > 0$, which by assumption means that $S$ has a mixed end $e$, and so we may choose a proper embedding $\Str(\Sigma_{g,1}^n) \subset S$ as in the first paragraph of the proof. As above, we may assume by applying a self-homeomorphism of $S$ that the subsurface $\Sigma \subset S$ is the subsurface of $\Str(\Sigma_{g,1}^n) \subset S$ corresponding to the left-most copy of $\Sigma_{g,1}^n$ in the infinite strip. Thus $\Sigma \subset S$ is shiftable.
\end{proof}

The second ingredient is a collection of homological stability results for mapping class groups of connected, finite-type, orientable surfaces. We recall just the statements about \emph{surjectivity}, since these are all that we shall need.

\begin{thm}
\label{thm:hmstb}
The genus-increasing, boundary-component-increasing, puncture-increasing and capping maps, which are each defined by extending homeomorphisms by the identity, induce surjections on homology in the following ranges of degrees.
\begin{enumerate}[nosep,label=\textup{(\arabic*)}]
\item\label{thm:hmstb-genus} The map $H_i(\Sigma_{g,b}^n) \to H_i(\Sigma_{g,b}^n \natural \Sigma_{1,1}) = H_i(\Sigma_{g+1,b}^n)$ is surjective for $g\geq \frac{3}{2}i$.
\item\label{thm:hmstb-bdy} The map $H_i(\Sigma_{g,b}^n) \to H_i(\Sigma_{g,b}^n \natural \Sigma_{0,2}) = H_i(\Sigma_{g,b+1}^n)$ is surjective for $g\geq \frac{3}{2}i$.
\item\label{thm:hmstb-capping} The map $H_i(\Sigma_{g,1}^n) \to H_i(\Sigma_{g}^n)$ filling the boundary circle with a disc is surjective for $g\geq \frac{3}{2}(i-1)$.
\item\label{thm:hmstb-puncture} The map $H_i(\Sigma_{g,b}^n) \to H_i(\Sigma_{g,b}^n \natural \Sigma_{0,1}^1) = H_i(\Sigma_{g,b}^{n+1})$ is surjective for $n\geq 2i$.
\end{enumerate}
\end{thm}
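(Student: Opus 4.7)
The plan is to deduce each of the four surjectivity statements by direct citation of the homological stability literature for mapping class groups of finite-type orientable surfaces, since Theorem \ref{thm:hmstb} merely records the pieces of that literature that are needed later. All four statements go back to Harer's seminal work, with the sharp slope-$\tfrac{3}{2}$ ranges in parts \ref{thm:hmstb-genus}, \ref{thm:hmstb-bdy}, \ref{thm:hmstb-capping} and the slope-$2$ range in part \ref{thm:hmstb-puncture} coming from subsequent refinements.

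For parts \ref{thm:hmstb-genus} and \ref{thm:hmstb-bdy}, which are the genus-increasing and boundary-component-increasing stabilization maps on $H_i(\Map(\Sigma_{g,b}^n))$, the optimal slope-$\tfrac{3}{2}$ surjectivity range is due to Boldsen, building on the successive improvements of Harer and Ivanov. The same range can also be obtained as an instance of the general homological stability machinery of Randal-Williams--Wahl, applied to the braided monoidal groupoid of decorated surfaces; this gives an independent derivation and is the form in which the result is most easily transported across different choices of boundary decoration. In particular, this machinery ensures that the presence of extra boundary components and punctures (the parameters $b$ and $n$) does not degrade the stability range.

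For part \ref{thm:hmstb-capping}, the capping map fits into the same stability framework: gluing a disc to a boundary circle is one of the standard stabilization maps in Harer's original setup, and Boldsen's slope-$\tfrac{3}{2}$ bound for surjectivity (shifted by one because one loses a boundary component rather than gaining genus) gives the stated range $g \geq \tfrac{3}{2}(i-1)$.

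For part \ref{thm:hmstb-puncture}, the puncture-increasing map has the weaker slope-$2$ range; this is the version of homological stability with respect to adding marked points, which can be read off from the Hatcher--Wahl stability theorem for mapping class groups of surfaces with marked points, or alternatively deduced via the Birman exact sequence together with a spectral sequence argument using stability in the genus direction. The main (and essentially only) obstacle in writing this all up is bookkeeping: one must match the conventions of each cited source (whether boundary circles or punctures are considered, whether stability is stated for injectivity, surjectivity or isomorphism, and exactly where the critical degree sits) so as to extract precisely the surjectivity ranges stated above.
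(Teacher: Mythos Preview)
Your proposal is correct and takes essentially the same approach as the paper: both treat Theorem \ref{thm:hmstb} as a collection of citations to the homological stability literature, attributing parts \ref{thm:hmstb-genus}--\ref{thm:hmstb-capping} to Harer with the sharp ranges due to Ivanov, Boldsen and Randal-Williams, and part \ref{thm:hmstb-puncture} to Hatcher--Wahl. The only minor differences are in emphasis---you invoke the Randal-Williams--Wahl machinery and mention the Birman exact sequence as an alternative route to \ref{thm:hmstb-puncture}, whereas the paper simply points to Wahl's survey for the best-known ranges---but these are cosmetic.
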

\begin{proof}
Parts \ref{thm:hmstb-genus}--\ref{thm:hmstb-capping} are all due to Harer \cite[Theorem 0.1]{Har85}, except with a larger lower bound on $g$ (Harer does not directly consider the capping map, but part \ref{thm:hmstb-capping} follows indirectly from his results about his map called $\eta$). Improvements to this lower bound were made by Ivanov \cite{Ivanov1993}, Boldsen \cite{Boldsen2012} and Randal-Williams \cite{Randal-Williams2016}; see also the survey by Wahl \cite{Wahl_survey}, which gives the best-known ranges. Part \ref{thm:hmstb-puncture} is due to Hatcher-Wahl \cite[Proposition~1.5]{HW10}.
\end{proof}

\begin{proof}[Proof of Proposition \ref{prop:transfer-to-shiftable}]
Assume first that $\Sigma$ is compact, so it is homeomorphic to $\Sigma_{g,b}$ for some $g\geq 0$ and $b\geq 1$. Since $g_S = \infty$, the definition of $g_S$ implies that we may find another subsurface $\Sigma'' \subset S$, disjoint from $\Sigma$, that is homeomorphic to $\Sigma_{h,1}$ for a genus $h$ as large as we choose. Let us choose $h \geq \frac{3}{2}i$. Since $S$ is path-connected, we may choose a path from a point on the boundary of $\Sigma$ to a point on the boundary of $\Sigma''$ and whose interior is contained in $S \smallsetminus (\Sigma \sqcup \Sigma'')$. Let $\Sigma'$ be the union of $\Sigma''$ and a tubular neighbourhood of this path; this is again homeomorphic to $\Sigma_{h,1}$. It satisfies condition \ref{prop:transfer:intersection} of the proposition by construction. Since it has exactly one boundary component, it satisfies condition \ref{prop:transfer:shiftable} of the proposition by Lemma \ref{lem:infinite-genus-shiftable}. Since $h \geq \frac{3}{2}i$, it satisfies condition \ref{prop:transfer:extension-map-surjective} of the proposition by parts \ref{thm:hmstb-genus} and \ref{thm:hmstb-bdy} of Theorem \ref{thm:hmstb}, since the extension map $\Map(\Sigma') \to \Map(\Sigma\cup\Sigma')$ may be factored into finitely many genus-increasing maps and finitely many boundary-component-increasing maps.

Now suppose that $\Sigma$ is non-compact, so it is homeomorphic to $\Sigma_{g,b}^n$ for some $g\geq 0$ and $b,n\geq 1$. This implies that $S$ has at least one puncture, i.e.~$p_S > 0$, so by assumption $S$ has a mixed end, in particular $p_S = \infty$. The proof is then the same as in the previous paragraph, except that we choose $\Sigma''$ to be homeomorphic to $\Sigma_{h,1}^m$ for $h \geq \frac{3}{2}i$ and $m\geq 2i$, using the fact that $g_S = p_S = \infty$. The rest of the proof is then identical, except that to verify condition \ref{prop:transfer:extension-map-surjective} of the proposition we also need part \ref{thm:hmstb-puncture} of Theorem \ref{thm:hmstb}, factoring the extension map $\Map(\Sigma') \to \Map(\Sigma\cup\Sigma')$ into finitely many genus-increasing maps, boundary-component-increasing maps and puncture-increasing maps.
\end{proof}

\begin{rem}
Part \ref{thm:hmstb-bdy} of Theorem \ref{thm:hmstb} is notable in that, when increasing the number $b$ of boundary components, the range in which homological stability holds depends on the genus $g$, not on $b$. This was crucial in the proof of Proposition \ref{prop:transfer-to-shiftable} above, since we were free to choose $\Sigma'$ to have as high genus and as many punctures as necessary, but it had to have a single boundary component, in order to be able to apply Lemma \ref{lem:infinite-genus-shiftable}.
\end{rem}

\section{Genus zero surfaces with countably infinitely many punctures}
\label{s:genus-zero}

In this section we prove Theorem \ref{mainthm-genus-zero-infinite-punctures-2}, concerning the case when $S$ has genus zero and its space of ends is a closed ordinal interval of the form $[0,\omega^\alpha]$. The proof is different when $\alpha$ is a (countable) successor ordinal and when it is a (countable) limit ordinal; we will deal with these two cases separately -- see Proposition \ref{prop:omega-alpha-successor} and Corollary \ref{cor:omega-alpha-limit}.

\begin{defn}
\label{def:Sigma-alpha}
For a countable ordinal $\alpha$, let us write $\Sigma(\alpha) = \bS^2 \smallsetminus [0,\omega^\alpha]$ and $\Sigma^\circ(\alpha) = \bD^2 \smallsetminus [0,\omega^\alpha]$. In other words, up to homeomorphism, $\Sigma(\alpha)$ is the unique genus-zero surface whose space of ends is homeomorphic to $[0,\omega^\alpha]$ and $\Sigma^\circ(\alpha)$ is the result of removing the interior of a closed disc from $\Sigma(\alpha)$.
\end{defn}

\subsection{Successor ordinals.}

Let us first suppose that $\alpha$ is a successor ordinal, in other words $\alpha = \beta + 1$ for some ordinal $\beta$. In this case, $\Sigma(\alpha)$ may be realised as a (full-plane) grid surface:

\begin{lem}
\label{lem:Sigma-alpha-grid-surface}
There is a homeomorphism $\Gr_\bZ(\Sigma^\circ(\beta)) \cong \Sigma(\alpha)$.
\end{lem}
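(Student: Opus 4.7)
The plan is to apply the classification of surfaces (Theorem \ref{thm:clas-inf-sur}): since both $\Gr_\bZ(\Sigma^\circ(\beta))$ and $\Sigma(\alpha)$ will turn out to be genus-zero surfaces without boundary, it suffices to show that their end-spaces are homeomorphic (the subspace of non-planar ends is empty in both cases because both surfaces are planar). The end-space of $\Sigma(\alpha) = \bS^2 \smallsetminus [0,\omega^\alpha]$ is tautologically $[0,\omega^\alpha] = [0,\omega^{\beta+1}]$, so the task reduces to computing the end-space of the grid surface and identifying it with $[0,\omega^{\beta+1}]$.

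First I would observe that $\Sigma^\circ(\beta)$ is planar with exactly one boundary circle, so its closure $\overline{\Sigma^\circ(\beta)}$ (after gluing on a collar annulus and identifying $\partial \overline{\Sigma^\circ(\beta)}$ with $\partial[0,1]^2$) can be realised as a genus-zero tile whose interior ends form a copy of $[0,\omega^\beta]$. Gluing $\bZ\times\bZ$ copies of this tile in a planar grid according to Definition \ref{def:grid-surface} yields a connected, genus-zero, boundaryless surface whose underlying ``skeleton'' $\bZ\times\bZ \times \partial\overline{\Sigma^\circ(\beta)}$ is combinatorially the square grid in $\bR^2$. In particular $\Gr_\bZ(\Sigma^\circ(\beta))$ admits a proper embedding into $\bR^2$ (essentially covering the plane by the tiles), so it is planar.

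Next I would analyse its ends. Each tile contributes a closed neighbourhood of a copy of $[0,\omega^\beta]$ of ends, and these closed sets are pairwise disjoint and locally finite in the grid surface. Any end not coming from one of these tiles must be represented by a sequence of tiles going to infinity in $\bZ\times\bZ$; since the complement of any bounded region of the grid is connected, there is exactly one such ``grid-infinity'' end. Carefully, in the Freudenthal compactification, a sequence of tile-ends converges to the grid-infinity end precisely when the indices $(i,j)\in\bZ^2$ of the tiles tend to infinity. This identifies $\Ends(\Gr_\bZ(\Sigma^\circ(\beta)))$ with the one-point compactification of the countable disjoint union $\bigsqcup_{(i,j)\in\bZ\times\bZ}[0,\omega^\beta]$.

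Finally, by the second bullet of Lemma \ref{lem:ordinal-interval-calculations}, this one-point compactification is homeomorphic to $[0,\omega^{\beta+1}] = [0,\omega^\alpha]$. Combined with the fact that both surfaces are planar (so $\Ends_{np} = \varnothing$ in both cases), Theorem \ref{thm:clas-inf-sur} gives the homeomorphism $\Gr_\bZ(\Sigma^\circ(\beta)) \cong \Sigma(\alpha)$. The main obstacle I anticipate is the careful justification of the end-space computation -- in particular, verifying that the grid-infinity end is genuinely a single end and that the tile-ends accumulate onto it exactly along the grid direction; this is essentially a sanity check on the Freudenthal compactification of a periodic planar surface, but it is worth writing out explicitly before quoting Lemma \ref{lem:ordinal-interval-calculations}.
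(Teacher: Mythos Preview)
Your proposal is correct and follows essentially the same approach as the paper: use the classification of surfaces to reduce to an end-space computation, identify the end-space of the grid surface as the one-point compactification of countably many copies of $[0,\omega^\beta]$, and then invoke Lemma~\ref{lem:ordinal-interval-calculations} to conclude this is $[0,\omega^{\beta+1}]=[0,\omega^\alpha]$. The paper's proof is terser---it simply asserts that the grid surface has genus zero and that ``by construction'' its end-space is this one-point compactification---whereas you spell out the planarity and the uniqueness of the grid-infinity end, but the underlying argument is the same.
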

\begin{proof}
Clearly $\Gr_\bZ(\Sigma^\circ(\beta))$ has genus zero, so by the classification of surfaces it suffices to show that its space of ends is homeomorphic to $[0,\omega^\alpha]$. By construction, its space of ends is the one-point compactification of disjoint union of countably infinitely many copies of $[0,\omega^\beta]$; by Lemma \ref{lem:ordinal-interval-calculations} this is $[0,\omega^\alpha]$.
\end{proof}

Half of Theorem \ref{mainthm-genus-zero-infinite-punctures-2} -- the case when $\alpha$ is a successor ordinal -- is given by the following.

\begin{prop}
\label{prop:omega-alpha-successor}
Suppose that $\alpha$ is a countable successor ordinal. Then the inclusion
\[
\iota_f(\Sigma(\alpha)) \colon \Map_f(\Sigma(\alpha)) \longhookrightarrow \Map(\Sigma(\alpha))
\]
induces the zero map on homology in positive degrees with any field coefficients.
\end{prop}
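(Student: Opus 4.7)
The plan is to exploit the grid-surface identification $\Sigma(\alpha) \cong \Gr_\bZ(\Sigma^\circ(\beta))$ provided by Lemma \ref{lem:Sigma-alpha-grid-surface} (available because $\alpha = \beta+1$ is a successor), combined with the shiftability criterion of Corollary \ref{coro:shiftable} applied to large rectangular sub-grids. Fix such an identification. Since homology commutes with filtered colimits and $\Map_f(\Sigma(\alpha))$ is the union of the images of $\Map(\Sigma) \hookrightarrow \Map(\Sigma(\alpha))$ taken over $\Sigma \in \fF(\Sigma(\alpha))$, it suffices to show that for each finite-type $\Sigma \in \fF(\Sigma(\alpha))$ the induced map on $H_i(-;K)$ vanishes for all $i \geq 1$.

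For each integer $N \geq 0$, let $R_N := \Gr_{[-N,N]^2}(\Sigma^\circ(\beta))$, a properly-embedded subsurface of $\Sigma(\alpha)$ consisting of a $(2N+1)\times(2N+1)$ array of blocks and possessing a single boundary circle (the perimeter of the outer rectangle). The first step is to verify that each $R_N$ is shiftable in $\Sigma(\alpha)$. Translating $R_N$ horizontally by $(2N+1,0)$ in the grid produces an edge-adjacent copy, so iterating yields a proper embedding $\Str(R_N) \hookrightarrow \Sigma(\alpha)$ whose image is the half-slab $\Gr_{[-N,\infty) \times [-N,N]}(\Sigma^\circ(\beta))$. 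Properness is clear because any compact subset of $\Sigma(\alpha)$ stays bounded away from the top end $\omega^\alpha$ and so meets only finitely many of the translates. Corollary \ref{coro:shiftable} then implies that $H_i(\Map(R_N); K) \to H_i(\Map(\Sigma(\alpha)); K)$ is zero in every positive degree.

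The second step is to show that an arbitrary finite-type $\Sigma \in \fF(\Sigma(\alpha))$ can, after an ambient self-homeomorphism of $\Sigma(\alpha)$, be arranged to lie inside some $R_N$. The punctures of $\Sigma$ form a finite subset $P \subset \Ends(\Sigma(\alpha)) = [0, \omega^\alpha]$, and none of them is the top end $\omega^\alpha$: that end is non-isolated in $[0,\omega^\alpha]$ and so cannot be an isolated end of any finite-type subsurface. Hence each $p \in P$ lies strictly inside some particular block of the grid, and for $N$ large enough all these blocks are contained in $[-N,N]^2$. A change-of-coordinates argument for infinite-type surfaces (using, for instance, the techniques of \cite{HMV19}) then produces a self-homeomorphism of $\Sigma(\alpha)$ carrying $\Sigma$ to a copy $\Sigma' \subset R_N$ of the same topological type with punctures at the same elements of $P$; the complementary components can be matched up since both complements have genus zero and identical end-space $[0,\omega^\alpha] \setminus P$, with compatible separations arising from the boundary circles.

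Since conjugation acts trivially on group homology, we may assume $\Sigma \subset R_N$; then $\Map(\Sigma) \to \Map(\Sigma(\alpha))$ factors through $\Map(R_N)$ and hence vanishes on $H_i(-;K)$ for $i \geq 1$ by the first step. I expect the main obstacle to be the rigorous justification of the change-of-coordinates step, which requires tracking how the ends of $\Sigma(\alpha)$ are distributed among the complementary components of $\Sigma$ and arranging the target $\Sigma' \subset R_N$ to realise the same distribution; the key combinatorial input beyond standard infinite-type surface techniques is the simple observation that $\omega^\alpha$ cannot be a puncture of any finite-type subsurface.
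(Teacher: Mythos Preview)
Your approach is essentially the paper's: identify $\Sigma(\alpha)$ with the full-plane grid $\Gr_\bZ(\Sigma^\circ(\beta))$, trap any finite-type subsurface inside a bounded rectangular block, and then invoke the grid/shiftability machinery. The paper phrases the final step slightly differently --- it ``zooms out'' by a factor of $n$ so that the $n\times n$ block becomes a single tile of a new grid $\Gr_\bZ(\natural^{n^2}\Sigma^\circ(\beta))$ and then applies Proposition~\ref{prop:grid-surface} directly --- but this is equivalent to your verification that $R_N$ is shiftable followed by Corollary~\ref{coro:shiftable}.

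Where you take an unnecessary detour is Step~2. No change-of-coordinates is needed: the observation you already make about the ends of $\Sigma$ forces $\Sigma$ itself to lie in some $R_N$. Indeed, $\partial\Sigma$ is compact (by the standing convention on surfaces), hence bounded in the grid; combined with your observation that no end of $\Sigma$ lies at $\omega^\alpha$, the closure $\bar\Sigma$ of $\Sigma$ in $\bS^2$ is a compact subsurface missing $\omega^\alpha$, so $\Sigma\subset R_N$ for some $N$. Concretely: outside a large square containing $\partial\Sigma$, the surface $\Sigma(\alpha)$ is connected, so lies either entirely in $\Sigma$ --- impossible, since $\Sigma$ would then have infinitely many punctures --- or entirely in its complement. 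The paper makes exactly this one-line boundedness observation (``$\Sigma$ must be bounded, since it is of finite type and therefore must be bounded away from the non-isolated end at infinity''). Your change-of-coordinates sketch is plausible but, as you anticipated, would need real work to justify; happily it is simply not required.
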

\begin{proof}
Let $\Sigma \subset \Sigma(\alpha)$ be a properly-embedded subsurface of finite type and denote by $\iota$ the homomorphism $\Map(\Sigma) \to \Map(\Sigma(\alpha))$ given by extending homeomorphisms by the identity. Identifying $\Sigma(\alpha)$ with the grid surface $\Gr_\bZ(\Sigma^\circ(\beta))$ by Lemma \ref{lem:Sigma-alpha-grid-surface}, the subsurface $\Sigma \subset \Gr_\bZ(\Sigma^\circ(\beta))$ must be bounded, since it is of finite type and therefore must be bounded away from the non-isolated end ``at infinity''. Hence $\Sigma$ is contained in a sub-square of the grid of side-length $n$ for some $n \geq 1$. Zooming out by a factor of $n$, we may identify $\Gr_\bZ(\Sigma^\circ(\beta))$ with the grid surface $\Gr_\bZ(\natural^{n^2}\Sigma^\circ(\beta))$, in which each ``piece'' of the grid is the boundary connected sum of $n^2$ copies of $\Sigma^\circ(\beta)$. Applying an appropriate shift homeomorphism, we may assume that $\Sigma$ is contained in the copy of $\natural^{n^2}\Sigma^\circ(\beta)$ at the coordinates $(0,0)$ in the grid. The homomorphism $\iota$ therefore factors as
\[
\Map(\Sigma) \longrightarrow \Map(\natural^{n^2}\Sigma^\circ(\beta)) \longrightarrow \Map(\Gr_\bZ(\natural^{n^2}\Sigma^\circ(\beta))) = \Map(\Sigma(\alpha)),
\]
where each homomorphism is given by extending homeomorphisms by the identity. The result therefore follows by applying Proposition \ref{prop:grid-surface} to the surface $\natural^{n^2}\Sigma^\circ(\beta)$.
\end{proof}

\subsection{Limit ordinals.}

\begin{figure}[t]
    \centering
    \includegraphics[scale=0.7]{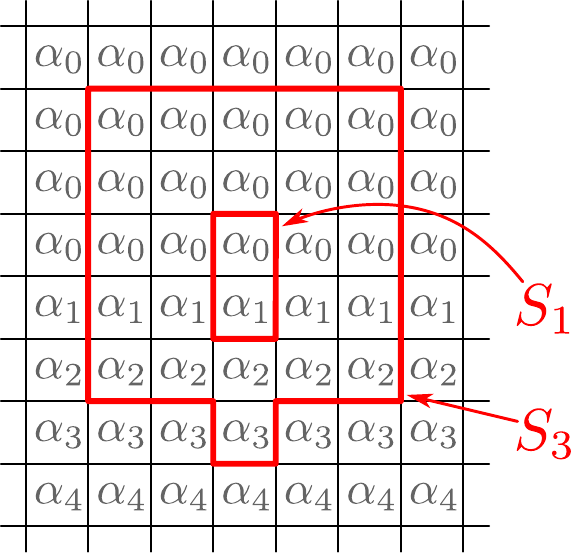}
    \caption{The surface $S \cong \Sigma(\lambda)$ from the proof of Proposition \ref{prop:omega-alpha-limit}, where each square labelled by $\alpha_n$ denotes a copy of $\Sigma^\circ(\alpha_n) = \bD^2 \smallsetminus [0,\omega^\alpha]$ (see Definition \ref{def:Sigma-alpha}). There is an exhaustive filtration of $S$ by properly-embedded subsurfaces $S_n \cong \Sigma^\circ(\alpha_n)$, described in the proof of Proposition \ref{prop:omega-alpha-limit}; the boundaries of $S_1$ and $S_3$ are outlined in red.}
    \label{fig:filtration-limit-ordinal}
\end{figure}

\begin{figure}[t]
    \centering
    \includegraphics[scale=0.7]{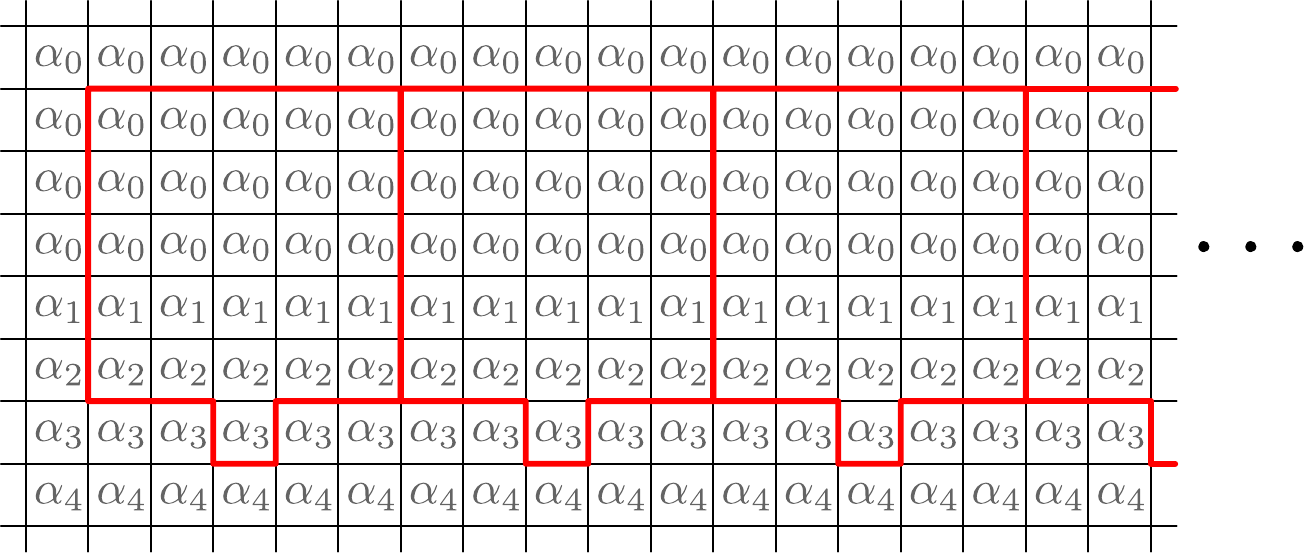}
    \caption{An extension of the inclusion $S_n \subset S$ (depicted in the case $n=3$) to a proper embedding $\Str(S_n) \hookrightarrow S$, proving that $S_n$ is shiftable in $S$.}
    \label{fig:filtration-limit-ordinal-shiftable}
\end{figure}

Let us now suppose that $\alpha = \lambda$ is a limit ordinal. Since it is also countable, its cofinality is precisely $\omega$ (see Remark \ref{rmk:cofinality}), meaning that there is a strictly increasing sequence $\alpha_n$ of ordinals (indexed by natural numbers $n \in \bN = \omega$) whose supremum is $\lambda$. Let us fix a choice of such a sequence for the remainder of this section.

\begin{prop}
\label{prop:omega-alpha-limit}
Let $\Sigma$ be a properly-embedded finite-type subsurface of $\Sigma(\lambda)$. Then $\Sigma$ is contained in a properly-embedded subsurface homeomorphic to $\Sigma^\circ(\alpha_n)$ for some $n \in \bN$. Moreover, this subsurface is shiftable in $\Sigma(\lambda)$.
\end{prop}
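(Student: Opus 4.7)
The plan is to model $S = \Sigma(\lambda)$ concretely as in Figure~\ref{fig:filtration-limit-ordinal}: a genus-zero surface built from an infinite strip by attaching a copy of $\Sigma^\circ(\alpha_i)$ at each $i \in \bN$. The third point of Lemma~\ref{lem:ordinal-interval-calculations}, applied to the sequence $(\alpha_i)$ with supremum $\lambda$, shows the end space of the resulting surface is $[0,\omega^\lambda]$, so it is homeomorphic to $\Sigma(\lambda)$ by the classification of surfaces. Define $S_n \subset S$ to consist of the first $n$ attached pieces together with the compact initial strip segment; then $S_n$ is genus zero with one boundary component and end space $\bigsqcup_{i=1}^n [0,\omega^{\alpha_i}] \cong [0,\omega^{\alpha_n}]$ by the first point of Lemma~\ref{lem:ordinal-interval-calculations} (since $\alpha_n$ is strictly the maximum), and so $S_n \cong \Sigma^\circ(\alpha_n)$.

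The \textbf{main obstacle} for the containment $\Sigma \subset S_n$ is the claim that every end of $\Sigma$ maps to an isolated point of $\Ends(S) = [0,\omega^\lambda]$; in particular none maps to the unique non-isolated point $\omega^\lambda$. I would prove this by fixing an open annular neighborhood $A \cong S^1 \times (0,\infty)$ of such an end in $\Sigma$, with $\partial A$ in the interior of $\Sigma$, and observing that $A$ is a connected component of $S \smallsetminus \partial A$. Indeed, any path in $S \smallsetminus \partial A$ starting in $A$ and leaving $A$ would have to exit $\Sigma$ through $\partial\Sigma$, yet reaching $\partial\Sigma$ from $A$ within $\Sigma$ already forces a crossing of $\partial A$. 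Hence $\{p\} \cup A$ is an open neighborhood in $\overline{S}$ of the corresponding end $p$ whose intersection with $\Ends(S)$ is only $\{p\}$, forcing $p$ to be isolated.

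Given isolatedness, each of the finitely many ends of $\Sigma$ sits at an ordinal strictly smaller than $\omega^\lambda = \sup_n \omega^{\alpha_n}$, so for $n$ large enough all of them lie in $\Ends(S_n) = [0,\omega^{\alpha_n}]$. Combining this with properness, which lets us arrange the annular tails of $\Sigma$ to lie inside $S_n$, and with the exhaustion $\bigcup_n S_n = S$, which contains the compact core of $\Sigma$ for $n$ large, we conclude that $\Sigma \subset S_n$ for $n$ sufficiently large.

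For the shiftability of $S_n$, I would construct a proper embedding $\Str(S_n) \hookrightarrow S$ extending the inclusion $S_n \subset S$ as follows. For each $k \geq 1$, since $\alpha_{n+k} > \alpha_n$, the end space $[0,\omega^{\alpha_{n+k}}]$ contains a clopen subset homeomorphic to $[0,\omega^{\alpha_n}]$ (for example $(0,\omega^{\alpha_n}]$, which by the Mazurkiewicz--Sierpi{\'n}ski classification is homeomorphic to $[0,\omega^{\alpha_n}]$), so inside the piece $\Sigma^\circ(\alpha_{n+k})$ one can carve out a sub-disc $\Sigma^{(k)}$ homeomorphic to $\Sigma^\circ(\alpha_n) \cong S_n$. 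Stringing the $\Sigma^{(k)}$ together using the strip of $S$ together with the complements of the $\Sigma^{(k)}$ in each $\Sigma^\circ(\alpha_{n+k})$ as connecting tubes, as illustrated in Figure~\ref{fig:filtration-limit-ordinal-shiftable}, produces the required embedding; properness is clear since each compact set in $S$ meets only finitely many of the pieces $\Sigma^\circ(\alpha_{n+k})$.
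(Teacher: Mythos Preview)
Your proof is correct and takes a somewhat different route from the paper's. You model $\Sigma(\lambda)$ as a one-dimensional strip with $\Sigma^\circ(\alpha_i)$ attached at each position $i$, whereas the paper uses a two-dimensional grid in which row $-j$ (for $j\geq 0$) is filled with copies of $\Sigma^\circ(\alpha_j)$; both yield exhaustions by subsurfaces $S_n \cong \Sigma^\circ(\alpha_n)$. Your model is the simpler of the two, and the paper's 2D grid mainly buys a more symmetric shiftability picture (one shifts $S_n$ horizontally within the grid), whereas you instead carve out a copy of $\Sigma^\circ(\alpha_n)$ inside each later piece $\Sigma^\circ(\alpha_{n+k})$ --- different but equally valid. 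Your containment argument is also more explicit than the paper's: the paper simply asserts in one line that $\Sigma$ ``must be bounded away from the non-isolated end at infinity,'' while you actually justify that each puncture of $\Sigma$ lands at an \emph{isolated} end of $S$ via the observation that an annular neighbourhood $A$ is a full connected component of $S \smallsetminus \partial A$. One small point worth tightening: the phrase ``combining this with properness \ldots\ we conclude $\Sigma \subset S_n$'' compresses a genuine step --- the cleanest way to finish is to note that if $\Sigma \not\subset S_n$ for all $n$, then (using connectedness and that $\bigcup_n S_n = S$) the subsurface $\Sigma$ meets $\partial S_n$ for all large $n$, producing a sequence in $\Sigma$ tending to $\omega^\lambda$ in $\overline{S}$; by properness this sequence also tends to some end of $\Sigma$, contradicting the isolatedness you established. (Also, your references to Figures~\ref{fig:filtration-limit-ordinal} and~\ref{fig:filtration-limit-ordinal-shiftable} point to the paper's 2D pictures rather than your 1D ones, but that is purely cosmetic.)
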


The second half of Theorem \ref{mainthm-genus-zero-infinite-punctures-2} -- the case when $\alpha = \lambda$ is a limit ordinal -- follows immediately:

\begin{cor}
\label{cor:omega-alpha-limit}
Suppose that $\lambda$ is a countable limit ordinal. Then the inclusion
\[
\iota_f(\Sigma(\lambda)) \colon \Map_f(\Sigma(\lambda)) \longhookrightarrow \Map(\Sigma(\lambda))
\]
induces the zero map on homology in positive degrees with any field coefficients.
\end{cor}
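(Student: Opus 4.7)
The plan is to deduce the corollary immediately from Proposition \ref{prop:omega-alpha-limit}, combined with Corollary \ref{coro:shiftable} and the fact that group homology commutes with filtered colimits. All of the real content lies in Proposition \ref{prop:omega-alpha-limit}; the corollary itself is essentially a formal consequence.

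First, I would recall from Lemma \ref{lem:colimits} that $\Map_f(\Sigma(\lambda))$ is the image in $\Map(\Sigma(\lambda))$ of the injective map $\mathrm{colim}_{\Sigma \in \fF(\Sigma(\lambda))} \Map(\Sigma) \to \Map(\Sigma(\lambda))$. Since group homology commutes with filtered colimits, it suffices to show that for every properly-embedded finite-type subsurface $\Sigma \subset \Sigma(\lambda)$, the extension-by-identity homomorphism induces the zero map
\[
H_i(\Map(\Sigma); K) \longrightarrow H_i(\Map(\Sigma(\lambda)); K)
\]
for every field $K$ and every $i \geq 1$.

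Given such a $\Sigma$, I would apply Proposition \ref{prop:omega-alpha-limit} to produce a properly-embedded subsurface $\Sigma' \subseteq \Sigma(\lambda)$ homeomorphic to $\Sigma^\circ(\alpha_n)$ for some $n$, containing $\Sigma$, and shiftable in $\Sigma(\lambda)$. Since $\Sigma$ is closed in $\Sigma(\lambda)$ it is closed in $\Sigma'$, so $\Sigma$ is properly embedded in $\Sigma'$ and the extension-by-identity maps factor as
\[
\Map(\Sigma) \longrightarrow \Map(\Sigma') \longrightarrow \Map(\Sigma(\lambda)).
\]
Corollary \ref{coro:shiftable}, applied to the shiftable subsurface $\Sigma' \subset \Sigma(\lambda)$, implies that the second arrow vanishes on $H_i(-;K)$ in every positive degree $i$ and for every field $K$. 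Hence the composition is zero on homology in positive degrees with field coefficients, and passing to the colimit over $\Sigma \in \fF(\Sigma(\lambda))$ yields the statement of the corollary.

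Since Proposition \ref{prop:omega-alpha-limit} is already assumed, there is no genuine obstacle remaining: the only step to verify is the factorisation through $\Map(\Sigma')$, which is immediate. The real work — producing, for each finite-type $\Sigma$, a shiftable subsurface $\Sigma^\circ(\alpha_n)$ that engulfs it and witnesses the cofinality of the sequence $(\alpha_n)$ in $\lambda$ — is packaged entirely into Proposition \ref{prop:omega-alpha-limit}, which is where I would expect the technical difficulty of the whole argument to be concentrated.
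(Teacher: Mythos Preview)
Your proposal is correct and follows essentially the same argument as the paper: factor $\Map(\Sigma)\to\Map(\Sigma(\lambda))$ through $\Map(\Sigma^\circ(\alpha_n))$ via Proposition~\ref{prop:omega-alpha-limit}, then apply Corollary~\ref{coro:shiftable} to the shiftable subsurface. The only difference is that you make explicit the (implicit in the paper) reduction to individual finite-type subsurfaces via the colimit description of $\Map_f$.
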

\begin{proof}
Let $\Sigma \subset \Sigma(\lambda)$ be a properly-embedded subsurface of finite type. Proposition \ref{prop:omega-alpha-limit} implies that the homomorphism $\Map(\Sigma) \to \Map(\Sigma(\lambda))$ factors as $\Map(\Sigma) \to \Map(\Sigma^\circ(\alpha_n)) \to \Map(\Sigma(\lambda))$, where the second homomorphism $\Map(\Sigma^\circ(\alpha_n)) \to \Map(\Sigma(\lambda))$ is induced by the inclusion of a shiftable subsurface; the result therefore follows from Corollary \ref{coro:shiftable}.
\end{proof}

\begin{proof}[Proof of Proposition \ref{prop:omega-alpha-limit}]
Let us construct a full-plane ``grid surface'' similarly to Definition \ref{def:grid-surface}, except that each square in the grid with coordinates $(i,j) \in \bZ \times \bZ$ is filled in with a copy of $\Sigma^\circ(\alpha_{\mathrm{max}(-j,0)})$. Equivalently, we begin with a copy of the half-plane grid surface $\Gr(\Sigma^\circ(\alpha_0))$, glue on a new row to the bottom of the grid filled with copies of $\Sigma^\circ(\alpha_1)$, then another row filled with copies of $\Sigma^\circ(\alpha_2)$, etc, until the whole grid is filled; see Figure \ref{fig:filtration-limit-ordinal}. Let us denote this surface by $S$. Also, for each $n \geq 1$, we denote by $S_n \subset S$ the subsurface given by the union of all $(2n-1)^2$ pieces whose coordinates $(i,j)$ satisfy $\mathrm{max}(\lvert i \rvert , \lvert j \rvert) \leq n-1$, together with the piece whose coordinates are $(0,-n)$; see Figure \ref{fig:filtration-limit-ordinal}. It follows from Lemma \ref{lem:ordinal-interval-calculations} that $S$ is homeomorphic to $\Sigma(\lambda)$ and each $S_n$ is homeomorphic to $\Sigma^\circ(\alpha_n)$. Also, the $S_n$ (for $n \in \bN$) form an exhaustive filtration of $S$ by properly-embedded subsurfaces.

Now let $\Sigma \subset \Sigma(\lambda) \cong S$ be any properly-embedded finite-type subsurface. It must be bounded away from the non-isolated end of $S$ ``at infinity'', so it must be contained in $S_n$ for some $n \in \bN$. To finish the proof, we just have to show that $S_n$ is shiftable in $S$: this is demonstrated pictorially in Figure \ref{fig:filtration-limit-ordinal-shiftable}.
\end{proof}

\section{The punctured and unpunctured Cantor tree surfaces}
\label{s:Cantor-tree-punctured}

In this section we prove Theorem \ref{mainthm-genus-zero-finite-punctures}\ref{mainthm-genus-zero-finite-punctures-1}, dealing with the case when $S$ has genus zero and has either $0$ or $1$ punctures (isolated planar ends). This corresponds to exactly two possible homeomorphism types of surfaces: the sphere minus a Cantor set $\bS^2 \smallsetminus \cC$ (the ``Cantor tree surface'') and the plane minus a Cantor set $\bR^2 \smallsetminus \cC$ (the ``punctured Cantor tree surface''). In both cases, we use the following result about the disc minus a Cantor set $\bD^2 \smallsetminus \cC$ (the ``one-holed Cantor tree surface'').

\begin{thm}[{\cite[Theorem~B]{PalmerWu2024}}]
\label{thm:PalmerWu2024}
$\Map(\bD^2 \smallsetminus \cC)$ is acyclic, i.e.~$H_i(\Map(\bD^2 \smallsetminus \cC)) = 0$ for all $i>0$.
\end{thm}

\begin{proof}[Proof of Theorem \ref{mainthm-genus-zero-finite-punctures}\ref{mainthm-genus-zero-finite-punctures-1}]
Let $S$ be an infinite-type surface of genus zero with either no punctures (isolated ends) or exactly one puncture; in other words $S$ is homeomorphic either to $\bS^2 \smallsetminus \cC$ or to $\bR^2 \smallsetminus \cC$. Let $\Sigma \subset S$ be a properly-embedded finite-type subsurface; our goal is to prove that $\Map(\Sigma) \to \Map(S)$ induces the zero map on homology in all positive degrees.

First assume that $\Sigma$ is compact. Since $S$ has genus zero, so does $\Sigma$, so it is homeomorphic to a sphere with $n$ holes for some $n\geq 1$. The complement $S \smallsetminus \Sigma$ thus has $n$ components, partitioning the end-space of $S$ into $n$ clopen subsets $E_1,\ldots,E_n$. Since the end-space of $S$ is homeomorphic either to $\cC$ or to $\cC \sqcup \{*\}$, and all non-empty clopen subsets of $\cC$ are homeomorphic to $\cC$ again, we may assume (reordering if necessary) that $E_1,\ldots,E_{n-1}$ are each homeomorphic to $\cC$ or $\varnothing$ and $E_n$ is homeomorphic to $\cC$ or $\varnothing$ or $\cC \sqcup \{*\}$ or $\{*\}$. Denote by $\Sigma'$ the subsurface of $S$ given by the union of $\Sigma$ together with the $n-1$ components of the complement $S \smallsetminus \Sigma$ corresponding to $E_1,\ldots,E_{n-1}$. Since $\Sigma'$ has genus zero, one (compact) boundary-component and has end-space homeomorphic to the disjoint union of some number (possibly zero) of copies of $\cC$, it is homeomorphic either to $\bD^2$ or to $\bD^2 \smallsetminus \cC$. The homomorphism $\Map(\Sigma) \to \Map(S)$ factors through $\Map(\Sigma')$, which is either the trivial group (if $\Sigma' \cong \bD^2$) or isomorphic to $\Map(\bD^2 \smallsetminus \cC)$, whose homology in all positive degrees vanishes by Theorem \ref{thm:PalmerWu2024}.

Now assume that $\Sigma$ is non-compact (but still finite-type). This implies that we must have $S \cong \bR^2 \smallsetminus \cC$ and $\Sigma \cong \Sigma_{0,n}^1$ for some $n\geq 1$. Removing an open annular neighbourhood of the unique puncture (point at infinity) of $S$, we obtain subsurfaces $S' \subset S$ and $\Sigma' \subset \Sigma$ such that $S' \cong \bD^2 \smallsetminus \cC$, $\Sigma' \cong \Sigma_{0,n+1}$ and $\Sigma' = \Sigma \cap S'$. Their mapping class groups fit into a map of central extensions
\begin{equation}
\label{eq:map-of-central-extensions}
\begin{tikzcd}
0 \ar[r] & \bZ \ar[rr] \ar[d,"\mathrm{id}"] && \Map(\Sigma') \ar[rr] \ar[d] && \Map(\Sigma) \ar[r] \ar[d] & 1 \\
0 \ar[r] & \bZ \ar[rr] && \Map(S') \ar[rr] && \Map(S) \ar[r] & 1.
\end{tikzcd}
\end{equation}
We claim that the central extension $0 \to \bZ \to \Map(\Sigma') \to \Map(\Sigma) \to 1$ on the top row is a trivial extension. Note that this will complete the proof, because it will then follow that the homomorphism $\Map(\Sigma) \to \Map(S)$ factors through $\Map(S') \cong \Map(\bD^2 \smallsetminus \cC)$, whose homology in all positive degrees vanishes by Theorem \ref{thm:PalmerWu2024}. The middle group is $\Map(\Sigma') \cong \Map(\Sigma_{0,n+1})$, which is the pure ribbon braid group on $n$ strands. It decomposes as $\bZ^n \times PB_n$, where $PB_n$ denotes the pure braid group on $n$ strands. By \cite[\S 9.3, p.~252]{FaMa11}, this decomposes as $PB_n \cong \cZ(PB_n) \times PB_n/\cZ(PB_n)$, where $\cZ(PB_n)$ denotes the centre of $PB_n$, which is infinite cyclic generated by the full twist $\Delta^2$. Putting this all together, we have a decomposition $\Map(\Sigma') \cong \bZ^n \times \bZ\{\Delta^2\} \times PB_n/\cZ(PB_n)$. The central subgroup $\bZ \subset \Map(\Sigma')$ under consideration is generated by the Dehn twist around the outer boundary, which corresponds under this identification to the element $((1,\ldots,1),\Delta^2) \in \bZ^n \times \bZ\{\Delta^2\}$. Via this description it is clear that it generates a direct factor of $\Map(\Sigma')$, in other words the quotient by this central subgroup admits a section: hence it is a trivial central extension.
\end{proof}

\begin{rem}
In contrast to our other vanishing results, Theorem \ref{mainthm-genus-zero-finite-punctures}\ref{mainthm-genus-zero-finite-punctures-1} holds for any coefficients, not only for coefficients in a field.
\end{rem}

\section{Non-trivial compactly-supported classes}
\label{s:non-trivial-classes}

In this section we prove all of our \emph{non-vanishing} results, except for Theorem \ref{mainthm-infinite-genus}\ref{mainthm-infinite-genus-3} whose proof is deferred to \S\ref{s:BT-construction}. In \S\ref{ss:finite-genus} we prove Theorem \ref{mainthm-finite-positive-genus}, dealing with the case when $S$ has non-zero but finite genus. In \S\ref{ss:finite-p-part-1} we prove Theorem \ref{mainthm-genus-zero-finite-punctures}\ref{mainthm-genus-zero-finite-punctures-2}, concerning Question \ref{q-finite-type} in the case when $S$ has finitely many but at least two punctures. In \S\ref{ss:topologically-distinguished-set-of-ends} we prove Theorem \ref{mainthm-genus-zero-infinite-punctures-1}, concerning the case when $S$ has genus zero and there is a finite, topologically-distinguished subset $A \subset \Ends(S)$ with $\lvert A \rvert \geq 4$. In the special case when $A$ is the set of punctures of $S$, this also proves Theorem \ref{mainthm-genus-zero-finite-punctures}\ref{mainthm-genus-zero-finite-punctures-3}.

\subsection{Finite, non-zero genus}
\label{ss:finite-genus}

In this subsection, we prove Theorem \ref{mainthm-finite-positive-genus}, which we state slightly more precisely as the following:

\begin{prop}
\label{prop:finite-genus}
Suppose that $1 \leq g_S < \infty$. Then the integral homology $H_*(\Map(S))$ contains non-zero classes that are supported on a compact subsurface of $S$ homeomorphic to $\Sigma_{g_S,1}$. More precisely, we may find such classes in degree $1$ when $g_S = 1$ and in degree $2$ when $g_S \geq 2$.
\end{prop}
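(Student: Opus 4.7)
The overall plan is to construct a symplectic representation $\sigma \colon \Map(S) \to \mathrm{Sp}(2g,\bZ)$, where $g = g_S$, such that the composition $\Map(\Sigma) \to \Map(S) \xrightarrow{\sigma} \mathrm{Sp}(2g,\bZ)$ is the standard symplectic representation of $\Map(\Sigma_{g,1})$, for $\Sigma \cong \Sigma_{g,1}$ a compact subsurface of $S$ containing all of the genus. Non-triviality of classes from $\Map(\Sigma)$ in $H_\ast(\Map(S))$ will then follow from well-known non-vanishing results for the homology of $\mathrm{Sp}(2g,\bZ)$.

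To construct $\sigma$, I would first note that $g_S = g < \infty$ implies every end of $S$ is planar, so by the classification of surfaces (Theorem~\ref{thm:clas-inf-sur}) the Freudenthal compactification $\widehat{S}$ of $S$ is homeomorphic to the closed genus-$g$ surface $\Sigma_g$, with $\widehat{S} \smallsetminus S \cong \Ends(S)$ a totally disconnected closed subspace. By functoriality of the Freudenthal compactification, each self-homeomorphism of $S$ extends uniquely to one of $\widehat{S}$; this passes to isotopy classes, yielding a homomorphism $\Map(S) \to \Map(\Sigma_g)$. Composing with the standard symplectic representation of the closed-surface mapping class group gives $\sigma$. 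Since $\Sigma$ contains all of the genus and $S \smallsetminus \Sigma$ has genus zero, the closure of $S \smallsetminus \Sigma$ inside $\widehat{S} = \Sigma_g$ is a disc; hence the composition $\Map(\Sigma) \to \Map(S) \to \Map(\Sigma_g)$ is the standard capping homomorphism $\Map(\Sigma_{g,1}) \to \Map(\Sigma_g)$, and $\sigma \circ \iota$ is the usual symplectic representation of $\Map(\Sigma_{g,1})$, where $\iota \colon \Map(\Sigma) \to \Map(S)$ is the inclusion of Lemma~\ref{lem:extend-by-id}.

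In the case $g_S = 1$, the capping map $\Map(\Sigma_{1,1}) \to \Map(\Sigma_1) = \mathrm{SL}_2(\bZ)$ is surjective, so the induced map $H_1(\Map(\Sigma);\bZ) \to H_1(\mathrm{SL}_2(\bZ);\bZ) \cong \bZ/12$ is surjective, and in particular non-zero; it follows that $\iota_\ast \colon H_1(\Map(\Sigma);\bZ) \to H_1(\Map(S);\bZ)$ is also non-zero. In the case $g_S \geq 2$, I would use Meyer's signature cocycle, which represents a non-zero class $\mu \in H^2(\mathrm{Sp}(2g,\bZ);\bZ)$; its pullback to $H^2(\Map(\Sigma_{g,1});\bZ)$ is the standard signature class, which is known to be non-zero. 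There exists therefore some $\alpha \in H_2(\Map(\Sigma);\bZ)$ pairing non-trivially with this pullback, and $\sigma_\ast(\iota_\ast\alpha)$ pairs non-trivially with $\mu$, so $\iota_\ast\alpha$ is non-zero in $H_2(\Map(S);\bZ)$.

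The main obstacle is the careful verification that $\widehat{S} \cong \Sigma_g$ when $g_S = g < \infty$, and that the extension of self-homeomorphisms of $S$ to $\widehat{S}$ respects isotopies, so as to yield a well-defined homomorphism $\Map(S) \to \Map(\Sigma_g)$. Once these technical points are established, everything else is a direct application of the standard theory of the symplectic representation of mapping class groups together with Meyer's signature cocycle.
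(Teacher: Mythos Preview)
Your overall strategy---factor through the closed surface $\Map(\Sigma_g)$ via Freudenthal compactification, then detect classes there---is exactly the route the paper takes, and your $g_S=1$ argument matches the paper's. For $g_S \geq 2$ the paper proceeds differently: it simply invokes Harer stability to see that the capping map $H_2(\Map(\Sigma_{g,1})) \to H_2(\Map(\Sigma_g))$ is surjective for $g \geq \tfrac{3}{2}$, and then appeals to the known non-vanishing of $H_2(\Map(\Sigma_g);\bZ)$ (namely $\bZ/2$, $\bZ\oplus\bZ/2$, $\bZ$ for $g=2,3,\geq 4$).

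Your signature-cocycle detection is a reasonable alternative for $g \geq 3$, but it has a genuine gap at $g=2$. By your own construction, the composition $\Map(\Sigma_{2,1}) \to \Map(S) \xrightarrow{\sigma} \mathrm{Sp}(4,\bZ)$ factors through $\Map(\Sigma_2)$. Since $H_2(\Map(\Sigma_2);\bZ)\cong\bZ/2$ is pure torsion, the image of any $\alpha\in H_2(\Map(\Sigma_{2,1});\bZ)$ in $H_2(\mathrm{Sp}(4,\bZ);\bZ)$ is torsion, and the evaluation pairing of the integral class $\mu$ against a torsion class is zero. So the step ``there exists $\alpha$ pairing non-trivially with the pullback of $\mu$'' fails for $g=2$. (Even for $g\geq 3$ your ``therefore'' tacitly uses that $H_1(\Map(\Sigma_{g,1}))=0$, so that $H^2\cong\mathrm{Hom}(H_2,\bZ)$; this should be made explicit.) The paper's approach sidesteps this by detecting the torsion class in $H_2(\Map(\Sigma_2))$ directly rather than via an integral cohomology pairing; you could patch your argument for $g=2$ the same way, abandoning the symplectic detour in that case.
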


In the proof, we will need the following calculations of low-degree homology groups of mapping class groups of closed, orientable surfaces.

\begin{lem}
\label{lem:low-degree-calculations}
We have $H_1(\Map(\Sigma_1)) \cong \bZ/12$ and
\[
H_2(\Map(\Sigma_g)) \cong \begin{cases}
\bZ/2 & g=2 \\
\bZ \oplus \bZ/2 & g=3 \\
\bZ & g\geq 4.
\end{cases}
\]
\end{lem}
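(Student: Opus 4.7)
The plan is to treat each genus separately, since the lemma is an assembly of known low-degree computations of the homology of mapping class groups of closed surfaces; the proof will consist largely of identifying the correct reference in each case.

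For $g=1$, I would use the classical isomorphism $\Map(\Sigma_1) \cong SL_2(\bZ)$ arising from the symplectic action on $H_1(\Sigma_1;\bZ) \cong \bZ^2$ (Dehn--Nielsen--Baer). The abelianization is then a direct calculation from the standard presentation $SL_2(\bZ) \cong \bZ/4 \ast_{\bZ/2} \bZ/6$: the amalgamated product presentation forces the generators of the two cyclic factors to have the same image in the abelianization, yielding $\bZ/12$ as the least common multiple of the orders modulo the shared relation.

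For $g\geq 2$ I would split according to whether we are in the stable range. When $g \geq 4$, $H_2(\Map(\Sigma_g);\bZ) \cong \bZ$ (generated by the first Miller--Morita--Mumford class $\kappa_1$) is due to Harer's stable homology theorem, improved to its optimal integral range by Ivanov and by Randal-Williams, and it is also recoverable from the integral refinement of the Madsen--Weiss theorem. When $g = 3$, the calculation $H_2(\Map(\Sigma_3);\bZ) \cong \bZ \oplus \bZ/2$ is due to Korkmaz--Stipsicz. When $g = 2$, one combines the Birman--Hilden correspondence (which identifies $\Map(\Sigma_2)$ as a central $\bZ/2$-extension of a spherical braid-type group) with the known low-degree homology of that quotient to obtain $H_2(\Map(\Sigma_2);\bZ) \cong \bZ/2$; this was carried out explicitly by Benson--Cohen and by Sakasai.

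The main obstacle is nothing conceptual but rather bibliographic: one must quote the correct small-genus computation and ensure conventions match (closed surface versus surface with boundary or punctures, full mapping class group versus Torelli or level-$2$ subgroup), since different sources compute subtly different invariants. Once the references are pinned down, the statement of the lemma is just the collation of these four facts.
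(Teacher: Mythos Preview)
Your approach is essentially the same as the paper's: both treat the lemma as a compilation of known low-degree calculations and proceed genus-by-genus via citation, with the $g=1$ case handled identically through $\Map(\Sigma_1)\cong SL_2(\bZ)$. The only discrepancy is bibliographic, which you yourself flag as the real obstacle: the paper attributes the cases $g=2$ and $g\geq 4$ to Korkmaz's survey and the $g=3$ case to Sakasai (noting explicitly that Korkmaz leaves $g=3$ ambiguous), whereas you attribute $g=3$ to Korkmaz--Stipsicz --- but their work only narrowed the answer to $\bZ$ or $\bZ\oplus\bZ/2$, and the extra $\bZ/2$ was confirmed by Sakasai.
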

\begin{proof}
For the first statement, $\Map(\Sigma_1)$ is isomorphic to $SL_2(\bZ)$, whose abelianisation is $\bZ/12$. For the second statement, see \cite[Theorem 6.1 and the paragraph following it]{Korkmaz02} for $g\geq 4$ and $g=2$. The case $g=3$ is not unambiguously settled in \cite{Korkmaz02}; instead, see \cite[Corollary 4.10]{Sakasai2012}. See also \cite[Lemma~A.1]{BCRR2020} for these and many more related calculations.
\end{proof}

\begin{proof}[Proof of Proposition \ref{prop:finite-genus}]
Since $S$ has finite genus $g_S$, all of its ends are planar and it is homeomorphic to $\Sigma_{g_S} \smallsetminus E$, where $E$ is the image of an embedding $\Ends(S) \hookrightarrow \Sigma_{g_S}$ of the end-space of $S$. Choose an embedded disc $D \subset \Sigma_{g_S}$ containing $E$ in its interior. There are homomorphisms
\begin{equation}
\label{eq:two-extensions}
\Map(\Sigma_{g_S} \smallsetminus \mathring{D}) \longrightarrow \Map(\Sigma_{g_S} \smallsetminus E) \longrightarrow \Map(\Sigma_{g_S})
\end{equation}
given respectively by extending homeomorphisms of $\Sigma_{g_S} \smallsetminus \mathring{D}$ (that are the identity on $\partial D$) by the identity on $D \smallsetminus E$ and extending homeomorphisms of $\Sigma_{g_S} \smallsetminus E$ to (its Freudenthal compactification) $\Sigma_{g_S}$ in the unique possible way (see~\cite[Appendix~B]{PalmerWu2} for why this determines a well-defined homomorphism of mapping class groups). The composition of the two homomorphisms \eqref{eq:two-extensions} is the classical \emph{capping map} $\Map(\Sigma_{g_S} \smallsetminus \mathring{D}) \to \Map(\Sigma_{g_S})$ that extends homeomorphisms by the identity on $D$. This map induces a surjection on $H_i$ whenever $g_S \geq \frac{3}{2}(i-1)$ by part \ref{thm:hmstb-capping} of Theorem \ref{thm:hmstb}. In particular, it induces a surjection on $H_1$ whenever $g_S \geq 1$ and on $H_2$ whenever $g_S \geq 2$. It therefore suffices to check that $H_1(\Map(\Sigma_1)) \neq 0$ and that $H_2(\Map(\Sigma_{g_S})) \neq 0$ when $g_S \geq 2$. This follows from Lemma \ref{lem:low-degree-calculations}.
\end{proof}

\subsection{Finitely many punctures but at least two}
\label{ss:finite-p-part-1}

In this subsection, we prove Theorem \ref{mainthm-genus-zero-finite-punctures}\ref{mainthm-genus-zero-finite-punctures-2}. In fact, this part of Theorem \ref{mainthm-genus-zero-finite-punctures} does not require the assumption that $g_S = 0$, so we may strengthen it to:

\begin{prop}
\label{prop:finite-p-part-1}
Suppose that $2 \leq p_S < \infty$. Then $H_*(\Map(S))$ contains a non-trivial class that is supported on a properly-embedded finite-type subsurface of $S$.
\end{prop}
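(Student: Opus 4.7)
The plan is to detect a non-trivial class in $H_1(\Map(S);\bZ)$ via the action of $\Map(S)$ on the finite set of punctures. Since $\cP(S)$ is finite of cardinality $p_S$, any self-homeomorphism of $S$ induces a permutation of $\cP(S)$, giving a natural homomorphism $\pi \colon \Map(S) \to \Bij(\cP(S)) = S_{p_S}$. Combined with the classical identification $H_1(S_{p_S};\bZ) \cong S_{p_S}^{\mathrm{ab}} = \bZ/2$ for $p_S \geq 2$, this yields a ``detector'' $\pi_* \colon H_1(\Map(S);\bZ) \to \bZ/2$ which we shall show is hit by a class coming from a finite-type subsurface.

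Next, I would construct an explicit properly-embedded finite-type subsurface $\Sigma \subset S$ whose mapping class group already surjects onto $S_{p_S}$. Because a puncture is, by Definition~\ref{defn:p}, an isolated point of $\Ends(S)$ that is not a limit of other ends, each puncture $p_i \in \cP(S)$ admits an open neighbourhood in $S$ whose closure is homeomorphic to a once-punctured closed disk; such neighbourhoods may be chosen pairwise disjoint. Joining them by thin tubular neighbourhoods of pairwise-disjoint embedded arcs produces a properly-embedded subsurface $\Sigma \subset S$ with $\Sigma \cong \Sigma_{0,1}^{p_S}$ whose $p_S$ punctures coincide with the punctures of $S$. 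Its mapping class group is the braid group $B_{p_S}$, whose quotient by the commutator subgroup maps onto $S_{p_S}$ via the standard surjection $B_{p_S} \twoheadrightarrow S_{p_S}$.

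To conclude, I would observe that the extension-by-identity homomorphism $\iota \colon \Map(\Sigma) \to \Map(S)$ is by construction compatible with the permutation actions on punctures, so $\pi \circ \iota \colon \Map(\Sigma) \to S_{p_S}$ is precisely the standard surjection $B_{p_S} \twoheadrightarrow S_{p_S}$. Passing to $H_1(-;\bZ)$, the composite $(\pi \circ \iota)_* \colon H_1(\Map(\Sigma);\bZ) \to H_1(\Map(S);\bZ) \to \bZ/2$ is surjective. Choosing any $\alpha \in H_1(\Map(\Sigma);\bZ)$ with $(\pi\circ\iota)_*(\alpha) \neq 0$, the class $\iota_*(\alpha) \in H_1(\Map(S);\bZ)$ is non-zero (it has non-zero image under $\pi_*$) and is by construction supported on the finite-type subsurface $\Sigma$.

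There is no serious obstacle: the only point requiring care is the existence of the properly-embedded $\Sigma \cong \Sigma_{0,1}^{p_S}$ realising the punctures of $S$ as its own punctures, and this uses nothing beyond the defining property of punctures as isolated, non-accumulated ends. Note also that the argument works with integral (not merely field) coefficients, as required by the statement.
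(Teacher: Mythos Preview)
Your argument is correct and essentially identical to the paper's: both construct a properly-embedded punctured disc $\Sigma \cong \Sigma_{0,1}^{p_S}$ containing all the punctures, use the factorisation $B_{p_S} = \Map(\Sigma) \to \Map(S) \to \mathfrak{S}_{p_S}$ (which is the standard surjection), and detect a non-trivial $H_1$-class via $H_1(\mathfrak{S}_{p_S};\bZ) \cong \bZ/2$. One minor phrasing slip: the clause ``whose quotient by the commutator subgroup maps onto $S_{p_S}$'' is misleading since $B_{p_S}^{\mathrm{ab}} \cong \bZ$ does not surject onto $S_{p_S}$ --- but you correctly use the surjection $B_{p_S} \twoheadrightarrow S_{p_S}$ itself (and then pass to $H_1$) in the next paragraph, so this does not affect the argument.
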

\begin{proof}
Since $p = p_S$ is finite, there is a properly-embedded subsurface of $S$ homeomorphic to the punctured disc $\bD^2 \smallsetminus P$, where $P$ is a finite set of size $p$ in the interior of $\bD^2$. This induces an extension map $B_p = \Map(\bD^2 \smallsetminus P) \to \Map(S)$, where $B_p$ denotes the braid group on $p$ strands. For any homeomorphism of $S$, its induced action on the end-space $\Ends(S)$ must send the subset of punctures onto itself, so there is a well-defined map $\Map(S) \to \mathfrak{S}_p$ recording this permutation. The composition $B_p \to \mathfrak{S}_p$ records the permutation induced by a braid, and is surjective. Since abelianisation $(-)^{ab} = H_1(-)$ is a right-exact functor, the composition of the induced maps $H_1(B_p) \to H_1(\Map(S)) \to H_1(\mathfrak{S}_p)$ is also surjective. Since $H_1(\mathfrak{S}_p) \cong \bZ/2$ (here we are using the assumption that $p\geq 2$), we may choose a lift $\alpha \in H_1(B_p)$ of the non-trivial element of $H_1(\mathfrak{S}_p)$. The image of $\alpha$ in $H_1(\Map(S))$ is then a non-trivial class supported on a properly-embedded finite-type subsurface.
\end{proof}

The above proof does not work when $p = p_S = 1$, since $H_1(\mathfrak{S}_p)$ is trivial in this case. Indeed, in this case, the answer to Question \ref{q-finite-type} depends also on the genus of $S$. If $g_S = 0$ then $S$ must be homeomorphic to $\bR^2 \smallsetminus \cC$ and the answer is given in \S\ref{s:Cantor-tree-punctured} above. The case when $0 < g_S < \infty$ is covered by Proposition \ref{prop:finite-genus} above. The case when $g_S = \infty$ is dealt with in \S\ref{s:BT-construction} below.

\subsection{Genus zero with a finite, topologically-distinguished set of ends}
\label{ss:topologically-distinguished-set-of-ends}

We next prove Theorem~\ref{mainthm-genus-zero-infinite-punctures-1} (which in particular implies Theorem \ref{mainthm-genus-zero-finite-punctures}\ref{mainthm-genus-zero-finite-punctures-3}). Recall from Definition \ref{def:topologically-distinguished-subset} the notion of a \emph{topologically distinguished subset}. The following is a refinement of Theorem~\ref{mainthm-genus-zero-infinite-punctures-1}.

\begin{prop}
\label{prop:at-least-3-distinguished-points}
Suppose that $S$ has genus zero and that $\Ends(S)$ has a finite, topologically distinguished subset of size $n\geq 2$. Set $k=n-1$ if $n$ is even and $k = \frac12(n-1)$ if $n$ is odd. Then there is a compact subsurface $\Sigma \subset S$ and a commutative diagram
\begin{equation}
\label{eq:at-least-3-distinguished-points}
\begin{tikzcd}
H_1(\Map(\Sigma)) \ar[rr,two heads] \ar[d] && \bZ/k \ar[d,"\cdot 2"] \\
H_1(\Map(S)) \ar[rr] && \bZ/2k,
\end{tikzcd}
\end{equation}
where the left-hand vertical map is induced by $\Map(\Sigma) \subset \Map(S)$, the right-hand vertical map is multiplication by $2$ and the top horizontal map is surjective. In particular, if $n\geq 4$, there are non-trivial classes in $H_1(\Map(S))$ that are supported on the compact subsurface $\Sigma \subset S$.
\end{prop}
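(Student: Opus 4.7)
The plan is to identify the target $\bZ/2k$ with $H_1(\Map(\Sigma_0^n))$, the abelianisation of the mapping class group of the $n$-punctured sphere (with permutations of the punctures allowed), and then to construct a homomorphism $\chi \colon \Map(S) \to \bZ/2k$ whose restriction, via the inclusion of a suitable compact subsurface $\Sigma \subset S$, lands inside $\bZ/k \subset \bZ/2k$ and surjects onto it. For the identification of the target: for $n \geq 3$ one has $\Map(\Sigma_0^n) \cong B_n(\bS^2)/\langle \Delta^2 \rangle$, with $H_1(B_n(\bS^2)) \cong \bZ/2(n-1)$ obtained by abelianising the standard presentation of the spherical braid group; the image of the full twist $\Delta^2 = (\sigma_1 \cdots \sigma_{n-1})^n$ in $H_1$ is $n(n-1) \bmod 2(n-1)$, which is $0$ for $n$ even and $n-1$ for $n$ odd, so the quotient is $\bZ/2k$ in both cases. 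The induced sign map $\bZ/2k \twoheadrightarrow H_1(\mathfrak{S}_n) = \bZ/2$ then has kernel $\bZ/k$, embedded as multiplication by $2$.

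To construct the compact subsurface, I would exploit the fact that $A$ is topologically distinguished to pick pairwise disjoint open neighbourhoods $U_1,\ldots,U_n$ of $e_1,\ldots,e_n$ in the Freudenthal compactification $\overline{S}$ with $U_i \cap A = \{e_i\}$, together with one further open $U_0$ disjoint from the $U_i$ and covering all remaining ends of $S$; then $\Sigma := \overline{S} \setminus \bigcup_{i=0}^n U_i$ is compact and homeomorphic to $\Sigma_{0,n+1}$, with $n$ inner boundary circles separating individual distinguished ends and one outer boundary circle separating the rest. For the character, the plan is to build a natural map $\Psi \colon \Map(S) \to \Map(\Sigma_0^n)$ as follows: any $\phi \in \Homeo(S)$ extends to $\overline{S}$ preserving $A$ set-wise, and using the topological distinction of $A$ to identify small neighbourhoods of the $e_i$ coherently with neighbourhoods of $n$ marked points on $\bS^2$, this should descend to an isotopy class of self-homeomorphism of $(\bS^2,A)$ by ``collapsing'' the other ends. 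Composing $\Psi$ with abelianisation gives $\chi$. The square then commutes because homeomorphisms supported inside $\Sigma$ fix $A$ pointwise, so their image under $\chi$ has trivial sign and lies in $\bZ/k$; and surjectivity of the top arrow is verified by computing the image under $\chi$ of the Dehn twist around a curve in $\Sigma$ bounding two of the $e_i$ (a lantern-type curve when $n \geq 4$), which corresponds in $\Map(\Sigma_0^n)$ to a product of two half-twists and hence maps to $2 \in \bZ/2k$, a generator of $\bZ/k$.

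The main obstacle will be the rigorous construction of the natural map $\Psi \colon \Map(S) \to \Map(\Sigma_0^n)$, since when $S$ has many ends beyond $A$ there is no canonical quotient map $S \twoheadrightarrow \bS^2$ to work with. I would try to bypass this by working instead with the five-term exact sequence associated to the short exact sequence $1 \to \Map(S)_A \to \Map(S) \to \mathfrak{S}_n \to 1$, where $\Map(S)_A$ is the subgroup fixing $A$ pointwise: the character $\chi$ corresponds to an $\mathfrak{S}_n$-equivariant lift of a character $\Map(S)_A \to \bZ/k$, and the existence of such a lift is controlled by an obstruction class in $H^2(\mathfrak{S}_n;\bZ/k)$ that should vanish thanks to the available local models near each $e_i$. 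As a concrete fallback, one may instead specify $\chi$ by its values on a natural generating set of $\Map(S)$---half-twists between adjacent distinguished ends together with boundary Dehn twists in $\Sigma$---and verify the relations directly.
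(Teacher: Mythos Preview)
Your approach is essentially the same as the paper's: identify $\bZ/2k$ with the abelianisation of $\Map(\bS^2 \smallsetminus A)$ via the spherical braid group, build a forgetful map $\Map(S)\to\Map(\bS^2\smallsetminus A)$, and check that the image of the compact subsurface lands in the index-$2$ subgroup $\bZ/k$ generated by $2$.

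There are two simplifications you are missing. First, the map $\Psi$ is not an obstacle at all. Since $S$ has genus zero, its Freudenthal compactification is literally $\bS^2$; every self-homeomorphism of $S$ extends uniquely to $\bS^2$, the extension preserves $A$ setwise because $A$ is topologically distinguished, and hence it restricts to a self-homeomorphism of $\bS^2\smallsetminus A$. No collapsing, no five-term sequences, no generators-and-relations are needed --- this gives the homomorphism $\Map(S)\to\Map(\bS^2\smallsetminus A)$ directly. Second, for the same reason there is no need for your extra boundary component $U_0$: partition \emph{all} of $\Ends(S)$ into $n$ clopen pieces $E_1,\ldots,E_n$ with $e_i\in E_i$ (possible since $\Ends(S)$ is zero-dimensional and Hausdorff), and take $\Sigma\cong\Sigma_{0,n}$ to be the complement of the corresponding $n$ discs. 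The surjectivity onto $\bZ/k$ is then witnessed by the point-push of one boundary circle around another, which maps to $\sigma_i^2\mapsto 2\in\bZ/2k$, exactly as you suggested.
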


\begin{rem}
We do not require the finite, topologically-distinguished subset of $\Ends(S)$ in Proposition \ref{prop:at-least-3-distinguished-points} to be homogeneous; for example, it may consist of $n$ points that are each (individually) topologically-distinguished. The bottom horizontal map of \eqref{eq:at-least-3-distinguished-points} is surjective if and only if this is \emph{not} the case, i.e.~two of the points of the (chosen) topologically-distinguished subset are \emph{similar}, i.e.~have homeomorphic open neighbourhoods.
\end{rem}

\begin{proof}[Proof of Proposition \ref{prop:at-least-3-distinguished-points}]
We first note that the second statement follows from the first: when $n\geq 4$ we have $k\geq 2$, so the element $2 \in \bZ/2k$ is non-trivial and pulls back through $H_1(\Map(S))$ to $H_1(\Map(\Sigma))$. Hence we just have to prove the first statement.

Denote by $A \subset \Ends(S)$ a topologically distinguished subset of size $n$. Since $\Ends(S)$ is Hausdorff and zero-dimensional, we may partition it into clopen subsets $E_1,\ldots,E_n$ such that each $E_i$ contains exactly one point of $A$. Let $S_i = \bD^2 \smallsetminus E_i$ for $i\in\{1,\ldots,n\}$ and denote by $\Sigma_{0,n}$ the compact, connected, genus-$0$ surface with $n$ boundary components. Gluing $S_1,\ldots,S_n$ into the $n$ holes of $\Sigma_{0,n}$ we obtain $\bS^2 \smallsetminus \Ends(S)$, which is homeomorphic to $S$. There is therefore an extension homomorphism $\Map(\Sigma_{0,n}) \to \Map(S)$ given by extending homeomorphisms by the identity on each $S_i$. On the other hand, since $A \subset \Ends(S)$ is a topologically distinguished subset, we have a homomorphism $\Map(S) \to \Map(\bS^2 \smallsetminus A)$ given by filling in all ends of $S$ except $A$. Next, there is a central extension \cite[\S 9.1.4]{FaMa11}
\[
1 \to \bZ/2 \longrightarrow B_n(\bS^2) \longrightarrow \Map(\bS^2 \smallsetminus A) \to 1,
\]
where the generator of the kernel is sent to a full twist in the spherical braid group $B_n(\bS^2)$. This is sent to $n(n-1)$ in $B_n(\bS^2)^{ab} \cong \bZ/(2n-2)$, which is $0$ if $n$ is even and $n-1$ if $n$ is odd. Let us consider the quotient of $B_n(\bS^2)$ onto its abelianisation $\bZ/(2n-2)$ when $n$ is even and the further quotient onto $\bZ/(n-1)$ when $n$ is odd; we may write this uniformly as the quotient $B_n(\bS^2) \twoheadrightarrow \bZ/2k$ where $k=n-1$ if $n$ is even and $k = \frac12(n-1)$ if $n$ is odd. By construction, the kernel $\bZ/2 \subset B_n(\bS^2)$ of the central extension above is sent to zero in this quotient, so it factors through a quotient $\Map(\bS^2 \smallsetminus A) \twoheadrightarrow \bZ/2k$. Putting everything together, we have maps
\begin{equation}
\label{eq:composition-onto-Zk}
\Map(\Sigma_{0,n}) \longrightarrow \Map(S) \longrightarrow \Map(\bS^2 \smallsetminus A) \longrightarrow \bZ/2k.
\end{equation}
The composition $\Map(\Sigma_{0,n}) \to \bZ/2k$ is not surjective: its image is instead the cyclic subgroup of order $k$ generated by $2 \in \bZ/2k$. To see this, first note that a pre-image of the element $2 \in \bZ/2k$ in $\Map(\Sigma_{0,n})$ is given by a homeomorphism that ``pushes'' one boundary component in a full loop around another boundary component. This implies that the image contains the cyclic subgroup generated by $2$. On the other hand, it cannot be larger than this, since each element of $\Map(\Sigma_{0,n})$ fixes the $n$ boundary components pointwise and hence its induced permutation of $A$ is trivial, which is an \emph{even} permutation. Thus we have the following commutative diagram:
\[
\begin{tikzcd}
\Map(\Sigma_{0,n}) \ar[rr,two heads] \ar[d] && \bZ/k \ar[d,"\cdot 2"] \\
\Map(S) \ar[rr] && \bZ/2k.
\end{tikzcd}
\]
Taking abelianisations, we obtain the desired commutative diagram \eqref{eq:at-least-3-distinguished-points} with $\Sigma = \Sigma_{0,n}$.
\end{proof}

\section{Classes detected by wreath products of the circle group}
\label{s:BT-construction}

In this section we prove Theorem \ref{mainthm-infinite-genus}\ref{mainthm-infinite-genus-3}, which we restate in a stronger form as Proposition \ref{prop:finite-p-part-2} below. We want to consider surfaces of infinite genus with finitely many (and at least one) punctures. However, it will be more convenient to think of the punctures as marked points, so we fix a surface $S$ of infinite genus and \emph{no} punctures, together with a non-empty, finite subset $P \subset S$, and we shall be interested in Question \ref{q-finite-type-pure} for the surface $S \smallsetminus P$. In other words, we are interested in the image of the map\begin{equation}
\label{eq:inclusion-f}
H_*(\Map_c(S \smallsetminus P)) \longrightarrow H_*(\Map(S \smallsetminus P))
\end{equation}
induced by the inclusion $\Map_c(S \smallsetminus P) \subset \Map(S \smallsetminus P)$. Question \ref{q-finite-type-pure} asks whether the image of the map \eqref{eq:inclusion-f} is non-zero for some positive degree $* > 0$. In fact we may prove that it is non-zero in \emph{every even degree}:

\begin{prop}
\label{prop:finite-p-part-2}
Let $S$ be a connected, orientable surface of infinite genus with no punctures and $P \subset S$ a non-empty, finite subset. Then the image of the map \eqref{eq:inclusion-f} contains a $\bZ$ summand in every even degree; in particular it is non-zero.
\end{prop}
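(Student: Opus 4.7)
For each $i \geq 1$ I aim to exhibit an integral class $\alpha_i \in H_{2i}(\Map_c(S \smallsetminus P); \bZ)$ whose image in $H_{2i}(\Map(S \smallsetminus P); \bZ)$ is primitive, hence generates a $\bZ$ direct summand. The detection will use the tautological $\psi$-classes at the punctures, packaged via symmetric polynomials so as to descend from $\PMap(S \smallsetminus P)$ to the full $\Map(S \smallsetminus P)$.

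\textbf{Detection classes.} Let $q = |P|$ and enumerate $P = \{x_1, \ldots, x_q\}$. Over $B\PMap(S \smallsetminus P)$ each ordered puncture $x_j$ carries a canonical complex line bundle (the vertical tangent line at the section through $x_j$), giving $\psi_j = c_1 \in H^2(B\PMap(S \smallsetminus P); \bZ)$. The $i$-th power sum $s_i(\psi) := \psi_1^i + \cdots + \psi_q^i$ is $\mathfrak{S}_q$-invariant and descends to a class in $H^{2i}(B\Map(S \smallsetminus P); \bZ)$. Conceptually, it is pulled back from a natural map
\[
\Psi \colon B\Map(S \smallsetminus P) \longrightarrow B(S^1 \wr \mathfrak{S}_q) \;\simeq\; (BS^1)^q_{h\mathfrak{S}_q},
\]
classifying the principal $S^1 \wr \mathfrak{S}_q$-bundle assembled from the (permuted) normal circle bundles around the $q$ puncture-sections in the universal surface bundle; the symmetric polynomials in the Chern classes of $(BS^1)^q$ are then the descended integral classes -- this is the wreath product of circle groups alluded to in the section title.

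\textbf{Source classes.} Fix $x_0 \in P$ and a compact subsurface $\Sigma \subset S$ of genus $h \geq 1$ containing $x_0$ in its interior and disjoint from $P \smallsetminus \{x_0\}$, with $\Sigma \cong \Sigma_{h, 1}$. Rotation around $x_0$ in a small disc, extended by the identity, defines a group homomorphism $\rho \colon SO(2) \to \Diff^+(\Sigma, x_0)$, and the induced map $B\rho \colon BSO(2) = \bC P^\infty \to B\Map(\Sigma, x_0)$ satisfies $(B\rho)^*(\psi_{x_0}) = c_1$, a generator of $H^2(\bC P^\infty; \bZ)$. Restricting to $\bC P^i \hookrightarrow \bC P^\infty$ yields $f \colon \bC P^i \to B\Map(\Sigma, x_0)$ with $f^*(\psi_{x_0}) = c_1(\mathcal{O}(1))$. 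Extension by the identity (using that $\pi_0 \Diff^+(\bD^2, 0) = 0$ to find a representative that is the identity near $x_0$) gives a homomorphism $\Map(\Sigma, x_0) \to \Map_c(S \smallsetminus P)$; I define $\alpha_i$ to be the image of $f_*[\bC P^i]$ under this homomorphism.

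\textbf{Verification and main obstacle.} Since the image of $f$ fixes each $x_j$ ($j \neq 0$) pointwise with constant tangent direction, the normal bundles of the sections through these $x_j$ are trivial, so $f^*(\psi_{x_j}) = 0$. Naturality then gives
\[
\langle s_i(\psi),\, \alpha_i \rangle \;=\; \langle f^*(\psi_{x_0}^i),\, [\bC P^i] \rangle \;=\; \langle c_1(\mathcal{O}(1))^i,\, [\bC P^i] \rangle \;=\; 1,
\]
and this pairing is preserved by the inclusion $\Map_c(S \smallsetminus P) \subset \Map(S \smallsetminus P)$ since $s_i(\psi)$ is defined on the larger group. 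Hence the image of $\alpha_i$ in $H_{2i}(\Map(S \smallsetminus P); \bZ)$ is a primitive class and generates a $\bZ$ direct summand. The main technical obstacle is the clean construction of the map $\Psi$ -- equivalently, showing that the symmetric polynomials $s_i(\psi)$ are honest integral cohomology classes on $B\Map(S \smallsetminus P)$ that agree, upon restriction to $B\Map(\Sigma, x_0)$, with $\psi_{x_0}^i$; this is best handled using a moduli-theoretic or Madsen--Tillmann--Weiss model in which the vertical tangent lines at the puncture-sections are genuine complex line bundles over the classifying space.
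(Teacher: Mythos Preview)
Your overall strategy---detect via the map $\Psi \colon B\Map(S \smallsetminus P) \to B(\bS^1 \wr \mathfrak{S}_q)$ and produce source classes coming from a copy of $B\bS^1$---is exactly the paper's approach. However, the construction of the source classes contains a genuine error.

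The rotation map $\rho \colon SO(2) \to \Diff^+(\Sigma, x_0)$ does not exist. If you cut off the rotation with a radial function $\chi$, then $\rho(\theta)(r,\phi) = (r,\phi + \theta\chi(r))$ is indeed a homomorphism from $\bR$, but $\rho(2\pi)$ is the Dehn twist around a curve encircling $x_0$, not the identity; so $\rho$ does not descend to $SO(2)$. More decisively: any continuous homomorphism $SO(2) \to \Diff(\Sigma, x_0, \partial\Sigma)$ has image in the identity component, which is contractible (Earle--Schatz plus the Birman fibration), so composing with the derivative map $\tau$ yields a null-homotopic map $SO(2) \to SO(2)$, never the identity. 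Equivalently, $B\Map(\Sigma, x_0)$ is a $K(\pi,1)$, so every map from the simply connected space $\bC P^i$ into it is null-homotopic, and hence $\alpha_i := f_*[\bC P^i] = 0$ for all $i \geq 1$. Your verification of the pairing therefore cannot be correct.

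What is actually needed is a \emph{homology} section of $H_*(\Map_c(S \smallsetminus P)) \to H_*(B(\bS^1)^q)$, which does not arise from a group-level section. This is precisely the content of the B\"odigheimer--Tillmann splitting $B\Gamma_\infty^q{}^+ \simeq B\Gamma_\infty^+ \times B(\bS^1 \wr \mathfrak{S}_q)$, valid only after passing to infinite genus and applying the plus-construction; it is a non-trivial theorem relying on the infinite loop space structure of $B\Gamma_\infty^+$. The paper invokes this as a black box (Theorem~\ref{thm:BT}) and then extends $\Psi$ to the full mapping class group using Yagasaki's result that $\Homeo(S \smallsetminus P)$ has contractible path-components (your ``main obstacle'', which your Madsen--Tillmann--Weiss suggestion would not directly address, as MTW concerns finite-type surfaces).
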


A key ingredient of the proof is a construction due to Bödigheimer and Tillmann \cite{BoedigheimerTillmann2001}.

\begin{notation}
For a surface $S$ (possibly with boundary) and finite subset $P \subset \mathring{S}$ of its interior, denote by $\Diff(S,P)$ the topological group of diffeomorphisms of $S$ that fix $P$ setwise and $\partial S$ pointwise, equipped with the weak (``smooth compact-open'') topology \cite[\S 2.1]{Hirsch1976}.
\end{notation}

\begin{defn}[{\cite[\S 3]{BoedigheimerTillmann2001}}]
\label{def:map-to-wreath-product}
For a surface $S$ (possibly with boundary) and finite subset $P \subset \mathring{S}$ of size $p = \lvert P \rvert$, let
\begin{equation}
\label{eq:map-to-wreath-product}
\tau \colon \Diff(S,P) \longrightarrow \bS^1 \wr \mathfrak{S}_p = (\bS^1)^p \rtimes \mathfrak{S}_p
\end{equation}
be the continuous homomorphism defined as follows. Choose a bijection $i \mapsto x_i \colon \{1,\ldots,p\} \to P$ and choose a tangent vector $v_i \in T_{x_i}S$ for each $i \in \{1,\ldots,p\}$. For a diffeomorphism $\varphi$, the map $\tau$ records the induced permutation $\sigma(\varphi)$ of $\{1,\ldots,p\}$ under the chosen bijection and the angle $\theta_i(\varphi)$ between $D\varphi(v_i)$ and $v_{\sigma(i)}$ for each $i \in \{1,\ldots,p\}$, where $D\varphi$ denotes the derivative of $\varphi$.
\end{defn}

\begin{notation}
Write $BG$ for the classifying space of a topological group $G$. When $G$ is discrete the homology of $BG$ agrees with the group homology of $G$, so we shall write $H_*(BG) = H_*(G)$.
\end{notation}

\begin{construction}
Let $S$ be a surface with no punctures and $P \subset S$ a finite subset of size $p = \lvert P \rvert$. We shall use \eqref{eq:map-to-wreath-product} to construct a map
\begin{equation}
\label{eq:BT-map}
B\Map_f(S \smallsetminus P) \longrightarrow B(\bS^1 \wr \mathfrak{S}_p).
\end{equation}
Let $\Sigma \subset S$ be a compact subsurface containing $P$ in its interior. We then have maps
\begin{equation}
\label{eq:zig-zag}
\Map(\Sigma \smallsetminus P) \cong \pi_0(\Diff(\Sigma,P)) \longtwoheadleftarrow \Diff(\Sigma,P) \longrightarrow \bS^1 \wr \mathfrak{S}_p,
\end{equation}
where the right-hand map is \eqref{eq:map-to-wreath-product}. These are all compatible with the maps induced by inclusions of subsurfaces, so there are induced maps of colimits
\begin{equation}
\label{eq:zig-zag-colimit}
\underset{\Sigma}{\mathrm{colim}}(\Map(\Sigma \smallsetminus P)) \cong \underset{\Sigma}{\mathrm{colim}}(\pi_0(\Diff(\Sigma,P))) \longtwoheadleftarrow \underset{\Sigma}{\mathrm{colim}}(\Diff(\Sigma,P)) \longrightarrow \bS^1 \wr \mathfrak{S}_p,
\end{equation}
where each colimit is taken over the poset of all compact subsurfaces $\Sigma \subset S$ with $P \subset \mathring{\Sigma}$. Since $S$ has no punctures, the subsurfaces $\Sigma \smallsetminus P \subset S \smallsetminus P$ form a cofinal family in $\fF(S \smallsetminus P)$, so the left-hand group in \eqref{eq:zig-zag-colimit} may be identified with $\Map_f(S \smallsetminus P)$, by Lemma \ref{lem:colimits}.

The middle map in \eqref{eq:zig-zag} is a weak homotopy equivalence by \cite{EarleEells1969,EarleSchatz1970} and hence so is the middle map in \eqref{eq:zig-zag-colimit}. Taking classifying spaces and inverting this map, we obtain the desired map \eqref{eq:BT-map}.
\end{construction}

\begin{rem}
By construction, restricting \eqref{eq:BT-map} to (the classifying space of) $\Map_c(S \smallsetminus P)$, we obtain a commutative square:
\begin{equation}
\label{eq:BT-map-restriction}
\begin{tikzcd}
B\Map_c(S \smallsetminus P) \ar[rr] \ar[d] && B(\bS^1)^p \ar[d] \\
B\Map_f(S \smallsetminus P) \ar[rr] && B(\bS^1 \wr \mathfrak{S}_p).
\end{tikzcd}
\end{equation}
\end{rem}

The key ingredient to prove Proposition \ref{prop:finite-p-part-2} is the following corollary of the main result of \cite{BoedigheimerTillmann2001}.

\begin{thm}
\label{thm:BT}
If $S$ is a connected, orientable surface of infinite genus with no punctures and $P \subset S$ is a finite subset, then the maps
\begin{equation}
\label{eq:BT-map-on-homology}
H_*(\Map_f(S \smallsetminus P)) \longrightarrow H_*(B(\bS^1 \wr \mathfrak{S}_p)) \qquad\text{and}\qquad H_*(\Map_c(S \smallsetminus P)) \longrightarrow H_*(B(\bS^1)^p)
\end{equation}
induced on homology by \eqref{eq:BT-map-restriction} each admit a section. Moreover, these sections are compatible in the sense that if we consider the commutative square induced on homology by \eqref{eq:BT-map-restriction} and replace its horizontal maps with these two sections, then the result is still a commutative square.
\end{thm}
\begin{proof}
In fact, \cite{BoedigheimerTillmann2001} makes a stronger statement. Let us write $(-)^+$ for the \emph{Quillen plus-construction} of a topological space (see \cite[\S 2]{BoedigheimerTillmann2001} for a brief summary of some key properties of the plus-construction and for example \cite{Berrick1982} for further details). According to \cite[Theorem~1.1~(2)]{BoedigheimerTillmann2001}, the space $B\Map_f(S \smallsetminus P)^+$ splits, up to homotopy equivalence, as the product of $B\Gamma_{\infty}^+$ and $B(\bS^1 \wr \mathfrak{S}_p)^+$, where $\Gamma_\infty$ denotes the colimit of $\Map(\Sigma_{g,1})$ as $g\to\infty$, and (the plus-construction of) the map \eqref{eq:BT-map} is the projection onto the second factor of this decomposition. It therefore admits a section up to homotopy, so the result follows upon taking homology since $(-)^+$ does not change the homology of a space. In \cite{BoedigheimerTillmann2001}, this result is stated for the particular surface $S = \mathrm{colim}_{g\to\infty}(\Sigma_{g,1})$, but the homology $H_*(\Map_f(S \smallsetminus P))$ is the same for any surface $S$ satisfying the hypotheses of the theorem, by \cite{Har85}; alternatively, the proof of \cite{BoedigheimerTillmann2001} goes through for any such surface $S$, by taking the colimit of an appropriate diagram of stabilisation maps, corresponding to a filtration of $S$ by compact subsurfaces.

This deals with the left-hand map of \eqref{eq:BT-map-on-homology}. Restricting to $\Map_c(S \smallsetminus P) \subseteq \Map_f(S \smallsetminus P)$ corresponds to restricting to the \emph{pure} mapping class group for each finite-type subsurface over which we are taking the colimit (see Lemma \ref{lem:colimits}). Hence the exact same argument also proves that the right-hand map of \eqref{eq:BT-map-on-homology} admits a section, using \cite[Theorem~1.1~(1)]{BoedigheimerTillmann2001} instead of \cite[Theorem~1.1~(2)]{BoedigheimerTillmann2001}.

Finally, to see that these two sections are compatible in the sense described, we first note that the homotopy splittings \cite[Theorem~1.1~(1)]{BoedigheimerTillmann2001} and \cite[Theorem~1.1~(2)]{BoedigheimerTillmann2001} are both special cases of the homotopy splitting \cite[Theorem~3.1]{BoedigheimerTillmann2001}. The latter depends on a choice of subgroup of the symmetric group $\mathfrak{S}_k$ (where $k$ denotes the cardinality of the finite set $P$); then \cite[Theorem~1.1~(1)]{BoedigheimerTillmann2001} and \cite[Theorem~1.1~(2)]{BoedigheimerTillmann2001} correspond to the trivial subgroup $\{1\}$ and the whole group $\mathfrak{S}_k$ respectively. Now it is clear from the proof of \cite[\S 3]{BoedigheimerTillmann2001} that the splitting of \cite[Theorem~3.1]{BoedigheimerTillmann2001} is natural with respect to the lattice of subgroups of $\mathfrak{S}_k$. In particular, naturality with respect to $\{1\} \subset \mathfrak{S}_k$ implies the desired compatibility statement.
\end{proof}

To complete the proof of Proposition \ref{prop:finite-p-part-2} we need one further ingredient.

\begin{prop}
\label{prop:extension}
If $S$ is a connected, orientable surface with no punctures and $P \subset S$ is a finite subset, then the map \eqref{eq:BT-map} extends along $B(\mathrm{incl}) \colon B\Map_f(S \smallsetminus P) \to B\Map(S \smallsetminus P)$.
\end{prop}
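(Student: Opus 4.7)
My plan is to extend the map \eqref{eq:BT-map} to $B\Map(S \smallsetminus P)$ by applying the B\"odigheimer--Tillmann construction directly to the diffeomorphism group of the ambient surface $S$, rather than only to diffeomorphism groups of its compact subsurfaces. The first ingredient is the identification $\Map(S \smallsetminus P) \cong \Map(S,P) := \pi_0(\Homeo(S,P))$, where $\Homeo(S,P) \subseteq \Homeo(S)$ denotes self-homeomorphisms of $S$ preserving $P$ setwise. Indeed, since $S$ has infinite genus and no punctures by hypothesis, every end of $S$ is non-planar; when we view $P$ as $p$ isolated planar ends of $S \smallsetminus P$, it therefore forms a topologically distinguished finite subset of $\Ends(S \smallsetminus P)$ (in the sense of Definition \ref{def:topologically-distinguished-subset}). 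Every self-homeomorphism of $S \smallsetminus P$ consequently preserves $P$ in its induced action on the end space, and extends uniquely to a self-homeomorphism of the Freudenthal compactification $S$ sending $P$ to $P$ setwise.

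The second ingredient is the observation that Definition \ref{def:map-to-wreath-product} applies verbatim to the pair $(S,P)$, yielding a continuous homomorphism $\tau_S \colon \Diff(S,P) \to \bS^1 \wr \mathfrak{S}_p$; moreover, for every compact $\Sigma \subset S$ containing $P$, the inclusion $\Diff(\Sigma,P) \hookrightarrow \Diff(S,P)$ intertwines $\tau_\Sigma$ with $\tau_S$ by construction. The third, and decisive, ingredient is a weak equivalence $B\Diff(S,P) \simeq B\Map(S,P)$. For compact subsurfaces this is the Earle--Eells/Earle--Schatz theorem already used in the construction of \eqref{eq:BT-map}. For the full infinite-type surface $S$, one needs the analogous contractibility of $\Diff_0(S,P)$, which I would extract from the known contractibility of the identity component $\Homeo_0(S)$ for orientable infinite-type surfaces together with the classical fact that $\Diff \hookrightarrow \Homeo$ is a weak equivalence of topological groups for smooth surfaces.

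Assembling these three ingredients gives a zig-zag
\[
B\Map(S \smallsetminus P) \;\cong\; B\Map(S,P) \;\xleftarrow{\;\simeq\;}\; B\Diff(S,P) \;\xrightarrow{B\tau_S}\; B(\bS^1 \wr \mathfrak{S}_p),
\]
and the resulting map restricts to \eqref{eq:BT-map} on $B\Map_f(S \smallsetminus P)$, since the natural map $\mathrm{colim}_\Sigma B\Diff(\Sigma,P) \to B\Diff(S,P)$ is compatible with the tangential data $\tau_\Sigma$ and $\tau_S$ by construction. I expect the main obstacle to be the weak contractibility of $\Diff_0(S,P)$ for the infinite-type surface $S$; this is the only step that requires input beyond a direct adaptation of the constructions already used in the paper, and it must be justified by citing (or proving) appropriate results on the identity component of the homeomorphism group of an infinite-type surface.
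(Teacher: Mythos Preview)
Your approach is essentially the same as the paper's: apply $\tau$ directly to $\Diff(S,P)$, identify $\Map(S\smallsetminus P)\cong\pi_0(\Homeo(S,P))$ via the Freudenthal compactification, and invoke contractibility of the identity component of the homeomorphism group for non-compact surfaces. The paper cites Yagasaki \cite{Yagasaki2000} for this last fact (extending Hamstrom's result to the non-compact case), which is exactly the input you anticipate needing.

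There is, however, one genuine subtlety you have not addressed and which the paper does handle. Contractibility of the path-components of $\Homeo(S\smallsetminus P)$ yields a weak equivalence $\Homeo(S\smallsetminus P)\to\pi_0(\Homeo(S\smallsetminus P))$ only when the target carries the \emph{quotient} topology; for infinite-type $S$ this topology is not discrete, so this does not yet give $B\Diff(S,P)\simeq B\Map(S\smallsetminus P)^\delta$. The paper closes this gap by observing that $\Map(S\smallsetminus P)$ with the quotient topology is totally disconnected (indeed homeomorphic to the Baire space $\bN^{\bN}$ by \cite[Thm~4.2]{AV20}), so the identity map $\Map(S\smallsetminus P)^\delta\to\Map(S\smallsetminus P)$ is itself a weak equivalence, completing the zig-zag.

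Two minor corrections: the proposition does not assume infinite genus (the hypothesis ``no punctures'' already ensures that the points of $P$ are the unique isolated planar ends of $S\smallsetminus P$, which is all that is needed for the identification $\Homeo(S,P)\cong\Homeo(S\smallsetminus P)$); and the group whose identity component you need to be contractible is $\Homeo(S,P)\cong\Homeo(S\smallsetminus P)$, not $\Homeo(S)$.
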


Before proving this, we first explain how (together with Theorem \ref{thm:BT}) it implies Proposition \ref{prop:finite-p-part-2}, and hence Theorem \ref{mainthm-infinite-genus}\ref{mainthm-infinite-genus-3}.

\begin{proof}[Proof of Proposition \ref{prop:finite-p-part-2}]
By Theorem \ref{thm:BT} and Proposition \ref{prop:extension}, we have a commutative diagram
\[
\begin{tikzcd}[column sep=small]
H_*(B\bS^1) \ar[r] & H_*(B(\bS^1)^p) \ar[r] \ar[d] & H_*(\Map_c(S \smallsetminus P)) \ar[rr] \ar[d] && H_*(B(\bS^1)^p) \ar[d] \\
& H_*(B(\bS^1 \wr \mathfrak{S}_p)) \ar[r] & H_*(\Map_f(S \smallsetminus P)) \ar[rr] \ar[dr] && H_*(B(\bS^1 \wr \mathfrak{S}_p)) \ar[d] \\
&&& H_*(\Map(S \smallsetminus P)) \ar[ur] & H_*(B\bS^1)
\end{tikzcd}
\]
in which the horizontal compositions $H_*(B(\bS^1)^p) \to H_*(B(\bS^1)^p)$ and $H_*(B(\bS^1 \wr \mathfrak{S}_p)) \to H_*(B(\bS^1 \wr \mathfrak{S}_p))$ are identities. The top-left horizontal map is induced by the homomorphism $\bS^1 \to (\bS^1)^p$ sending $t \mapsto (t,0,\ldots,0)$ and the bottom-right vertical map is induced by the homomorphism $\bS^1 \wr \mathfrak{S}_p \to \bS^1$ sending $(t_1,\ldots,t_p;\sigma) \mapsto t_1 + \cdots + t_p$. By construction, the map from the top-left to the bottom-right of the diagram is the identity map. Thus we have factored the identity map of $H_*(B\bS^1) = H_*(\bC\bP^\infty)$ through the map \eqref{eq:inclusion-f}. It follows that the image of \eqref{eq:inclusion-f} contains a direct summand isomorphic to $H_*(\bC\bP^\infty)$, which is a copy of $\bZ$ in every even degree.
\end{proof}

\begin{proof}[Proof of Proposition \ref{prop:extension}]
At the level of diffeomorphism groups, the construction clearly extends to a well-defined continuous homomorphism $\Diff(S,P) \to \bS^1 \wr \mathfrak{S}_p$. Indeed, Definition \ref{def:map-to-wreath-product} does not make any compactness assumptions. The only subtlety lies in descending this homomorphism to the mapping class group.

We first recall the construction of the map \eqref{eq:BT-map} in more detail. In the diagram in Figure \ref{fig:BT-diagram}, $S$ and $P \subset S$ are as in Proposition \ref{prop:extension} and $\Sigma \subset S$ is any compact subsurface containing $P$ in its interior. The natural map from the diffeomorphism group to the homeomorphism group of any smooth surface is a weak equivalence (in particular an isomorphism on $\pi_0$)\footnote{This follows essentially from smoothing theory \cite[Essay~V]{KirbySiebenmann}. See \cite[Appendix~A]{PalmerWu2024} for a brief explanation, which emphasises that the underlying surface does not have to be compact.} and the restriction map $\Homeo(S,P) \to \Homeo(S \smallsetminus P)$ is an isomorphism of topological groups when $S$ has no punctures, since one may define an inverse by extending homeomorphisms uniquely to the Freudenthal compactification of $S \smallsetminus P$ and then discarding all ends that are not punctures (these are preserved by any homeomorphism). Thus all of the vertical maps in Figure \ref{fig:BT-diagram} are either weak equivalences or isomorphisms. The horizontal map across the top is the homomorphism \eqref{eq:map-to-wreath-product} of Definition \ref{def:map-to-wreath-product}. To identify its domain with $\Map(\Sigma \smallsetminus P) = \pi_0(\Homeo(\Sigma \smallsetminus P))$, we need to know that the diagonal maps on the left-hand side are also weak equivalences, which follows either from \cite{EarleEells1969,EarleSchatz1970} at the level of diffeomorphism groups or from \cite{Hamstrom1966} at the level of homeomorphism groups. Taking the colimit over all $\Sigma$, and then taking classifying spaces, we obtain the map \eqref{eq:BT-map}.

To see that \eqref{eq:BT-map} extends to $B\Map(S \smallsetminus P)$, we need to know that the diagonal maps on the right-hand side are also weak equivalences. This follows from the main result of \cite{Yagasaki2000}, which extends \cite{Hamstrom1966} to non-compact surfaces, proving that $\Homeo(S \smallsetminus P)$ has contractible components (and thus contractible path-components), so the projection $\Homeo(S \smallsetminus P) \to \pi_0(\Homeo(S \smallsetminus P))$ is a weak equivalence. There is a small additional subtlety: for this projection map to make sense, one has to equip $\Map(S \smallsetminus P) = \pi_0(\Homeo(S \smallsetminus P))$ with the quotient topology induced by the compact-open topology, whereas we are interested in it as an abstract group, equivalently equipped with the discrete topology. Let us temporarily take the convention that $\Map(S \smallsetminus P)$ denotes the mapping class group with the quotient topology and $\Map(S \smallsetminus P)^\delta$ denotes the same group with the discrete topology. Since $\Map(S \smallsetminus P)$ is totally disconnected (in fact it is homeomorphic to the Baire space $\bN^\bN$ \cite[Thm~4.2]{AV20}), the map $\Map(S \smallsetminus P)^\delta \to \Map(S \smallsetminus P)$ given by the identity of the underlying groups is a weak equivalence. Together, this implies that the map \eqref{eq:BT-map} extends from $B\Map_f(S \smallsetminus P) = B\underset{\Sigma}{\mathrm{colim}}(\Map(\Sigma \smallsetminus P))$ to $B\Map(S \smallsetminus P)^\delta$.
\end{proof}

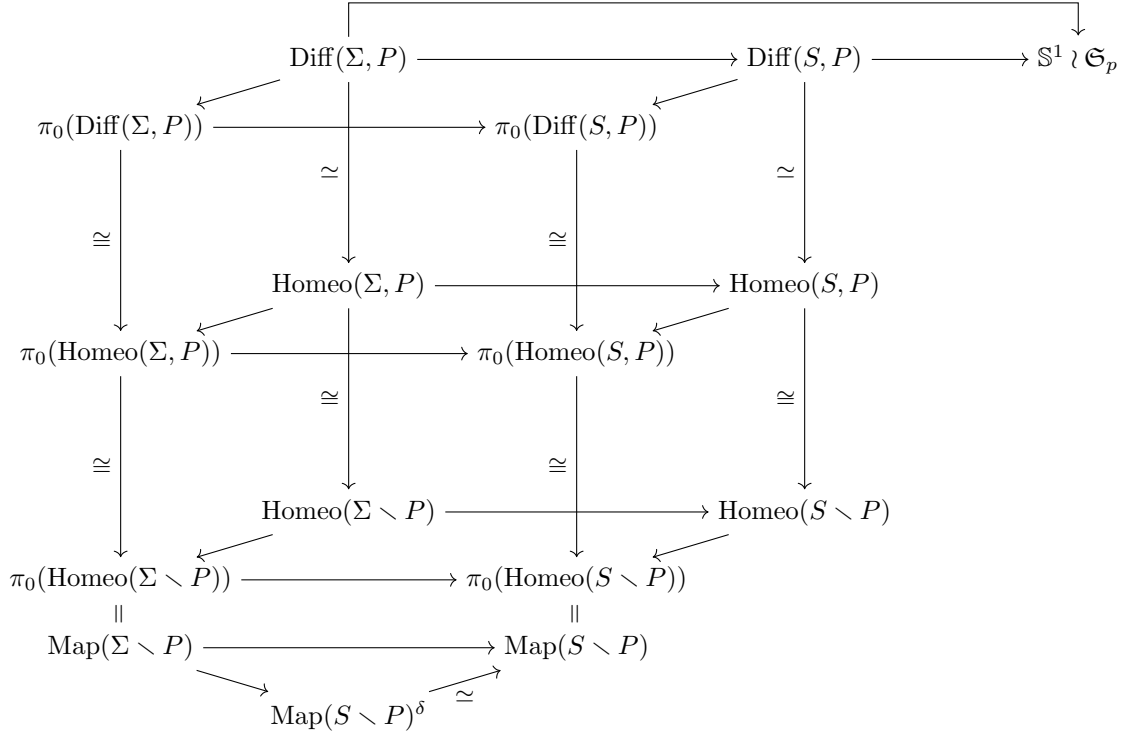
\begin{figure}[t]
\centering
\begin{equation*}
\begin{split}
\begin{tikzpicture}
[x=1.2mm,y=1.5mm]
\node (z) at (80,40) {$\bS^1 \wr \mathfrak{S}_p$};
\node (tl) at (0,40) {$\Diff(\Sigma,P)$};
\node (tr) at (50,40) {$\Diff(S,P)$};
\node (ml) at (0,20) {$\Homeo(\Sigma,P)$};
\node (mr) at (50,20) {$\Homeo(S,P)$};
\node (bl) at (0,0) {$\Homeo(\Sigma \smallsetminus P)$};
\node (br) at (50,0) {$\Homeo(S \smallsetminus P)$};
\node (tl0) at (-25,34) {$\pi_0(\Diff(\Sigma,P))$};
\node (tr0) at (25,34) {$\pi_0(\Diff(S,P))$};
\node (ml0) at (-25,14) {$\pi_0(\Homeo(\Sigma,P))$};
\node (mr0) at (25,14) {$\pi_0(\Homeo(S,P))$};
\node (bl0) at (-25,-6) {$\pi_0(\Homeo(\Sigma \smallsetminus P))$};
\node (br0) at (25,-6) {$\pi_0(\Homeo(S \smallsetminus P))$};
\node (blM) at (-25,-12) {$\Map(\Sigma \smallsetminus P)$};
\node (bmM) at (0,-18) {$\Map(S \smallsetminus P)^\delta$};
\node (brM) at (25,-12) {$\Map(S \smallsetminus P)$};
\draw[->] (tl) -- (0,45) -- (80,45) -- (z);
\draw[->] (tr) to (z);
\draw[->] (tl) to (tr);
\draw[->] (ml) to (mr);
\draw[->] (bl) to (br);
\draw[->] (tl0) to (tr0);
\draw[->] (ml0) to (mr0);
\draw[->] (bl0) to (br0);
\draw[->] (tl) to node[left,font=\small]{$\simeq$} (ml);
\draw[->] (ml) to node[left,font=\small]{$\cong$} (bl);
\draw[->] (tr) to node[left,font=\small]{$\simeq$} (mr);
\draw[->] (mr) to node[left,font=\small]{$\cong$} (br);
\draw[->] (tl0) to node[left,font=\small]{$\cong$} (ml0);
\draw[->] (ml0) to node[left,font=\small]{$\cong$} (bl0);
\draw[->] (tr0) to node[left,font=\small]{$\cong$} (mr0);
\draw[->] (mr0) to node[left,font=\small]{$\cong$} (br0);
\draw[->] (tl) to (tl0);
\draw[->] (tr) to (tr0);
\draw[->] (ml) to (ml0);
\draw[->] (mr) to (mr0);
\draw[->] (bl) to (bl0);
\draw[->] (br) to (br0);
\draw[->] (blM) to (brM);
\draw[->] (blM) to (bmM);
\draw[->] (bmM) to node[below,font=\small]{$\simeq$} (brM);
\node at (-25,-9) {\rotatebox{90}{$=$}};
\node at (25,-9) {\rotatebox{90}{$=$}};
\end{tikzpicture}
\end{split}
\end{equation*}
\caption{The diagram used to construct the extension of \eqref{eq:BT-map} to $B\Map(S \smallsetminus P)$ in the proof of Proposition \ref{prop:extension}. The surface $S$ is connected, orientable and has no punctures (in other words, all of its ends are either non-planar or non-isolated), $P \subset S$ is a finite subset and $\Sigma \subset S$ is a compact subsurface containing $P$ in its interior.}
\label{fig:BT-diagram}
\end{figure}

\bibliographystyle{alpha}
\bibliography{references.bib}

\end{document}